\DeclareMathAlphabet{\mathpzc}{OT1}{pzc}{m}{it}
\DeclareMathOperator{\supp}{supp}
\newcommand{\dist}{ {\textup{\textsf{d}}}_}
\newcommand{\Sides}{\mathscr{S}}
\newcommand{\Ne}{\mathcal{N}}
\newcommand{\E}{\mathscr{E}}
\newcommand{\Res}{\mathscr{R}}
\newcommand{\Jump}{\mathscr{J}}
\newcommand{\TheTitle}{The stationary Boussinesq problem under singular forcing}
\newcommand{\ShortTitle}{A Boussinesq problem}
\newcommand{\TheAuthors}{A. Allendes, E.~Ot\'arola, A. J.~Salgado}
\headers{\ShortTitle}{\TheAuthors}
\title{{\TheTitle}\thanks{AA has been partially supported by CONICYT through FONDECYT project 1170579. EO has been partially supported by CONICYT through FONDECYT project 11180193. AJS has been partially supported by NSF grant DMS-1720213.}}
\author{
Alejandro Allendes\thanks{Departamento de Matem\'atica, Universidad T\'ecnica Federico Santa Mar\'ia, Valpara\'iso, Chile.
    (\email{alejandro.allendes@usm.cl}, \url{http://aallendes.mat.utfsm.cl/}).}
    \and
  Enrique Ot\'arola\thanks{Departamento de Matem\'atica, Universidad T\'ecnica Federico Santa Mar\'ia, Valpara\'iso, Chile.
    (\email{enrique.otarola@usm.cl}, \url{http://eotarola.mat.utfsm.cl/}).}
  \and
  Abner J.~Salgado\thanks{Department of Mathematics, University of Tennessee, Knoxville, TN 37996, USA.
    (\email{asalgad1@utk.edu}, \url{http://www.math.utk.edu/\string~abnersg})}
}
\date{Draft version of \today.}
\begin{document}

\maketitle

\begin{abstract}
In Lipschitz two and three dimensional domains, we study the existence for the so--called Boussinesq model of thermally driven convection under singular forcing. By singular we mean that the heat source is allowed to belong to $H^{-1}(\varpi,\Omega)$, where $\varpi$ is a weight in the Muckenhoupt class $A_2$ that is regular near the boundary. We propose a finite element scheme and, under the assumption that the domain is convex and $\varpi^{-1} \in A_1$, show its convergence. In the case that the thermal diffusion and viscosity are constants, we propose an a posteriori error estimator and show its reliability and local efficiency.
\end{abstract}

\begin{keywords}
Boussinesq problem, Navier--Stokes equations, singular sources, Muckenhoupt weights, weighted estimates, finite element approximation, a posteriori error estimates.
\end{keywords}

% REQUIRED
\begin{AMS}
35Q35,         % PDEs in connection with fluid mechanics
35Q30,         % Navier-Stokes equations 
35R06,         % Partial differential equations with measure
76Dxx,         % Incompressible viscous fluids
65N15,         % Error bounds
65N30,         % Finite elements, Rayleigh-Ritz and Galerkin methods, finite methods
65N50.         % Mesh generation and refinement
\end{AMS}

\section{Introduction}
\label{sec:intro}

The purpose of this work is to study existence, uniqueness, and approximation results for the so--called Boussinesq model of thermally driven convection. While this problem has been considered before in different contexts and there are such results already available in the literature \cite{MR4053090,MR1364404,MR3454217,farhloul2000mixed,MR1370148,MR3232446,MR2378786,MR2378787}, our main source of novelty and originality here is that we allow the heat source to be singular, say a Dirac measure concentrated in a lower dimensional object so that the problem cannot be understood with the usual energy setting; as it was done, for instance, in \cite{MR1364404,farhloul2000mixed,MR1370148,MR2378786,MR2378787}. Let us make this discussion precise. Let $\Omega \subset \R^d$ with $d \in \{2,3\}$ be an open and bounded domain with Lipschitz boundary $\partial \Omega$. We are interested in existence, uniqueness, and approximation of solutions to the following system of partial differential equations (PDEs) in strong form:
\begin{equation}
\label{eq:BousiStrong}
  \begin{dcases}
    - \DIV( \nu(\Te) \GRAD \ue) + (\ue \cdot \GRAD )\ue + \GRAD \pe = \bg \Te , & \text{ in } \Omega, \\
    \DIV \ue = 0, & \text{ in } \Omega, \\
    -\DIV( \kappa(\Te)\GRAD \Te) + \DIV(\ue \Te) = \calH, &\text{ in } \Omega, \\
    \ue = \boldsymbol0, \ \Te = 0, &\text{ on } \partial\Omega.
  \end{dcases}
\end{equation}
The unknowns are the velocity $\ue$, pressure $\pe$, and temperature $\Te$ of the fluid, respectively. The data are the viscosity coefficient $\nu$, gravity $\bg$, 
% an externally applied forcing term $\bef$, 
the thermal diffusivity coefficient $\kappa$, and the externally applied heat source $\calH$. Our main source of interest is the case of a rough $\calH$, so that standard energy arguments do not apply to obtain suitable estimates. We will make precise assumptions quantifying this below.

Our presentation will be organized as follows. We collect background information, and the main assumptions under which we shall operate in section~\ref{sec:prelim}, where we also introduce a notion of solution for \eqref{eq:BousiStrong}; see Definition~\ref{def:solution}. Existence of solutions is presented in section~\ref{sec:existence}. The numerical analysis of problem \eqref{eq:BousiStrong} begins in section~\ref{sec:Discrete}, where we introduce a finite--element--like numerical scheme, show that it always has solutions, and that these converge. Section~\ref{sec:aposteriori} continues the numerical analysis by introducing an a posteriori error estimator for our problem and showing its reliability and local efficiency. Finally, a series of numerical experiments are presented in section~\ref{sec:numerics}. We show the performance of the devised error estimator within an adaptive loop and explore our model beyond what our theory can handle.

\section{Notation and main assumptions}
\label{sec:prelim}

Throughout this work $d \in \{2,3\}$ and $\Omega \subset \R^d$ is an open and bounded domain with Lipschitz boundary $\partial \Omega$. If $\mathcal{W}$ and $\mathcal{Z}$ are Banach function spaces, we write $\mathcal{W} \hookrightarrow \mathcal{Z}$ to denote that $\mathcal{W}$ is continuously embedded in $\mathcal{Z}$. We denote by $\mathcal{W}'$ and $\|\cdot\|_{\mathcal{W}}$ the dual and the norm of $\mathcal{W}$, respectively.

For $E \subset \Omega$ open and $f : E \to \R$, we set
\[
 \fint_E f \diff x  = \frac{1}{|E|}\int_{E} f \diff x, \qquad |E| = \int_E \diff x.
\]

Given $p \in (1,\infty)$, we denote by $p'$ its H\"older conjugate, \ie the real number such that $1/p + 1/p' = 1$. By $a \lesssim b$ we mean $a \leq C b$, with a constant $C$ that neither depends on $a$, $b$, or the discretization parameters. The value of $C$ might change at each occurrence.

\subsection{Weighted function spaces and their embeddings}
\label{sub:embeddings}

A weight is a locally integrable and nonnegative function defined on $\mathbb{R}^d$. If $\varpi$ is a weight and $p \in [1,\infty)$, we say that $\varpi$ belongs to the so--called Muckenhoupt class $A_p$ if \cite{MR1800316,MR0293384,MR1774162}
\begin{equation*}
\begin{aligned}
\label{A_pclass}
\left[ \varpi \right]_{A_p} & := \sup_{B} \left( \fint_{B} \varpi \diff x\right) \left( \fint_{B} \varpi^{1/(1-p)} \diff x \right)^{p-1}  < \infty, \quad p \in (1,\infty),
\\
\left[ \varpi \right]_{A_1} & := \sup_{B} \left( \fint_{B} \varpi \diff x\right)  \sup_{x \in B} \frac{1}{\varpi(x)}< \infty, \quad p =1,
\end{aligned}
\end{equation*}
where the supremum is taken over all balls $B$ in $\R^d$. In addition, $A_{\infty} := \bigcup_{p \geq 1} A_p$.
We call $[\varpi]_{A_p}$, for $p \in [1,\infty)$, the Muckenhoupt characteristic of $\varpi$.

Distances to lower dimensional objects are prototypical examples of Muckenhoupt weights. In particular, if $\calK \subset \Omega$ is a smooth compact submanifold of dimension $k \in \{ 0, 1, \dots, d-1 \}$ then, owing to \cite[Lemma 2.3 item (vi)]{MR1601373}, we have that 
\begin{equation*}
\label{distance_A2}
  \dist{\calK}^\alpha(x) = \textup{dist}(x,\calK)^{\alpha}
\end{equation*}
belongs to the class $A_p$ provided $\alpha \in \left( -(d-k), (d-k)(p-1) \right)$. This allows us to identify three particular cases:

\begin{enumerate}[(i)]
\item Let $d>1$ and $z \in \Omega$, then the weight $\dist{z}^\alpha \in A_2$ if and only if $ \alpha \in (- d , d) $.
\label{item1}
  
\item Let $d \geq 2$ and $\gamma \subset \Omega$ be a smooth closed curve without self intersections. We have that $\dist{\gamma}^\alpha \in A_2$ if and only if $\alpha \in \left( -(d-1) , d-1 \right) $.
\label{item2}

\item Finally, if $d = 3$ and $\Gamma \subset \Omega$ is a smooth surface without boundary, then $\dist{\Gamma}^\alpha \in A_2$ if and only if $\alpha \in (-1,1)$.
\label{item3}

\end{enumerate}
Since the aforementioned lower dimensional objects are strictly contained in $\Omega$, there is a neighborhood of $\partial\Omega$ where the weight has no degeneracies or singularities. In fact, it is continuous and strictly positive. Inspired by \cite[Definition 2.5]{MR1601373}, this observation motivates us to define a restricted class of Muckenhoupt weights. 

\begin{definition}[class $A_p(D)$]
\label{def:ApOmega}
Let $D \subset \R^d$ be a Lipschitz domain. For $p \in (1, \infty)$ we say that $\varpi \in A_p$ belongs to $A_p(D)$ if there is an open set $\mathcal{G} \subset D$, and positive constants $\varepsilon>0$ and $\varpi_l>0$, such that:
\[
\{ x \in \Omega: \mathrm{dist}(x,\partial D)< \varepsilon\} \subset \mathcal{G}, 
\qquad
\varpi \in C(\bar {\mathcal{G}}), 
\qquad
\varpi_l \leq \varpi(x) \quad \forall x \in \bar{\mathcal{G}}.
\]
\end{definition}

\begin{remark}[$\dist{z}^{\alpha} \in A_2(\Omega)$ and $\dist{z}^{-\alpha} \in A_1$]
\label{rem:weightdelta}
Let $z \in \Omega$ and $\alpha \in (d-2,d)$. Define $\dist{z}(x) = |x-z|$. Then, we have that 
the weight $\dist{z}^{\alpha}$ is such that $\dist{z}^{\alpha} \in A_2(\Omega)$ and $\dist{z}^{-\alpha} \in A_1$.
\end{remark}

From the $A_p$--condition and H\"older's inequality follows that an $A_p$--weight satisfies the so--called \emph{strong doubling property} \cite[Proposition 1.2.7]{MR1774162}: Let $\varpi \in A_p$ with $p \in (1,\infty)$ and $E \subset \mathbb{R}^d$ be a measurable subset of a ball $B \subset \mathbb{R}^d$. Then,
\begin{equation}
\label{eq:strong_double_property}
\varpi(B) \leq C_{p,\varpi} \left( \frac{|B|}{|E|} \right)^p \varpi(E).
\end{equation}

The following embedding results will be of importance in our analysis.

\begin{proposition}[weighted embedding]
\label{prop:WeightedEmb}
Let $p \in (1,\infty)$ and $\varpi \in A_p$. There is $\delta>0$ such that if
\[
  k \in \left[ 1, \frac{d}{d-1} + \delta \right],
\]
then $W^{1,p}_0(\varpi,\Omega) \hookrightarrow L^{kp}(\varpi,\Omega)$. If, in addition, $\mathbf{S}_{\mathrm{sing}}(\varpi) \Subset \Omega$ then, the embedding is compact for $1 \leq k \leq d/(d-1)$. Here, $\mathbf{S}_{\mathrm{sing}}(\varpi)$ denotes the set of singularities defined in \cite[Section 4.1]{MR2797702}.
\end{proposition}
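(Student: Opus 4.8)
The plan is to combine a local weighted Poincar\'e--Sobolev inequality for $A_p$ weights with a localization and tightness argument for the compactness statement.

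\textit{Continuous embedding.} First I would invoke the Fabes--Kenig--Serapioni type Sobolev inequality: since $\varpi \in A_p$, there exist $\delta > 0$ and $C > 0$, depending only on $d$, $p$ and $[\varpi]_{A_p}$, such that for every ball $B \subset \R^d$, every $k \in [1, d/(d-1) + \delta]$ and every $v \in C_0^\infty(B)$,
\[
  \left( \fint_B |v|^{kp} \varpi \diff x \right)^{1/(kp)} \leq C \operatorname{diam}(B) \left( \fint_B |\GRAD v|^p \varpi \diff x \right)^{1/p};
\]
see, e.g., \cite{MR1774162}. Fixing a ball $B$ with $\bar\Omega \subset B$ and extending an arbitrary $u \in W^{1,p}_0(\varpi,\Omega)$ by zero to $B$, the extension belongs to $W^{1,p}_0(\varpi,B)$ --- its weighted Sobolev norm over $B$ coincides with that of $u$ over $\Omega$ --- so, by the density of $C_0^\infty(\Omega)$ in $W^{1,p}_0(\varpi,\Omega)$, the inequality passes to $u$. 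This yields $\| u \|_{L^{kp}(\varpi,\Omega)} \lesssim \| \GRAD u \|_{L^p(\varpi,\Omega)} \leq \| u \|_{W^{1,p}(\varpi,\Omega)}$, i.e., the asserted embedding on the whole range $1 \leq k \leq d/(d-1) + \delta$.

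\textit{Compactness.} Assuming $\mathbf{S}_{\mathrm{sing}}(\varpi) \Subset \Omega$, I fix $1 \leq k \leq d/(d-1)$, so that $k < d/(d-1) + \delta$, and a bounded sequence $\{ u_n \}_n \subset W^{1,p}_0(\varpi,\Omega)$. For $\eta > 0$ set $\Omega_\eta = \{ x \in \Omega : \operatorname{dist}(x, \mathbf{S}_{\mathrm{sing}}(\varpi)) < \eta \}$. Recall that $\mathbf{S}_{\mathrm{sing}}(\varpi)$ is closed, that $|\mathbf{S}_{\mathrm{sing}}(\varpi)| = 0$, and that, by \cite[Section 4.1]{MR2797702}, $\varpi$ is bounded above and bounded away from zero on every compact subset of $\R^d \setminus \mathbf{S}_{\mathrm{sing}}(\varpi)$; in particular, on $\Omega \setminus \Omega_\eta$ the weighted $W^{1,p}$ and $L^{kp}$ norms are equivalent to their unweighted counterparts. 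Since $\overline{\Omega \setminus \Omega_\eta}$ is a compact subset of $\R^d \setminus \mathbf{S}_{\mathrm{sing}}(\varpi)$ and $kp < dp/(d-p)$ whenever $p < d$ (and the embedding $W^{1,p}(U) \hookrightarrow L^q(U)$ is compact for every finite $q$ when $p \geq d$), I would exhaust $\R^d \setminus \mathbf{S}_{\mathrm{sing}}(\varpi)$ by smooth bounded subdomains, apply the classical Rellich--Kondrachov theorem on each, and extract --- via a diagonal argument --- a subsequence, not relabeled, that converges in $L^{kp}(\varpi, \Omega \setminus \Omega_\eta)$ for every $\eta > 0$. For the remaining piece, H\"older's inequality with exponents $(d/(d-1) + \delta)/k$ and its conjugate, combined with the embedding already established, produces $\theta > 0$ such that $\| u_n \|_{L^{kp}(\varpi,\Omega_\eta)} \leq \| u_n \|_{L^{(d/(d-1) + \delta)p}(\varpi,\Omega_\eta)}\, \varpi(\Omega_\eta)^{\theta} \lesssim \varpi(\Omega_\eta)^{\theta}$ for all $n$, and $\varpi(\Omega_\eta) \to 0$ as $\eta \downarrow 0$ by dominated convergence. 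Splitting $\Omega = (\Omega \setminus \Omega_\eta) \cup \Omega_\eta$ and combining the two estimates shows that the extracted subsequence is Cauchy, hence convergent, in $L^{kp}(\varpi,\Omega)$.

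\textit{Main difficulty.} The continuous embedding is essentially a citation; the crux is compactness, where the possibly degenerate or singular behavior of $\varpi$ has to be reconciled with a uniform Rellich-type bound. This is exactly what the hypothesis $\mathbf{S}_{\mathrm{sing}}(\varpi) \Subset \Omega$ provides: it isolates a ``good'' region on which $\varpi$ is a harmless multiplicative factor and confines all degeneracy to an arbitrarily small set, whose contribution is then absorbed through the gain in integrability $d/(d-1) + \delta$. The only subtle points are checking that extension by zero preserves membership in the weighted space and invoking the correct structural properties of the singular set.
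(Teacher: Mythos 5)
Your proposal is correct, and it is worth comparing with the paper's proof, which is entirely by citation: continuity is delegated to \cite[Theorem~1.3]{MR643158} (the Fabes--Kenig--Serapioni weighted Sobolev inequality, i.e.\ exactly the inequality you quote) and compactness to \cite[Theorem~4.12]{MR2797702}. Your first paragraph therefore just fleshes out the same citation, including the zero-extension and density details the paper leaves implicit. The real difference is in the compactness part, where you replace the black-box reference by a self-contained localization-plus-tightness argument: Rellich--Kondrachov on compact sets away from $\mathbf{S}_{\mathrm{sing}}(\varpi)$, where the weight is a harmless bounded factor, combined with H\"older and the slack $\delta$ in the integrability exponent to make the contribution of a shrinking neighborhood of the singular set uniformly small. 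The exponent bookkeeping checks out: $kp\le dp/(d-1)<dp/(d-p)$ precisely because $p>1$, and the H\"older exponent $s=(d/(d-1)+\delta)/k$ is strictly larger than $1$ only because you restrict to $k\le d/(d-1)$, which is exactly why compactness is claimed on the smaller range. What your route buys is transparency about where each hypothesis enters; what it costs is one extra ingredient you use tacitly, namely $|\mathbf{S}_{\mathrm{sing}}(\varpi)|=0$, which is needed for $\varpi(\Omega_\eta)\to 0$ but is not among the hypotheses of the proposition as stated. The paper records this fact separately (via \cite[Example~4.4]{MR2797702}) as part of its standing assumptions, so your argument is consistent with the setting of the paper, but a fully self-contained proof should state this as an explicit hypothesis or justify it from the definition of the singular set.
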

\begin{proof}
Reference \cite[Theorem 1.3]{MR643158} guarantees that the embedding is continuous. The compactness of the embedding follows from \cite[Theorem 4.12]{MR2797702}.
\end{proof}

\begin{proposition}[embedding with different metrics]
\label{prop:embeddingtwometrics}
Let $1 \leq p \leq q < \infty$, $\varpi \in A_p$, and $\rho \in A_q$. If the pair $(\rho,\varpi)$ satisfies the compatibility condition
\[
  \frac{r}{R}\left( \frac{\rho(B_r)}{\rho(B_R)} \right)^{1/q} \left( \frac{\varpi(B_R)}{\varpi(B_r)} \right)^{1/p} \lesssim 1, \qquad 0<r \leq R,
\]
then, we have that, $W^{1,p}_0(\varpi,\Omega) \hookrightarrow L^q(\rho,\Omega)$.
\end{proposition}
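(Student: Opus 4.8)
The plan is to deduce the embedding from a pointwise representation of $u$ via the Riesz potential of $\GRAD u$, followed by a two--weight boundedness estimate for that potential. By the very definition of $W^{1,p}_0(\varpi,\Omega)$ the space $C_0^\infty(\Omega)$ is dense in it, and $\|\GRAD u\|_{L^p(\varpi,\Omega)}\le\|u\|_{W^{1,p}_0(\varpi,\Omega)}$; hence it suffices to prove
\[
  \|u\|_{L^q(\rho,\Omega)}\lesssim\|\GRAD u\|_{L^p(\varpi,\Omega)}\qquad\forall\,u\in C_0^\infty(\Omega),
\]
and then pass to the limit (this is legitimate because $\rho>0$ a.e., so $L^q(\rho,\Omega)$--convergence along a subsequence can be matched a.e.\ with the $W^{1,p}_0$--limit).

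\emph{Pointwise estimate.} First I would recall the elementary bound obtained by extending $u$ by zero to $\R^d$ and integrating $\GRAD u$ along the rays issuing from $x$:
\[
  |u(x)|\le c_d\int_{\R^d}\frac{|\GRAD u(y)|}{|x-y|^{d-1}}\diff y = c_d\,I_1\big(|\GRAD u|\big)(x),\qquad x\in\Omega,
\]
where $I_1$ is the Riesz potential of order $1$. Since $\Omega$ is bounded and $\GRAD u$ is supported in $\Omega$, only the scales $|x-y|\lesssim\operatorname{diam}\Omega$ contribute, so the task is reduced to the two--weight estimate
\[
  \|I_1 g\|_{L^q(\rho,\Omega)}\lesssim\|g\|_{L^p(\varpi,\Omega)},\qquad g\ge0,\quad\supp g\subset\bar\Omega .
\]

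\emph{Two--weight bound for $I_1$ --- the crux.} I would discretize the potential: there are finitely many dyadic systems $\mathcal D_1,\dots,\mathcal D_m$ (depending only on $d$) and, for each, a sparse family $\mathcal S_t\subset\mathcal D_t$ of cubes meeting $\Omega$, such that pointwise $I_1 g\lesssim\sum_{t=1}^m\sum_{Q\in\mathcal S_t}\ell(Q)\langle g\rangle_Q\chi_Q$. For off--diagonal sparse operators of this form the two--weight bound $L^p(\varpi,\Omega)\to L^q(\rho,\Omega)$ is governed by the single quantity
\[
  \mathcal A:=\sup_{Q}\ \ell(Q)\Big(\fint_Q\rho\Big)^{1/q}\Big(\fint_Q\varpi^{1-p'}\Big)^{1/p'}
\]
(with the usual modification when $p=1$) whenever one of the weights is Muckenhoupt; here this holds because $\varpi\in A_p$ forces $\varpi^{1-p'}\in A_{p'}$, which is exactly what collapses the relevant testing/Carleson sums to $\mathcal A$. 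Finally, the $A_p$ condition yields $\big(\fint_Q\varpi^{1-p'}\big)^{1/p'}\le[\varpi]_{A_p}^{1/p}\big(\fint_Q\varpi\big)^{-1/p}$, so that, writing $\Phi(B):=\ell(B)\,\rho(B)^{1/q}\varpi(B)^{-1/p}$ and using that cubes and balls are interchangeable here by \eqref{eq:strong_double_property},
\[
  \mathcal A\lesssim\sup_{Q}\Phi(Q)\,|Q|^{1/p-1/q}\lesssim(\operatorname{diam}\Omega)^{d(1/p-1/q)}\sup_{Q\cap\Omega\ne\emptyset}\Phi(Q) .
\]
The compatibility hypothesis says precisely that $\Phi$ is quasi--increasing along concentric balls, $\Phi(B_r)\lesssim\Phi(B_R)$ for $0<r\le R$; since only cubes with $\ell(Q)\lesssim\operatorname{diam}\Omega$ occur, this bounds $\sup_Q\Phi(Q)$ and hence $\mathcal A<\infty$, with the implied constant depending only on $d$, $p$, $q$, $[\varpi]_{A_p}$, $[\rho]_{A_q}$, and the constant implicit in the hypothesis.

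\emph{Main obstacle.} The delicate part is this last step: converting the compatibility condition into the right testing/Carleson condition for the discretized potential while tracking the interplay with the two Muckenhoupt conditions and the bookkeeping of exponents. Two features must be handled with care --- that the potential is effectively truncated (so that quasi--monotonicity of $\Phi$ at small and moderate scales suffices and no hypothesis on $\Phi$ at large scales is used), and that $\varpi^{1-p'}\in A_{p'}$, which is what legitimizes the geometric summation over scales on the ``small cubes'' side. An alternative to the explicit dyadic computation is to invoke a Sawyer--Wheeden--type sufficient condition for the two--weight boundedness of $I_1$: membership of $\varpi$ and $\rho$ in $A_p$ and $A_q$ supplies the required $A_\infty$ ``bumps'' for free, and the compatibility inequality is exactly the residual hypothesis. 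Either way, combining with the reduction above gives $\|u\|_{L^q(\rho,\Omega)}\lesssim\|u\|_{W^{1,p}_0(\varpi,\Omega)}$ on $C_0^\infty(\Omega)$, and density concludes the proof.
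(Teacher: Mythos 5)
The architecture of your argument --- reduce to $C_0^\infty(\Omega)$, dominate $|u|$ pointwise by $I_1(|\GRAD u|)$, then prove a two--weight bound for the (truncated) Riesz potential --- is a legitimate route to this proposition; it is essentially the Chanillo--Wheeden argument, whereas the paper itself only cites \cite[Theorem 6.1]{NOS3}. The problem is at your self-declared crux. The quantity
\[
  \mathcal A=\sup_{Q}\ \ell(Q)\Big(\fint_Q\rho\Big)^{1/q}\Big(\fint_Q\varpi^{1-p'}\Big)^{1/p'}
\]
does \emph{not} govern the two--weight bound $L^p(\varpi)\to L^q(\rho)$ of $I_1$ or of your sparse majorant when $p<q$. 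Testing the sparse form on $g=\sigma\chi_{Q_0}$ with $\sigma=\varpi^{1-p'}$ shows that the necessary joint condition is
\[
  \sup_Q\ \ell(Q)\,|Q|^{1/q-1/p}\Big(\fint_Q\rho\Big)^{1/q}\Big(\fint_Q\sigma\Big)^{1/p'}<\infty,
\]
and the extra factor $|Q|^{1/q-1/p}$ blows up on small cubes when $p<q$, so finiteness of $\mathcal A$ does not even imply the necessary condition. Concretely, for $\varpi=\rho=1$ your $\mathcal A$ is just $\sup_Q\ell(Q)\lesssim\operatorname{diam}\Omega<\infty$ for every $1\le p\le q<\infty$, so the lemma you invoke would yield $W^{1,p}_0(\Omega)\hookrightarrow L^q(\Omega)$ for all $q$, which fails beyond the Sobolev exponent. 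Your own bookkeeping betrays the slip: you pass from $\Phi(Q)$ to $\mathcal A$ by discarding a factor $|Q|^{1/p-1/q}\le(\operatorname{diam}\Omega)^{d(1/p-1/q)}$, and that is precisely the factor you are not allowed to throw away.

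The repair is available and costs nothing: by the same $A_p$ manipulation you already used, $\ell(Q)\,|Q|^{1/q-1/p}\big(\fint_Q\rho\big)^{1/q}\big(\fint_Q\sigma\big)^{1/p'}\le[\varpi]_{A_p}^{1/p}\,\Phi(Q)$ exactly (no diameter factor), and the compatibility hypothesis makes $\Phi$ quasi--increasing, hence bounded over the relevant balls once you observe that $\Phi(B_R)$ is finite for the reference radius $R=\operatorname{diam}\Omega$ (this is where boundedness of $\Omega$ enters). With the correctly normalized joint $A_{p,q}$--type constant in hand, the Sawyer--Wheeden/P\'erez sufficiency result applies, the $A_\infty$ bumps coming for free from $\rho\in A_q$ and $\sigma\in A_{p'}$. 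Two residual caveats: at the endpoint $p=1$ the strategy ``pointwise bound by $I_1$ plus strong--type $(1,q)$ boundedness'' is not available (already $I_1:L^1\to L^{d/(d-1)}$ fails; one needs the truncation/co-area device), and the ``usual modification'' of the joint condition at $p=1$ should be written out, since the proposition as stated includes $p=1$.
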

\begin{proof}
See \cite[Theorem 6.1]{NOS3}.
\end{proof}

\begin{proposition}[boundedness]
\label{prop:bounded}
Let $d \in \{2,3\}$ and $\varpi \in A_2$. For every $\bg \in \bL^\infty(\Omega)$, $\theta \in H^1_0(\varpi,\Omega)$, and $\bv \in \bH^1_0(\Omega)$, we have that
\begin{equation}
\label{eq:boundedness_1}
  \left| \int_\Omega \theta \bg \cdot \bv \diff x \right| \leq C_{e,1} \| \bg \|_{\bL^\infty(\Omega)} \| \GRAD \theta \|_{\bL^2(\varpi,\Omega)} \| \GRAD \bv \|_{\bL^2(\Omega)}.
\end{equation}
If, in addition, the weight satisfies $\varpi^{-1} \in A_1$, then provided $r \in H^1_0(\varpi^{-1},\Omega)$, we have
\begin{equation}
\label{eq:boundedness_2}
  \left| \int_\Omega \theta \bv \cdot\GRAD r \diff x \right| \leq C_{e,2} \| \GRAD \bv \|_{\bL^2(\Omega)} \| \GRAD \theta \|_{\bL^2(\varpi,\Omega)} \|\GRAD r \|_{\bL^2(\varpi^{-1},\Omega)}.
\end{equation}
In both estimates, the constants depend only on $\Omega$ and $\varpi$.
\end{proposition}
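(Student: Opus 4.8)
The plan is to prove \eqref{eq:boundedness_1} and \eqref{eq:boundedness_2} by inserting a suitable weight and its inverse and then applying H\"older's inequality together with the weighted Sobolev embeddings of Propositions~\ref{prop:WeightedEmb} and~\ref{prop:embeddingtwometrics}. For \eqref{eq:boundedness_1}, I would bound the integrand crudely by $\| \bg \|_{\bL^\infty(\Omega)} |\theta| |\bv|$ and then estimate $\int_\Omega |\theta| |\bv| \diff x$. The idea is to split the integrand as $(|\theta| \varpi^{1/2}) \cdot (|\bv| \varpi^{-1/2})$ and apply H\"older with exponents, say, $q$ and $q'$: this gives $\| \theta \|_{L^q(\varpi^{q/2}, \Omega)} \| \bv \|_{L^{q'}(\varpi^{-q'/2},\Omega)}$, but a cleaner route is to use Proposition~\ref{prop:WeightedEmb} with $p=2$ to get $\theta \in L^{2k}(\varpi,\Omega)$ for $k$ slightly above $d/(d-1)$, and the classical (unweighted) Sobolev embedding $\bH^1_0(\Omega) \hookrightarrow \bL^{s}(\Omega)$ for the appropriate $s$; then H\"older with the pair $(2k, (2k)')$ after writing $|\theta||\bv| = (|\theta|\varpi^{1/k'\cdot(1/2)}\cdots)$. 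To avoid this bookkeeping I would instead invoke Proposition~\ref{prop:embeddingtwometrics}: choosing $\rho = \varpi$, $p = q = 2$ makes the compatibility condition trivial, yielding $\bH^1_0(\Omega) \subset \bH^1_0(\mathbf{1},\Omega)$, which is not what we want; the correct pairing is to dominate $\int |\theta||\bv|$ by $\|\GRAD\theta\|_{\bL^2(\varpi,\Omega)}\|\bv\|_{\bL^2(\varpi^{-1},\Omega)}$ via Cauchy--Schwarz with weight $\varpi$, and then to control $\|\bv\|_{\bL^2(\varpi^{-1},\Omega)}$ by $\|\GRAD\bv\|_{\bL^2(\Omega)}$. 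The last embedding $\bH^1_0(\Omega)\hookrightarrow \bL^2(\varpi^{-1},\Omega)$ is exactly an instance of Proposition~\ref{prop:embeddingtwometrics} with $p=2$, $\varpi \equiv 1$, $\rho = \varpi^{-1}$ provided $\varpi^{-1} \in A_q$ for some $q \geq 2$ and the compatibility condition holds; since $\varpi \in A_2$ implies $\varpi^{-1} \in A_2$, and since $d/(d-1) \le 2$ for $d \in \{2,3\}$, this works (one checks the compatibility condition using the strong doubling property \eqref{eq:strong_double_property}).

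For \eqref{eq:boundedness_2}, the integrand is $\theta \, \bv \cdot \GRAD r$. Here I would split into three factors: $|\theta| \varpi^{1/2}$, $|\bv|$, and $|\GRAD r| \varpi^{-1/2}$. Applying H\"older with exponents $(a, b, 2)$ where $1/a + 1/b = 1/2$, the first factor is controlled by $\| \GRAD \theta \|_{\bL^2(\varpi,\Omega)}$ through the embedding $H^1_0(\varpi,\Omega)\hookrightarrow L^a(\varpi,\Omega)$ of Proposition~\ref{prop:WeightedEmb} (with $p=2$, $a = 2k$), the third factor is by definition $\|\GRAD r\|_{\bL^2(\varpi^{-1},\Omega)}$, and the middle factor $\|\bv\|_{\bL^b(\Omega)}$ is controlled by $\|\GRAD\bv\|_{\bL^2(\Omega)}$ via the classical Sobolev embedding $\bH^1_0(\Omega) \hookrightarrow \bL^b(\Omega)$, which in $d\in\{2,3\}$ holds for all $b \in [1, 2d/(d-2))$ (all $b<\infty$ when $d=2$, $b \le 6$ when $d=3$). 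One must therefore check that the arithmetic $1/a+1/b+1/2 = 1$ can be met with $a$ in the admissible range of Proposition~\ref{prop:WeightedEmb}, i.e. $a \le 2d/(d-1) + 2\delta$, and $b$ in the Sobolev range; for $d = 3$ this forces $a$ close to $3$ and $b$ close to $6$, which is exactly at (or, using the $\delta$ room, just inside) the endpoints, so this is where a little care is needed. Here the hypothesis $\varpi^{-1}\in A_1$ is what legitimizes writing $r \in H^1_0(\varpi^{-1},\Omega)$ as a genuine weighted Sobolev space with the expected properties.

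The main obstacle I anticipate is the endpoint nature of the exponents when $d = 3$: the natural H\"older split lands precisely on the borderline Sobolev exponent $6$ for $\bv$ and on $d/(d-1) = 3/2$ (so $a = 2k = 3$) for $\theta$, so one genuinely needs the extra integrability $\delta > 0$ furnished by Proposition~\ref{prop:WeightedEmb} (the ``improved'' weighted Sobolev embedding) to have strict inequalities and hence a bona fide bounded trilinear form; alternatively one trades a bit of room between the two factors. A secondary technical point is verifying the compatibility condition of Proposition~\ref{prop:embeddingtwometrics} for the pairs $(\varpi^{-1}, \mathbf 1)$ and $(\varpi, \mathbf 1)$ that appear; this is where the strong doubling property \eqref{eq:strong_double_property} enters, bounding $\varpi(B_R)/\varpi(B_r)$ and its reciprocal by powers of $R/r$, which combined with the factor $r/R$ gives the required decay. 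Once these embeddings are in hand, both estimates follow by a direct application of H\"older's inequality, and tracking constants shows they depend only on $\Omega$ and $\varpi$ (through $[\varpi]_{A_2}$ and the embedding constants), as claimed.
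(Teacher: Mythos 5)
Your route for \eqref{eq:boundedness_1} has a genuine gap. After discarding several options you settle on the weighted Cauchy--Schwarz split $\int|\theta||\bv| \le \|\theta\|_{L^2(\varpi,\Omega)}\|\bv\|_{\bL^2(\varpi^{-1},\Omega)}$ followed by the embedding $\bH^1_0(\Omega)\hookrightarrow \bL^2(\varpi^{-1},\Omega)$. That embedding is \emph{false} for a general $\varpi\in A_2$ when $d=3$: take $\varpi=\dist{z}^{\alpha}$ with $\alpha\in(2,3)$, which is in $A_2$ by item (\ref{item1}) of section~\ref{sub:embeddings}; the compatibility condition of Proposition~\ref{prop:embeddingtwometrics} for the pair $(\dist{z}^{-\alpha},1)$ evaluates (on balls centered at $z$) to $(r/R)^{1-\alpha/2}$, which blows up as $r\to 0$, and a direct cutoff computation shows $\|\bv\|_{\bL^2(\dist{z}^{-\alpha},\Omega)}\not\lesssim\|\GRAD\bv\|_{\bL^2(\Omega)}$ (this is the sharpness of Hardy's inequality). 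The strong doubling property \eqref{eq:strong_double_property} cannot rescue this: it bounds the ratio $\varpi^{-1}(B_R)/\varpi^{-1}(B_r)$ from above, which is the wrong direction for this pair. Note also that \eqref{eq:boundedness_1} is asserted under $\varpi\in A_2$ alone, so any argument that routes through $\varpi^{-1}$ is structurally suspect. The paper instead verifies the compatibility condition for the pair $(1,\varpi)$ --- where strong doubling \emph{does} give the needed bound --- to obtain $H^1_0(\varpi,\Omega)\hookrightarrow L^{d/(d-1)}(\Omega)$, and then pairs $\theta\in L^{d/(d-1)}(\Omega)$ with $\bv\in\bL^{d}(\Omega)$ by unweighted H\"older and the classical Sobolev embedding. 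Your argument does work for $d=2$, but not for $d=3$.

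For \eqref{eq:boundedness_2} the idea is right but the weight bookkeeping does not close. With your split $(|\theta|\varpi^{1/2})\cdot|\bv|\cdot(|\GRAD r|\varpi^{-1/2})$ and exponents $(a,b,2)$, the first factor produces $(\int_\Omega|\theta|^a\varpi^{a/2}\diff x)^{1/a}$, i.e.\ the norm of $\theta$ in $L^a(\varpi^{a/2},\Omega)$, whereas Proposition~\ref{prop:WeightedEmb} controls $\|\theta\|_{L^{a}(\varpi,\Omega)}$ (weight to the first power). These differ unless $a=2$, which forces $b=\infty$. The missing ingredient is precisely the one the hypothesis $\varpi^{-1}\in A_1$ supplies: it implies $\varpi\in L^\infty(\Omega)$, which lets you move the excess power of the weight onto an $L^\infty$ constant. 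The paper distributes the weight as $(|\theta|\varpi^{1/\ell})(|\bv|\varpi^{1/m})(|\GRAD r|\varpi^{-1/2})$ with $1/\ell+1/m=1/2$, bounds $(\int|\bv|^m\varpi)^{1/m}\le\|\varpi\|_{L^\infty(\Omega)}^{1/m}\|\bv\|_{\bL^m(\Omega)}$, and then uses $H^1_0(\varpi,\Omega)\hookrightarrow L^\ell(\varpi,\Omega)$ and $\bH^1_0(\Omega)\hookrightarrow\bL^m(\Omega)$. You instead attribute to $\varpi^{-1}\in A_1$ only the cosmetic role of ``legitimizing'' the space $H^1_0(\varpi^{-1},\Omega)$, which misses its actual function. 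Your endpoint worry is, incidentally, a non-issue: in $d=3$ the choice $a=3$, $b=6$ is admissible since $\bH^1_0(\Omega)\hookrightarrow\bL^6(\Omega)$ holds at the critical exponent and $k=3/2=d/(d-1)$ is within the range of Proposition~\ref{prop:WeightedEmb}; the extra $\delta$ is convenient but not needed for boundedness.
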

\begin{proof}
Since the weight $\varpi \in A_2$, we have that it satisfies the strong doubling property \eqref{eq:strong_double_property} with $p=2$. Thus, for $0<r \leq R$ and $q \leq d/(d-1)$, we have 
\[
 \frac{r}{R}\left( \frac{ |B_r|}{|B_R|} \right)^{1/q} \left( \frac{\varpi(B_R)}{\varpi(B_r)} \right)^{1/2}
 \lesssim \left( \frac{r}{R} \right)^{1+d/q} \frac{ |B_R|}{|B_r|}
  \lesssim 1.
\]
We thus invoke Proposition~\ref{prop:embeddingtwometrics} with $p=2$ and $\rho=1$ to conclude that $H^1_0(\varpi,\Omega) \hookrightarrow L^q(\Omega)$ provided $q \leq d/(d-1)$. Consequently, for $q \leq d/(d-1)$, we have
\[
  \left| \int_\Omega \theta \bg \cdot \bv \diff x \right| \leq \| \bg \|_{\bL^\infty(\Omega)} \| \theta \|_{L^q(\Omega)} \| \bv \|_{\bL^{q'}(\Omega)} \lesssim \| \bg \|_{\bL^\infty(\Omega)} \| \nabla \theta \|_{L^2(\varpi,\Omega)} \| \bv \|_{\bL^{q'}(\Omega)}.
\]
Notice that $q'\geq d$. Choosing $q=d/(d-1)$ and utilizing a standard Sobolev embedding yield estimate \eqref{eq:boundedness_1}.

We now prove inequality \eqref{eq:boundedness_2}. To accomplish this task, we first notice that $\varpi^{-1} \in A_1$ implies $\varpi \in L^\infty(\Omega)$. Second, since Proposition \ref{prop:WeightedEmb} guarantees the existence of $\epsilon > 0$ such that $H_0^1(\varpi,\Omega) \hookrightarrow L^{\mu+ \epsilon}(\varpi,\Omega)$, for $\mu = 2d/(d-1)$, we can conclude the existence of $\ell \geq 2d/(d-1)$ and $m \leq 2d$ such that $m^{-1} + \ell^{-1} = 1/2$ and
\begin{equation}
\label{eq:estimateBu}
  \begin{aligned}
    \left| \int_\Omega \theta \bv \cdot\GRAD r \diff x \right| 
    & 
    \leq \| \varpi \|_{L^\infty(\Omega)}^{1/m} \| \bv \|_{\bL^m(\Omega)} \| \theta \|_{L^{\ell}(\varpi,\Omega)} \| \GRAD r \|_{\bL^2(\varpi^{-1},\Omega)}
    \\
    &
    \lesssim \| \varpi \|_{L^\infty(\Omega)}^{1/m} \| \nabla \bv \|_{\bL^2(\Omega)} \| \nabla \theta \|_{L^{2}(\varpi,\Omega)} \| \GRAD r \|_{\bL^2(\varpi^{-1},\Omega)},
  \end{aligned}
\end{equation}
where we have also used a standard, unweighted, Sobolev embedding to handle the term involving $\bv$. This yields \eqref{eq:boundedness_2} and concludes the proof.
\end{proof}

\subsection{Main assumptions and definition of solution}
\label{sub:assumptions}

Having described the functional setting that we shall adopt and some of its more relevant properties, we can precisely state the assumptions under which we shall operate.
\begin{enumerate}[$\bullet$]
  \item \emph{Domain:} Let $d \in \{2,3\}$. We assume that $\Omega$ is a bounded domain in $\R^d$ with Lipschitz boundary $\partial\Omega$. When dealing with discretization, we shall further assume that $\Omega$ is a polytope.
  
  \item \emph{Gravity:} The gravity is a constant vector $\bg \in \R^d$. We set $g = |\bg|$.
  
  \item \emph{Viscosity:} The viscosity is a function $\nu \in C^{0,1}(\R)$ that is strictly positive and bounded, \ie there are positive constants $\nu_-$ and $\nu_+$ such that $\nu_- \leq \nu_+$ and
  \[
    \nu_- \leq \nu(t) \leq \nu_+ \quad \forall t \in \R.
  \]
  
  \item \emph{Thermal diffusivity:} The thermal coefficient is a function $\kappa \in C^{0,1}(\R)$ that is, moreover, strictly positive and bounded, \ie there are positive constants $\kappa_-$ and $\kappa_+$ such that $\kappa_- \leq \kappa_+$ and
  \begin{equation*}
  \label{eq:defofkminmax}
    \kappa_- \leq \kappa(t) \leq \kappa_+ \quad \forall t \in \R.
  \end{equation*}
  To quantify the oscillation of the thermal diffusivity we shall introduce
  \begin{equation*}
  \label{eq:defoflambda}
    \Lambda(\kappa) := \frac{\kappa_-}{\kappa_+} \in (0,1].
  \end{equation*}
    
  \item \emph{Weight:} We assume that we have a weight $\varpi \in A_2(\Omega)$ such that $\varpi^{-1} \in A_1$ and $\mathbf{S}_{\mathrm{sing}}(\varpi) \Subset \Omega$.  A canonical example of this scenario is given in Remark~\ref{rem:weightdelta}; \cite[Example 4.4]{MR2797702} shows that $|\mathbf{S}_{\mathrm{sing}}(\varpi)| = 0$.
  
  \item \emph{Heat source:} We allow the heat source to be singular, and we quantify this by assuming that it belongs to the dual of a weighted space. Namely, we assume that $\calH \in H^{-1}(\varpi,\Omega):= H_0^1(\varpi^{-1},\Omega)'$.
\end{enumerate}

With these assumptions at hand we can define our notion of solution.

\begin{definition}[weak solution]
\label{def:solution}
We say that the triple $(\ue,\pe,\Te) \in \bH^1_0(\Omega) \times L^2_0(\Omega) \times H^1_0(\varpi,\Omega)$ is a weak solution to \eqref{eq:BousiStrong} if
\begin{equation}
\label{eq:BusiWeak}
  \begin{dcases}
    \int_\Omega \left( \nu(\Te) \GRAD \ue : \GRAD \bv + (\ue\cdot \GRAD \ue )\cdot \bv - \pe \DIV \bv - \Te\bg \cdot \bv \right) \diff x = 0, 
    \\
    \int_\Omega q \DIV \ue \diff x = 0,
    \\
    \int_\Omega \left( \kappa(\Te) \GRAD \Te \cdot \GRAD r - \Te\ue\cdot\GRAD r \right) \diff x = \langle \calH, r \rangle,
  \end{dcases}
\end{equation}
for all $\bv \in \bH^1_0(\Omega)$, $ q \in L^2_0(\Omega)$, and $r \in H^1_0(\varpi^{-1},\Omega)$. Here, $\langle \cdot, \cdot \rangle$ denotes the duality pairing between $H^1_0(\varpi^{-1},\Omega)$ and its dual $H^{-1}(\varpi,\Omega)$.
\end{definition}

We immediately comment that, owing to our assumptions on data and definition of solution, all terms in this definition are meaningful; see Proposition~\ref{prop:bounded}.

\section{Existence of solutions}
\label{sec:existence}

The main goal in this section is to show that problem \eqref{eq:BousiStrong} has, under the assumptions stated in Section~\ref{sub:assumptions}, a solution in the sense of Definition~\ref{def:solution}. We proceed in several steps.

\subsection{The Navier--Stokes equation with prescribed temperature}
\label{sub:NSE}

We begin by making a simple observation. Given $\theta \in H^1_0(\varpi,\Omega)$, let us consider the following problem: Find $(\bu,p) \in \bH^1_0(\Omega) \times L^2_0(\Omega)$ such that
\begin{equation}
\label{eq:NSEsimp}
  \begin{dcases}
    \int_\Omega \left( \nu(\theta) \GRAD \bu : \GRAD \bv + (\bu\cdot \GRAD \bu )\cdot \bv - p \DIV \bv \right) \diff x = \int_\Omega \theta\bg \cdot \bv  \diff x & \forall \bv \in \bH^1_0(\Omega), 
    \\
    \int_\Omega q \DIV \bu \diff x = 0 & \forall q \in L^2_0(\Omega).
  \end{dcases}
\end{equation}ll the

\begin{theorem}[existence and uniqueness]
\label{thm:existNSEsimp}
For every $\theta \in H^1_0(\varpi,\Omega)$ problem \eqref{eq:NSEsimp} has at least one solution. In addition, if
$
  C_P C_{e,1} g \| \GRAD \theta \|_{\bL^2(\varpi,\Omega)}  < \nu_-^2,
$
where $C_P$ is a constant that depends only on $\Omega$ and $d$, then this solution is unique and satisfies
\[
  \| \GRAD \bu \|_{\bL^2(\Omega)} \leq \frac{C_{e,1}g}{\nu_-} \| \GRAD \theta \|_{\bL^2(\varpi,\Omega)} .
\]
\end{theorem}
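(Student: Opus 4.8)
This is a classical Navier–Stokes-type existence result with a twist: the forcing term $\theta\bg$ lives in a weighted setting, so I need to make sure it defines a bounded functional on $\bH^1_0(\Omega)$. That's exactly what Proposition~\ref{prop:bounded}, estimate \eqref{eq:boundedness_1}, provides: $|\int_\Omega \theta\bg\cdot\bv\,\diff x| \leq C_{e,1}g\|\GRAD\theta\|_{\bL^2(\varpi,\Omega)}\|\GRAD\bv\|_{\bL^2(\Omega)}$. So the right-hand side is an element of $\bH^{-1}(\Omega)$ with norm bounded by $C_{e,1}g\|\GRAD\theta\|_{\bL^2(\varpi,\Omega)}$.

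Let me outline the steps.

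First, existence. I would pass to the divergence-free subspace $\bV = \{\bv\in\bH^1_0(\Omega): \DIV\bv = 0\}$ and seek $\bu\in\bV$ solving $\int_\Omega(\nu(\theta)\GRAD\bu:\GRAD\bv + (\bu\cdot\GRAD\bu)\cdot\bv)\,\diff x = \int_\Omega\theta\bg\cdot\bv\,\diff x$ for all $\bv\in\bV$; the pressure $p\in L^2_0(\Omega)$ is then recovered via the inf-sup (LBB) condition on $\bH^1_0(\Omega)\times L^2_0(\Omega)$ applied to the functional $\bv\mapsto \int_\Omega\theta\bg\cdot\bv - \nu(\theta)\GRAD\bu:\GRAD\bv - (\bu\cdot\GRAD\bu)\cdot\bv$, which vanishes on $\bV$. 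To produce $\bu$, I would use a Galerkin scheme on a sequence of finite-dimensional subspaces of $\bV$. The a priori bound: test with $\bv = \bu$, use that the trilinear form $c(\bu,\bu,\bu) = \int_\Omega(\bu\cdot\GRAD\bu)\cdot\bu\,\diff x = 0$ (antisymmetry after integration by parts, valid since $\DIV\bu = 0$ and $\bu|_{\partial\Omega}=0$), use $\nu(\theta)\geq\nu_-$, and apply \eqref{eq:boundedness_1} to get $\nu_-\|\GRAD\bu\|_{\bL^2(\Omega)}^2 \leq C_{e,1}g\|\GRAD\theta\|_{\bL^2(\varpi,\Omega)}\|\GRAD\bu\|_{\bL^2(\Omega)}$, hence $\|\GRAD\bu\|_{\bL^2(\Omega)} \leq (C_{e,1}g/\nu_-)\|\GRAD\theta\|_{\bL^2(\varpi,\Omega)}$. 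This uniform bound lets me extract a weakly convergent subsequence in $\bV$; compactness of $\bH^1_0(\Omega)\hookrightarrow\bL^4(\Omega)$ (for $d\le 3$, and using $\nu\in C^{0,1}$ so $\nu(\theta)$ is a fixed $L^\infty$ function here) allows passage to the limit in the nonlinear term via a Brouwer fixed point / standard monotonicity-free argument at the Galerkin level. The continuity of $\nu$ is not an issue since $\theta$ is fixed. The bound displayed in the statement is then automatic since the solution we constructed satisfies it (and, under the smallness condition, all solutions do).

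Second, uniqueness under the smallness condition $C_PC_{e,1}g\|\GRAD\theta\|_{\bL^2(\varpi,\Omega)} < \nu_-^2$. Suppose $(\bu_1,p_1)$ and $(\bu_2,p_2)$ are two solutions; set $\bw = \bu_1 - \bu_2$. Subtracting the equations and testing with $\bv = \bw\in\bV$ kills the pressure and the temperature forcing, leaving $\int_\Omega\nu(\theta)\GRAD\bw:\GRAD\bw\,\diff x + \int_\Omega[(\bu_1\cdot\GRAD\bu_1) - (\bu_2\cdot\GRAD\bu_2)]\cdot\bw\,\diff x = 0$. Rewrite the nonlinear difference as $(\bu_1\cdot\GRAD\bw)\cdot\bw + (\bw\cdot\GRAD\bu_2)\cdot\bw$; the first term vanishes by antisymmetry, so $\nu_-\|\GRAD\bw\|^2_{\bL^2(\Omega)} \leq |\int_\Omega(\bw\cdot\GRAD\bu_2)\cdot\bw\,\diff x| \leq C_P\|\GRAD\bu_2\|_{\bL^2(\Omega)}\|\GRAD\bw\|^2_{\bL^2(\Omega)}$, where $C_P$ bundles the constant from $\|\bw\|_{\bL^4(\Omega)}^2\lesssim\|\GRAD\bw\|_{\bL^2(\Omega)}^2$ together with Hölder. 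Now insert the a priori bound $\|\GRAD\bu_2\|_{\bL^2(\Omega)}\le (C_{e,1}g/\nu_-)\|\GRAD\theta\|_{\bL^2(\varpi,\Omega)}$ to obtain $(\nu_- - C_P C_{e,1}g\|\GRAD\theta\|_{\bL^2(\varpi,\Omega)}/\nu_-)\|\GRAD\bw\|^2_{\bL^2(\Omega)}\le 0$, and the hypothesis makes the prefactor strictly positive, forcing $\bw = \boldsymbol0$; then $p_1 = p_2$ by inf-sup.

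The only genuinely nonstandard ingredient is controlling the forcing $\theta\bg$, and that has already been done for us in Proposition~\ref{prop:bounded}; everything else is the textbook stationary Navier–Stokes argument (Galerkin, antisymmetry of the trilinear form, compact Sobolev embedding for $d\le 3$, LBB for the pressure). The main point requiring a little care is that $\nu$ and $\kappa$ are merely Lipschitz functions of the temperature — but since $\theta$ is \emph{prescribed} here, $\nu(\theta)$ is just a fixed function in $[\nu_-,\nu_+]\subset L^\infty(\Omega)$, so the coercivity and boundedness of the bilinear form $\int_\Omega\nu(\theta)\GRAD\cdot:\GRAD\cdot$ are immediate and no continuity of $\nu$ in the limit is needed. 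I would expect the write-up to simply cite a standard reference (e.g. Girault–Raviart or Temam) for the base Navier–Stokes existence/uniqueness and only spell out the modifications coming from the weighted right-hand side.
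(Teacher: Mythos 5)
Your proposal is correct and follows essentially the same route as the paper: the only nonstandard point is recognizing that $\bv \mapsto \int_\Omega \theta\bg\cdot\bv\,\diff x$ defines an element of $\bH^{-1}(\Omega)$ with norm at most $C_{e,1}g\|\GRAD\theta\|_{\bL^2(\varpi,\Omega)}$ via Proposition~\ref{prop:bounded}, after which the paper simply invokes the classical small-data existence/uniqueness theory for the stationary Navier--Stokes equations (noting that $\nu(\theta)$ is a fixed bounded, measurable, strictly positive coefficient), exactly as you anticipated. Your unpacking of that classical theory (Galerkin, antisymmetry of the trilinear form, energy bound, LBB for the pressure, and the uniqueness estimate yielding the condition $C_PC_{e,1}g\|\GRAD\theta\|_{\bL^2(\varpi,\Omega)}<\nu_-^2$) is accurate but is left to the cited reference in the paper.
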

\begin{proof}
Since $\theta \in H^1_0(\varpi,\Omega)$, the function $\bar\nu(x) := \nu(\theta(x))$ is bounded, measurable, and strictly positive. Define the functional 
\[
\calF_\theta: \bv \mapsto \int_\Omega \theta \bg \cdot \bv \diff x.
\]
Owing to Proposition~\ref{prop:bounded} we have that $\calF_\theta \in \bH^{-1}(\Omega)$. In addition, Proposition~\ref{prop:bounded} also shows that
\[
  \| \calF_\theta \|_{\bH^{-1}(\Omega)} \leq 
%  C_{e,1} g \| \GRAD \theta \|_{\bL^2(\varpi)}.
C_{e,1}  \| g\|_{\bL^{\infty}(\Omega)} \| \GRAD \theta \|_{\bL^2(\varpi,\Omega)} 
=
C_{e,1}  g  \| \GRAD \theta \|_{\bL^2(\varpi,\Omega)}.
\]
Thus, the standard theory of existence and uniqueness under small data (or large viscosity) for the Navier--Stokes equation applies \cite[Chapter II, \S 1]{MR1846644}.
\end{proof}

\subsection{The stationary heat equation with convection}
\label{sub:heat}

Here, we study the existence of solutions to a stationary heat equation with convection and under singular forcing. Namely, given $\varkappa \in L^\infty(\Omega)$ with $0<\varkappa_-\leq \varkappa \leq \varkappa_+$, $\bu \in \bH^1_0(\Omega)$ solenoidal, and $\calH \in H^{-1}(\varpi,\Omega)$, we consider the following stationary heat equation: Find $T \in H^1_0(\varpi,\Omega)$ such that
\begin{equation}
\label{eq:SimpHeat}
  \int_\Omega \left( \varkappa \GRAD T \cdot \GRAD r - T\bu\cdot\GRAD r \right) \diff x = \langle \calH, r \rangle \quad \forall r \in H^1_0(\varpi^{-1},\Omega).
\end{equation}

As a first step, we state a well--posedness result for the case $\bu=\boldsymbol0$.

\begin{proposition}[well--posedness for $\bu=\boldsymbol0$]
\label{prop:weightedMeyers}
There is a constant $\Lambda_0$, depending only on $\Omega$ and $\varpi$, such that, if $\Lambda(\varkappa) \geq \Lambda_0$, problem \eqref{eq:SimpHeat} with $\bu = 0$ is well--posed. This, in particular, implies that
\begin{equation}
\label{eq:inf-sup-for-heat}
  \| \GRAD T \|_{\bL^2(\varpi,\Omega)} \leq C_\varkappa \sup_{r \in H^1_0(\varpi^{-1},\Omega)} \frac{ \int_\Omega \varkappa \GRAD T \cdot \GRAD r \diff x}{ \| \GRAD r \|_{\bL^2(\varpi^{-1},\Omega)}} \quad \forall T \in H^1_0(\varpi,\Omega).
\end{equation}
The constant $C_\varkappa$ depends only on $\Lambda_0$, $\Lambda(\varkappa)$, $\Omega$, $d$, and $\varpi$.
\end{proposition}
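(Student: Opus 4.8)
First I would cast problem~\eqref{eq:SimpHeat} with $\bu=\boldsymbol0$ in the Banach--Ne\v{c}as--Babu\v{s}ka framework, with $H^1_0(\varpi,\Omega)$ as trial space and $H^1_0(\varpi^{-1},\Omega)$ as test space, and then obtain the variable--coefficient statement as a small perturbation of the constant--coefficient one. Throughout I would use that $\varpi\in A_2 \iff \varpi^{-1}\in A_2$ and that the hypotheses on $\varpi$ are symmetric under $\varpi\leftrightarrow\varpi^{-1}$: both weights are continuous and bounded above and below on a fixed neighborhood of $\partial\Omega$ (here one uses that $\bar{\mathcal G}$ is compact, so $\varpi\in C(\bar{\mathcal G})$ is also bounded above there), and both have their singular sets compactly contained in $\Omega$. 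This symmetry lets one deduce the nondegeneracy in the test variable from the inf--sup condition in the trial variable, and conversely.

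\emph{Constant coefficients.} The heart of the matter is to show that $a_0(T,r):=\int_\Omega\GRAD T\cdot\GRAD r\diff x$ is an isomorphism from $H^1_0(\varpi,\Omega)$ onto $H^{-1}(\varpi,\Omega)$. Boundedness is immediate: writing the integrand as $(\varpi^{1/2}\GRAD T)\cdot(\varpi^{-1/2}\GRAD r)$ and using Cauchy--Schwarz gives $|a_0(T,r)|\le\|\GRAD T\|_{\bL^2(\varpi,\Omega)}\|\GRAD r\|_{\bL^2(\varpi^{-1},\Omega)}$. The real content is the inf--sup bound: there is $C_\varpi=C_\varpi(\Omega,d,\varpi)$ with
\[
\|\GRAD T\|_{\bL^2(\varpi,\Omega)}\le C_\varpi\sup_{r\in H^1_0(\varpi^{-1},\Omega)}\frac{a_0(T,r)}{\|\GRAD r\|_{\bL^2(\varpi^{-1},\Omega)}}\qquad\forall\,T\in H^1_0(\varpi,\Omega),
\]
which is precisely the weighted a priori estimate $\|\GRAD u\|_{\bL^2(\varpi,\Omega)}\lesssim\|f\|_{H^{-1}(\varpi,\Omega)}$ for the Dirichlet problem $-\Delta u=f$. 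I would prove it by exploiting that $\varpi\in A_2(\Omega)$: on the neighborhood $\mathcal G$ of $\partial\Omega$ where $\varpi$ is bounded above and below the estimate reduces to the unweighted Meyers/Gehring regularity available on Lipschitz domains, whereas on an interior set $\mathcal O$ with $\mathbf S_{\mathrm{sing}}(\varpi)\Subset\mathcal O\Subset\Omega$ and $\mathcal G\cup\mathcal O\supset\bar\Omega$ the boundary plays no role and one may invoke the full $A_2$--weighted Calder\'on--Zygmund estimates for $-\Delta$ on $\R^d$ (equivalently, Rubio de Francia extrapolation from the unweighted $L^p$ bounds, $1<p<\infty$). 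Patching the two via a partition of unity subordinate to $\{\mathcal G,\mathcal O\}$, with the commutators $[\Delta,\varphi_i]$ treated as lower--order and absorbed, yields the global estimate. Applying the same reasoning to $\varpi^{-1}$ gives the mirror bound, hence the nondegeneracy in the test variable and the isomorphism property. (Alternatively, one may quote a weighted well--posedness result for the Poisson problem directly.)

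\emph{Perturbation and conclusion.} Writing $\varkappa=\varkappa_++(\varkappa-\varkappa_+)$ one has $\|\varkappa-\varkappa_+\|_{L^\infty(\Omega)}\le\varkappa_+-\varkappa_-=\varkappa_+\bigl(1-\Lambda(\varkappa)\bigr)$, so
\[
a(T,r):=\int_\Omega\varkappa\GRAD T\cdot\GRAD r\diff x=\varkappa_+\,a_0(T,r)+\int_\Omega(\varkappa-\varkappa_+)\GRAD T\cdot\GRAD r\diff x,
\]
the last term being bounded in modulus by $\varkappa_+\bigl(1-\Lambda(\varkappa)\bigr)\|\GRAD T\|_{\bL^2(\varpi,\Omega)}\|\GRAD r\|_{\bL^2(\varpi^{-1},\Omega)}$. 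Combining this with the previous step,
\[
\sup_{r}\frac{a(T,r)}{\|\GRAD r\|_{\bL^2(\varpi^{-1},\Omega)}}\ge\varkappa_+\Bigl(\tfrac1{C_\varpi}-\bigl(1-\Lambda(\varkappa)\bigr)\Bigr)\|\GRAD T\|_{\bL^2(\varpi,\Omega)},
\]
so that, provided $1-\Lambda(\varkappa)<1/C_\varpi$ — that is $\Lambda(\varkappa)\ge\Lambda_0$ for a suitable $\Lambda_0=\Lambda_0(\Omega,\varpi)\in(0,1)$ — the form $a$ inherits the inf--sup condition, and, by the $\varpi\leftrightarrow\varpi^{-1}$ symmetry, the nondegeneracy in $r$, from $a_0$. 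The Banach--Ne\v{c}as--Babu\v{s}ka theorem then delivers existence, uniqueness and continuous dependence for~\eqref{eq:SimpHeat} with $\bu=\boldsymbol0$, and the inf--sup bound just obtained for $a$ is exactly~\eqref{eq:inf-sup-for-heat} with $C_\varkappa=[\varkappa_+(\Lambda(\varkappa)-\Lambda_0)]^{-1}$ (the $\varkappa_+$ factor being removable by the harmless rescaling $\varkappa\mapsto\varkappa/\varkappa_+$), which depends only on $\Lambda_0$, $\Lambda(\varkappa)$, $\Omega$, $d$ and $\varpi$.

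The main obstacle is the constant--coefficient weighted estimate: establishing that $-\Delta$ is an isomorphism between $H^1_0(\varpi,\Omega)$ and $H^{-1}(\varpi,\Omega)$ over a general Lipschitz domain and an arbitrary $A_2(\Omega)$ weight, which is exactly where the structure of the class $A_p(D)$ (weight regular near $\partial\Omega$, singularities compactly contained) is used. Once that is in hand, the passage to the mildly oscillating coefficient $\varkappa$ is a routine Neumann--series argument, and it is precisely this perturbation step that produces the near--constancy threshold $\Lambda_0$.
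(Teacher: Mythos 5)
Your proposal is correct in substance, but it takes a very different route from the paper: the paper's entire proof of this proposition is a citation to an external result (\cite{OS:17infsup}, Theorem 12), whereas you outline a self-contained argument. The two ingredients you isolate are exactly the right ones. Your perturbation step (writing $\varkappa=\varkappa_+ +(\varkappa-\varkappa_+)$, bounding the remainder by $\varkappa_+(1-\Lambda(\varkappa))$, and recovering the inf--sup condition for $\Lambda(\varkappa)$ close enough to $1$) is essentially identical to the argument the paper itself uses later, at the discrete level, in Proposition~\ref{prop:discrMeyers}; so that part is both correct and consonant with the authors' own technique, and it correctly identifies $\Lambda_0$ as the near-constancy threshold produced by the Neumann-series absorption. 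What your writeup buys is transparency about where $\Lambda_0$ and $C_\varkappa$ come from; what the paper's citation buys is not having to prove the genuinely hard step, namely that $-\Delta$ is an isomorphism from $H^1_0(\varpi,\Omega)$ onto $H^{-1}(\varpi,\Omega)$ for $\varpi\in A_2(\Omega)$ on a Lipschitz domain.

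On that hard step your argument is only a sketch, and the sketch papers over the real difficulty. Localizing with a partition of unity subordinate to $\{\mathcal G,\mathcal O\}$ does reduce matters to an unweighted boundary estimate plus an interior weighted Calder\'on--Zygmund estimate, but the commutator terms $[\Delta,\varphi_i]$ are first-order in $T$ and only ``lower order'' relative to the $H^1$ norm; they cannot simply be ``absorbed.'' One ends up with an estimate of the form $\|\GRAD T\|_{\bL^2(\varpi,\Omega)}\lesssim \sup_r a_0(T,r)/\|\GRAD r\|_{\bL^2(\varpi^{-1},\Omega)}+\|T\|_{L^2(\varpi,\Omega)}$, and removing the compact lower-order term requires a Fredholm/contradiction-compactness argument together with a uniqueness result for the weighted Dirichlet problem, which is itself not immediate in $H^1_0(\varpi,\Omega)$ since the solution is not an admissible test function. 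You do flag this as the main obstacle and offer the fallback of quoting a weighted well-posedness result for the Poisson problem directly --- which is precisely what the paper does --- so the proposal is acceptable as a proof outline, but as written the constant-coefficient step is not yet a proof. A final minor point: your $C_\varkappa=[\varkappa_+(\Lambda(\varkappa)-\Lambda_0)]^{-1}$ carries a factor $\varkappa_+^{-1}$ that is not among the dependencies listed in the statement; this is a normalization issue in the statement itself (the inf--sup constant must scale inversely with $\varkappa$), and your remark about rescaling is the right way to reconcile it.
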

\begin{proof}
  See \cite[Theorem 12]{OS:17infsup}.
\end{proof}

We now study the case with nonzero convection.

\begin{proposition}[well--posedness for $\bu\neq \boldsymbol0$]
\label{prop:Fredholm}
Assume that $\Lambda(\varkappa) \geq \Lambda_0$, where $\Lambda_0$ is defined in Proposition~\ref{prop:weightedMeyers}. If
\begin{equation}
\label{eq:assumption_grad_bu}
  C_\varkappa C_{e,2}\| \GRAD \bu \|_{\bL^2(\Omega)} \leq q <1,
\end{equation}
then problem \eqref{eq:SimpHeat} is well--posed. This, in particular, implies that the solution $T$ of problem \eqref{eq:SimpHeat} satisfies the estimate
\[
  \| \GRAD T \|_{\bL^2(\varpi,\Omega)} \leq C_H(q) \| \calH \|_{H^{-1}(\varpi,\Omega)}, \qquad C_H(q) = \frac{C_\varkappa}{1-q}.
\]
\end{proposition}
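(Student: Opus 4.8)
The plan is to treat the convective term as a compact perturbation of the well-posed diffusion problem from Proposition~\ref{prop:weightedMeyers}, and then rule out a nontrivial kernel by a smallness/energy argument, so that a Fredholm alternative gives well-posedness. Concretely, write the bilinear form associated to \eqref{eq:SimpHeat} as $a(T,r) = a_0(T,r) - b(T,r)$, where $a_0(T,r) = \int_\Omega \varkappa \GRAD T \cdot \GRAD r \diff x$ and $b(T,r) = \int_\Omega T\bu\cdot\GRAD r \diff x$. By Proposition~\ref{prop:weightedMeyers}, $a_0$ satisfies the inf--sup condition \eqref{eq:inf-sup-for-heat} on $H^1_0(\varpi,\Omega)\times H^1_0(\varpi^{-1},\Omega)$ with constant $C_\varkappa$; equivalently, the operator $A_0: H^1_0(\varpi,\Omega)\to H^{-1}(\varpi,\Omega)$ induced by $a_0$ is an isomorphism with $\|A_0^{-1}\| \leq C_\varkappa$. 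By Proposition~\ref{prop:bounded}, estimate \eqref{eq:boundedness_2} (applied with $\theta = T$, $r\in H^1_0(\varpi^{-1},\Omega)$), the form $b$ is bounded: $|b(T,r)| \leq C_{e,2}\|\GRAD\bu\|_{\bL^2(\Omega)} \|\GRAD T\|_{\bL^2(\varpi,\Omega)} \|\GRAD r\|_{\bL^2(\varpi^{-1},\Omega)}$, so the induced operator $B$ has norm $\leq C_{e,2}\|\GRAD\bu\|_{\bL^2(\Omega)}$.

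First I would observe that under \eqref{eq:assumption_grad_bu} the perturbation is in fact small relative to $A_0^{-1}$: $\|A_0^{-1}B\| \leq C_\varkappa C_{e,2}\|\GRAD\bu\|_{\bL^2(\Omega)} \leq q < 1$. Hence $A = A_0 - B = A_0(I - A_0^{-1}B)$ is invertible by a Neumann series, with $\|A^{-1}\| \leq \|A_0^{-1}\| \sum_{j\geq 0} \|A_0^{-1}B\|^j \leq C_\varkappa/(1-q)$. This already gives both the well-posedness of \eqref{eq:SimpHeat} and the quantitative bound: for the solution $T$, $\|\GRAD T\|_{\bL^2(\varpi,\Omega)} \leq \|A^{-1}\|\,\|\calH\|_{H^{-1}(\varpi,\Omega)} \leq C_H(q)\|\calH\|_{H^{-1}(\varpi,\Omega)}$ with $C_H(q) = C_\varkappa/(1-q)$, exactly as stated. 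The name "Fredholm" in the proposition label suggests the authors may instead argue via compactness of $B$ (using the compact embedding $H^1_0(\varpi,\Omega) \hookrightarrow L^q(\varpi,\Omega)$ from Proposition~\ref{prop:WeightedEmb}, valid since $\mathbf{S}_{\mathrm{sing}}(\varpi)\Subset\Omega$) plus injectivity; in that route one would show $ T = 0 \Rightarrow T = 0$ by testing with a $r$ realizing the inf--sup for $a_0$ and absorbing the $b$ term using \eqref{eq:assumption_grad_bu}. Either way the smallness condition \eqref{eq:assumption_grad_bu} is what closes the argument.

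The main obstacle is making sure that the convective form $b$ is genuinely controlled in the weighted norms that appear in the inf--sup estimate \eqref{eq:inf-sup-for-heat}: $b$ must be bounded as a form on $H^1_0(\varpi,\Omega)\times H^1_0(\varpi^{-1},\Omega)$ with the weighted gradient norms, and this is precisely what estimate \eqref{eq:boundedness_2} provides, so the delicate point has been front-loaded into Proposition~\ref{prop:bounded}. The remaining care is bookkeeping: verifying that the integration-by-parts identity $b(T,r) = \int_\Omega T\,\bu\cdot\GRAD r\,\diff x$ is the correct skew form (no boundary terms, since $r$ vanishes on $\partial\Omega$ and $\bu$ is solenoidal), and that the constant $C_\varkappa$ from Proposition~\ref{prop:weightedMeyers} is the same one entering \eqref{eq:assumption_grad_bu}, so that the composed operator norm is $\leq q$. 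Once these are in place, the Neumann series (or Fredholm-alternative) conclusion is immediate.
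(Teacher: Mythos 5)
Your proposal is correct and follows essentially the same route as the paper: the authors also factor the operator as $(\calA+\calB_\bu) = \calA(I+\calA^{-1}\calB_\bu)$, bound $\|\calA^{-1}\calB_\bu\|\leq C_\varkappa C_{e,2}\|\GRAD\bu\|_{\bL^2(\Omega)}\leq q<1$ using \eqref{eq:inf-sup-for-heat} and \eqref{eq:boundedness_2}, and conclude by a Neumann series with the same constant $C_\varkappa/(1-q)$. The only cosmetic difference is that the paper also proves compactness of $\calB_\bu$ inside this proof, but explicitly flags it as being needed only later, not for this proposition.
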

\begin{proof}
Let us introduce the linear map $\calA: H^1_0(\varpi,\Omega) \rightarrow H^{-1}(\varpi,\Omega)$ via
\[
\langle \calA T, r \rangle:=  \int_\Omega\varkappa \GRAD T \cdot \GRAD r  \diff x, \quad \forall T \in H^1_0(\varpi,\Omega), \ \forall r \in H^1_0(\varpi^{-1},\Omega).
\]
Clearly, $\calA$ is a bounded linear operator and, moreover, owing to the inf--sup estimate \eqref{eq:inf-sup-for-heat}, $\calA$ is invertible with $\| \calA^{-1} \|_{\mathcal{L}(H^{-1}(\varpi,\Omega),H^1_0(\varpi,\Omega))} \leq C_\varkappa$.

Given $\bu \in \bH^1_0(\Omega)$, we introduce the map $\calB_\bu : H^1_0(\varpi,\Omega) \to H^{-1}(\varpi,\Omega)$ defined by
\[
  \langle \calB_\bu T, r \rangle = -\int_\Omega T\bu \cdot \GRAD r \diff x, \quad \forall T \in H^1_0(\varpi,\Omega), \forall r \in H^1_0(\varpi^{-1},\Omega).
\]
Estimate \eqref{eq:boundedness_2} shows that $\calB_\bu $ is a bounded linear map which satisfies the estimate
\[
\| \calB_\bu \|_{\mathcal{L}(H^1_0(\varpi,\Omega), H^{-1}(\varpi,\Omega))} \leq C_{e,2} \| \GRAD \bu \|_{\bL^2(\Omega)}.
\]
Since it will be needed later, we now show that $\calB_\bu$ is compact. Let $\{ T_n \}_{n \geq 0}$ be a bounded sequence in $H_0^1(\omega,\Omega)$. Since Proposition~\ref{prop:WeightedEmb} guarantees that, for $k \leq d/(d-1)$, the embedding $H_0^1(\varpi,\Omega) \hookrightarrow L^{2k}(\varpi,\Omega)$ is compact, we conclude the existence of a subsequence $\{ T_{n_j} \}_{j \geq 0}$ of $\{ T_n \}_{n \geq 0}$ such that $ T_{n_j} \to T^*$ in $L^{2k}(\varpi,\Omega)$ as $j \uparrow \infty$. Thus, estimate \eqref{eq:estimateBu} yields
\[
 \| \calB_\bu T_{n_j} -  \calB_\bu T^* \|_{H^{-1}(\varpi,\Omega)} \lesssim  \|  T_{n_j} -   T^* \|_{L^{2k}(\varpi,\Omega)}  \|  \nabla \bu  \|_{L^{2}(\Omega)} \rightarrow 0, \quad j \uparrow \infty.
\]
This shows that $\{  \calB_\bu T_{n_j} \}_{j \geq 0}$ converges in $H^{-1}(\varpi,\Omega)$ and thus that $\calB_\bu$ is compact.
%
%we have that it is compact; see \eqref{eq:estimateBu} and Proposition~\ref{prop:WeightedEmb}.

With this notation, we have that problem \eqref{eq:SimpHeat} can be written as
\[
  (\calA + \calB_\bu )T  = \calH \qquad \Leftrightarrow \qquad (I+ \calA^{-1} \calB_\bu )T = \calA^{-1} \calH
\]
in $H^{-1}(\varpi,\Omega)$. Since $\calA^{-1} \calB_\bu$ is continuous assumption \eqref{eq:assumption_grad_bu} implies that this problem has a unique solution, because
\[
  \| \calA^{-1} \calB_\bu \|_{\mathcal{L}(H_0^1(\varpi,\Omega))} \leq C_\varkappa C_{e,2} \| \GRAD \bu \|_{\bL^2(\Omega)} \leq q <1.
\]
Moreover, we have the estimate
\begin{align*}
  \| \GRAD T \|_{\bL^2(\varpi,\Omega)} &\leq \frac{ \|\calA^{-1} \|_{\mathcal{L}(H^{-1}(\varpi,\Omega),H^1_0(\varpi,\Omega))}}{ 1 - \| \calA^{-1} \calB_\bu \|_{\mathcal{L}(H_0^1(\varpi,\Omega))}} \| \calH \|_{H^{-1}(\varpi,\Omega)} \leq \frac{C_\varkappa}{1-q} \| \calH \|_{H^{-1}(\varpi,\Omega)}.
\end{align*}
Notice that $C_H(q) = C_\varkappa/(1-q)$ depends only on $q$, $\Lambda_0$, $\Lambda(\varkappa)$, $\Omega$, $d$, and $\varpi$.
\end{proof}

\subsection{Existence of solutions}
\label{sub:existence}

Having studied each one of the subproblems separately, we proceed to show existence of solutions to \eqref{eq:BusiWeak} via a fixed point argument. To accomplish this task, we define the map $\frakF : H^1_0(\varpi,\Omega) \times \bH^1_0(\Omega) \to H^1_0(\varpi,\Omega) \times \bH^1_0(\Omega)$ by $\frakF(\theta,\bu):=(\Te,\ue) $, where $(\Te,\ue)$ solves
\begin{align}
\label{eq:first_eqn_aux}
    & \int_\Omega \left( \nu(\theta) \GRAD \ue : \GRAD \bv + (\ue\cdot \GRAD \ue )\cdot \bv - \pe \DIV \bv \right) \diff x = \int_\Omega \theta\bg \cdot \bv \diff x \quad \forall \bv \in \bH^1_0(\Omega), \\
    \label{eq:second_eqn_aux}
    & \int_\Omega q \DIV \ue \diff x = 0 \quad \forall q \in L^2_0(\Omega), \\
    \label{eq:third_eqn_aux}
    & \int_\Omega \left( \kappa(\theta) \GRAD \Te\cdot\GRAD r  - \Te\ue\cdot\GRAD r \right) \diff x = \langle \calH, r \rangle \quad \forall r \in H^1_0(\varpi^{-1},\Omega).
\end{align}
Note that the definition of $(\Te,\ue)$ implies solving a stationary Navier--Stokes equation with prescribed temperature $\theta$. If this problem has a unique solution, then its velocity component is used to solve a stationary heat equation with convection. The following result shows that the map $\frakF$ is well--defined. To concisely state it we define
\begin{align}
  \frakB_\bu  &= \left\{ \bu \in \bH^1_0(\Omega): \| \GRAD \bu \|_{\bL^2(\Omega)} \leq G \right\}, 
  \quad
  G = \frac1{2C_\varkappa C_{e,2} },
  \label{def:B_u_and_G}
  \\
  \frakB_T &= \left\{ \theta \in H^1_0(\varpi,\Omega): \| \GRAD \theta \|_{\bL^2(\varpi,\Omega)} \leq S \right\}, 
  \quad
  S = \frac{\nu_-}{gC_{e,1}} \min \left\{ \frac{\nu_-}{C_P}, \frac1{2C_\varkappa C_{e,2}} \right\},  
  \label{def:B_T_and_S}
\end{align}
and  $\frakB = \frakB_T \times \frakB_\bu$.

\begin{proposition}[$\frakF$ is well--defined]
\label{prop:Fwd}
Assume that $\Lambda(\kappa) \geq \Lambda_0$, where $\Lambda_0$ is defined in Proposition~\ref{prop:weightedMeyers}. If the heat source $\calH \in H^{-1}(\varpi,\Omega)$ satisfies the estimate
\begin{equation}
  \| \calH \|_{H^{-1}(\varpi,\Omega)} \leq \frac{S}{C_H(1/2)},
 \label{eq:assump_heat_source}
\end{equation}
then $\frakF$ is well--defined on $\frakB$. In addition, we have $\frakF(\frakB) \subset \frakB$.
\end{proposition}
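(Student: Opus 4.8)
The plan is to verify the two claims in Proposition~\ref{prop:Fwd} in sequence: first that $\frakF$ is well--defined on $\frakB$ (i.e., each of the three subproblems \eqref{eq:first_eqn_aux}--\eqref{eq:third_eqn_aux} has a unique solution when $(\theta,\bu) \in \frakB$), and second that the resulting pair $(\Te,\ue)$ lies back in $\frakB = \frakB_T \times \frakB_\bu$. The whole argument is a matter of chaining together the a priori estimates from Theorem~\ref{thm:existNSEsimp} and Propositions~\ref{prop:weightedMeyers} and \ref{prop:Fredholm}, with the constants $G$ and $S$ from \eqref{def:B_u_and_G}--\eqref{def:B_T_and_S} chosen precisely so that the smallness hypotheses of those results are met.

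First I would address well--posedness of the Navier--Stokes subproblem \eqref{eq:first_eqn_aux}--\eqref{eq:second_eqn_aux}. Since $\theta \in \frakB_T$, we have $\|\GRAD\theta\|_{\bL^2(\varpi,\Omega)} \leq S \leq \nu_-^2/(g C_P C_{e,1})$ by the definition of $S$ in \eqref{def:B_T_and_S} (taking the first term in the minimum). Hence the uniqueness condition $C_P C_{e,1} g \|\GRAD\theta\|_{\bL^2(\varpi,\Omega)} < \nu_-^2$ of Theorem~\ref{thm:existNSEsimp} holds, so $\ue$ is uniquely determined and satisfies $\|\GRAD\ue\|_{\bL^2(\Omega)} \leq (C_{e,1}g/\nu_-)\|\GRAD\theta\|_{\bL^2(\varpi,\Omega)}$. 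Using the second term in the minimum defining $S$, this gives $\|\GRAD\ue\|_{\bL^2(\Omega)} \leq (C_{e,1}g/\nu_-) \cdot (\nu_-/(g C_{e,1})) \cdot 1/(2C_\varkappa C_{e,2}) = 1/(2C_\varkappa C_{e,2}) = G$, so $\ue \in \frakB_\bu$. (Here $\varkappa = \kappa(\theta)$, and $\Lambda(\kappa) \geq \Lambda_0$ ensures $\Lambda(\varkappa) \geq \Lambda_0$ since $\kappa_- \leq \kappa(\theta) \leq \kappa_+$, so the constant $C_\varkappa$ is the one from Proposition~\ref{prop:weightedMeyers}.)

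Next I would handle the heat subproblem \eqref{eq:third_eqn_aux}. Having just shown $\ue \in \frakB_\bu$, i.e. $\|\GRAD\ue\|_{\bL^2(\Omega)} \leq G = 1/(2C_\varkappa C_{e,2})$, we get $C_\varkappa C_{e,2}\|\GRAD\ue\|_{\bL^2(\Omega)} \leq 1/2 =: q < 1$, which is exactly the smallness condition \eqref{eq:assumption_grad_bu} of Proposition~\ref{prop:Fredholm} with $q = 1/2$. Therefore \eqref{eq:third_eqn_aux} is well--posed and its solution $\Te$ satisfies $\|\GRAD\Te\|_{\bL^2(\varpi,\Omega)} \leq C_H(1/2)\|\calH\|_{H^{-1}(\varpi,\Omega)}$. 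Invoking hypothesis \eqref{eq:assump_heat_source}, $\|\calH\|_{H^{-1}(\varpi,\Omega)} \leq S/C_H(1/2)$, we conclude $\|\GRAD\Te\|_{\bL^2(\varpi,\Omega)} \leq S$, i.e. $\Te \in \frakB_T$. Combined with the previous paragraph, $\frakF(\theta,\bu) = (\Te,\ue) \in \frakB_T \times \frakB_\bu = \frakB$, and along the way we showed each subproblem is uniquely solvable, so $\frakF$ is well--defined on $\frakB$ with $\frakF(\frakB) \subset \frakB$.

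The argument has no real obstacle — it is essentially bookkeeping — but the one point demanding care is the careful unwinding of the definition of $S$ in \eqref{def:B_T_and_S}, making sure that the \emph{same} $S$ simultaneously delivers (a) the uniqueness threshold for Navier--Stokes, (b) the bound $\|\GRAD\ue\|_{\bL^2(\Omega)} \leq G$ that feeds into the smallness condition for the heat equation, and (c) via \eqref{eq:assump_heat_source}, the return bound $\|\GRAD\Te\|_{\bL^2(\varpi,\Omega)} \leq S$; the two--term minimum in the definition of $S$ is precisely what is needed to do both (a) and (b) at once. A secondary point worth a sentence is noting that $\varkappa = \kappa(\theta)$ is an admissible coefficient for Propositions~\ref{prop:weightedMeyers} and \ref{prop:Fredholm}: it is in $L^\infty(\Omega)$, bounded between $\kappa_-$ and $\kappa_+$, and satisfies $\Lambda(\varkappa) \geq \Lambda(\kappa) \geq \Lambda_0$, so that the constant $C_\varkappa$ appearing in $G$ and $S$ is well defined independently of $\theta$.
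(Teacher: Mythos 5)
Your proposal is correct and follows essentially the same route as the paper's proof: apply Theorem~\ref{thm:existNSEsimp} to get a unique $\ue$ with $\|\GRAD\ue\|_{\bL^2(\Omega)}\leq G$ from the definition of $S$, then apply Proposition~\ref{prop:Fredholm} with $q=1/2$ and the hypothesis \eqref{eq:assump_heat_source} to get $\Te\in\frakB_T$. You actually spell out two details the paper leaves implicit (the role of the first term in the minimum defining $S$ for the uniqueness threshold, and the admissibility of $\varkappa=\kappa(\theta)$), which is fine.
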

\begin{proof}
Let $\theta \in \frakB_T$. Invoke Theorem~\ref{thm:existNSEsimp} to conclude the existence of a unique $\ue \in \bH^1_0(\Omega)$ that solves \eqref{eq:first_eqn_aux} and \eqref{eq:second_eqn_aux}. Moreover, $\ue$ satisfies the estimate
\[
  \|\GRAD \ue \|_{\bL^2(\Omega)} \leq \frac{C_{e,1}g}{\nu_-} \| \GRAD \theta \|_{\bL^2(\varpi,\Omega)} \leq \frac{C_{e,1}g}{\nu_-} \frac{\nu_-}{gC_{e,1}}\frac1{2C_\varkappa C_{e,2}} =G.
\]
Consequently, $\ue \in \frakB_\bu$. Now, since $\Lambda(\kappa) \geq \Lambda_0$ and $\ue \in \frakB_\bu$, we invoke Proposition~\ref{prop:Fredholm}, with $q=1/2$, to conclude that there exits a unique $\Te$ that solves \eqref{eq:third_eqn_aux}. Moreover, the condition on $\calH$ guarantees that
\[
  \| \GRAD \Te \|_{\bL^2(\varpi,\Omega)} \leq C_{H}(1/2)   \| \calH \|_{H^{-1}(\varpi,\Omega)} \leq S,
\]
which implies that $\Te \in \frakB_T$. We have thus proved the statements of the theorem.
\end{proof}

As a last preparatory step we show that the mapping $\frakF$ is weakly continuous.

\begin{lemma}[weak continuity]
\label{lem:frakFwcont}
The mapping $\frakF: \frakB \to \frakB$ is weakly continuous.
\end{lemma}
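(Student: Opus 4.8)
The plan is to show sequential weak continuity: take a sequence $(\theta_n, \bu_n) \rightharpoonup (\theta, \bu)$ in $H^1_0(\varpi,\Omega) \times \bH^1_0(\Omega)$, set $(\Te_n, \ue_n) := \frakF(\theta_n,\bu_n)$, and prove that $(\Te_n, \ue_n) \rightharpoonup \frakF(\theta,\bu)$. Since $\frakF(\frakB)\subset\frakB$ by Proposition~\ref{prop:Fwd}, the sequence $(\Te_n,\ue_n)$ is bounded, hence has a weakly convergent subsequence to some limit $(\Te,\ue)$; a standard Urysohn-type argument reduces the claim to identifying every such weak limit with $\frakF(\theta,\bu)$. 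By the compact embeddings of Proposition~\ref{prop:WeightedEmb} (with $\mathbf{S}_{\mathrm{sing}}(\varpi)\Subset\Omega$) and the Rellich--Kondrachov theorem, along the subsequence we additionally get $\theta_n \to \theta$ in $L^{2k}(\varpi,\Omega)$, $\Te_n \to \Te$ in $L^{2k}(\varpi,\Omega)$ for some $k$ with $1\le k\le d/(d-1)$, and $\ue_n \to \ue$ in $\bL^s(\Omega)$ for appropriate $s$.

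Next I would pass to the limit in the two blocks of equations defining $\frakF(\theta_n,\bu_n)$. For the Navier--Stokes block \eqref{eq:first_eqn_aux}--\eqref{eq:second_eqn_aux}: the divergence constraint is preserved under weak convergence; the convective term $\int_\Omega (\ue_n\cdot\GRAD\ue_n)\cdot\bv$ converges to $\int_\Omega (\ue\cdot\GRAD\ue)\cdot\bv$ by writing it (after integration by parts, using $\DIV\ue_n=0$) as $-\int_\Omega (\ue_n\otimes\ue_n):\GRAD\bv$ and combining strong $\bL^s$ convergence of $\ue_n$ with the fixed test function $\bv$; the viscous term needs care because $\nu(\theta_n)$ multiplies $\GRAD\ue_n$ — here I use that $\theta_n\to\theta$ a.e. (along a further subsequence) so $\nu(\theta_n)\to\nu(\theta)$ a.e. and boundedly, hence $\nu(\theta_n)\GRAD\bv \to \nu(\theta)\GRAD\bv$ strongly in $\bL^2(\Omega)$ by dominated convergence, which pairs against $\GRAD\ue_n \rightharpoonup \GRAD\ue$ in $\bL^2(\Omega)$; the forcing $\int_\Omega\theta_n\bg\cdot\bv$ converges using the $L^q$ convergence of $\theta_n$ and $\bv\in\bL^{q'}(\Omega)$, cf. the proof of Proposition~\ref{prop:bounded}. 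The pressure $\pe_n$ is recovered, bounded in $L^2_0(\Omega)$ via the inf--sup condition, and converges weakly; this identifies the limit as the Navier--Stokes solution with data $\theta$. Because the smallness hypothesis $\theta\in\frakB_T$ guarantees uniqueness (Theorem~\ref{thm:existNSEsimp}), the whole limit velocity is $\ue = $ the velocity component of $\frakF(\theta,\bu)$, not merely a subsequential limit.

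For the heat block \eqref{eq:third_eqn_aux} I pass to the limit similarly: $\kappa(\theta_n)\GRAD r \to \kappa(\theta)\GRAD r$ strongly in $\bL^2(\varpi^{-1},\Omega)$ by dominated convergence (using $\varpi^{-1}\in A_1\Rightarrow\varpi\in L^\infty$ so $\GRAD r\in\bL^2(\varpi^{-1},\Omega)$ is controlled), paired with $\GRAD\Te_n\rightharpoonup\GRAD\Te$ in $\bL^2(\varpi,\Omega)$; and the convective term $\int_\Omega \Te_n\ue_n\cdot\GRAD r$ is handled by the trilinear bound \eqref{eq:estimateBu}: write the difference $\Te_n\ue_n - \Te\ue = (\Te_n-\Te)\ue_n + \Te(\ue_n-\ue)$, the first piece is controlled by $\|\Te_n-\Te\|_{L^\ell(\varpi,\Omega)}\|\ue_n\|_{\bL^m(\Omega)}\|\GRAD r\|_{\bL^2(\varpi^{-1},\Omega)}\to 0$ using the strong convergence $\Te_n\to\Te$ and boundedness of $\ue_n$, and the second piece converges since $\ue_n\to\ue$ strongly in $\bL^m(\Omega)$ (Rellich) while $\Te\in L^\ell(\varpi,\Omega)$ is fixed. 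Thus $\Te$ solves \eqref{eq:SimpHeat} with convection field $\ue$; since $\ue\in\frakB_\bu$ meets the smallness condition \eqref{eq:assumption_grad_bu} with $q=1/2$, Proposition~\ref{prop:Fredholm} gives uniqueness, so $\Te$ is precisely the temperature component of $\frakF(\theta,\bu)$.

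The main obstacle is the handling of the variable coefficients $\nu(\theta_n)$ and $\kappa(\theta_n)$ together with the singular-weight convective term in the temperature equation: one must be careful that the a.e. convergence needed for dominated convergence on $\nu(\theta_n),\kappa(\theta_n)$ is available only along a further subsequence extracted from the $L^{2k}(\varpi,\Omega)$-convergent one, and that the trilinear estimate \eqref{eq:estimateBu} is strong enough to absorb the loss of regularity from $\calH\in H^{-1}(\varpi,\Omega)$. Once the limit $(\Te,\ue)$ is identified with $\frakF(\theta,\bu)$ via the uniqueness built into the definition of $\frakB$, the Urysohn subsequence principle upgrades subsequential convergence to convergence of the full sequence, completing the proof.
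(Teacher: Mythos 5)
Your proposal is correct and follows essentially the same route as the paper's proof: extract a weakly convergent subsequence from the bounded image sequence, use a compact embedding to obtain a.e.\ convergence of $\nu(\theta_n)$ and $\kappa(\theta_n)$, pass to the limit term by term, identify the limit with $\frakF(\theta,\bu)$ via the uniqueness built into $\frakB$, and conclude for the whole sequence by the subsequence principle (the paper gets the a.e.\ convergence through the reverse H\"older embedding $H^1_0(\varpi,\Omega)\hookrightarrow W^{1,1+\epsilon}(\Omega)\hookrightarrow\hookrightarrow L^{1+\epsilon}(\Omega)$ rather than the weighted compact embedding, which is immaterial). The only point needing slightly more care than your write-up suggests is the heat convective term in $d=3$: you cannot use a single H\"older pair $(\ell,m)$ with $\ell^{-1}+m^{-1}=1/2$ for both pieces of the splitting, since compactness of $\Te_n$ in $L^{\ell}(\varpi,\Omega)$ forces $\ell\le 2d/(d-1)$ while Rellich compactness of $\ue_n$ forces $m<2d$; instead, as in \eqref{eq:estimateBu}, one takes $\ell=2d/(d-1)$ with $\ue_n$ merely bounded in $\bL^{2d}(\Omega)$ for the first piece, and $\ell'>2d/(d-1)$, $m'<2d$ for the second.
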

\begin{proof}
Let $\{\theta_n \}_{n\geq 0} \subset \frakB_T$ and $\{\bu_n\}_{n\geq 0} \subset \frakB_\bu$ be such that $ (\theta_n,\bu_n) \rightharpoonup (\theta,\bu)$ in $H^1_0(\varpi,\Omega) \times \bH^1_0(\Omega)$. As the set $\frakB = \frakB \times \frakB_{\bu}$ is closed and convex, it is weakly closed. Therefore, $(\theta,\bu) \in \frakB$. Set $(\Te_n,\ue_n) = \frakF(\theta_n,\bu_n)$ and $(\Te,\ue) = \frakF(\theta,\bu)$. We must show that $(\Te_n,\ue_n) \rightharpoonup (\Te,\ue)$.

Owing to the reverse H\"older inequality \cite[Theorem 7.4]{MR1800316} we have that, for some $\epsilon>0$, the embedding $ H^1_0(\varpi,\Omega) \hookrightarrow W^{1,1+\epsilon}(\Omega) $ is continuous. Since $W^{1,1+\epsilon}(\Omega)$ is compactly embedded in $L^{1+\epsilon}(\Omega)$, we obtain that $\theta_n \to \theta $ in $L^{1+\epsilon}(\Omega)$. The continuity of $\kappa$ implies then that $\kappa(\theta_n) \to \kappa(\theta)$ almost everywhere in $\Omega$. Now, since $\{ (\Te_n,\ue_n) \}_{n\geq0} \subset \frakB$ is bounded, we can extract a weakly convergent subsequence $\{(\Te_{n_k},\ue_{n_k})\}_{k \geq 0}$ such that $(\Te_{n_k},\ue_{n_k}) \rightharpoonup (\tilde \Te, \tilde \ue)$ in $H^1_0(\varpi,\Omega) \times \bH^1_0(\Omega)$ as $k \uparrow \infty$. The previous discussion shows that, for every $r \in H^1_0(\varpi^{-1},\Omega)$, we have
\[
 \int_{\Omega} \kappa(\theta_{n_k})\GRAD \Te_{n_k} \cdot \GRAD r \diff x \to  \int_{\Omega} \kappa(\theta)\GRAD \tilde \Te \cdot \GRAD r \diff x, \quad k \uparrow \infty.
\]
%almost everywhere. 
Similar arguments for the remaining terms that comprise the definition of $\frakF$ show that, in the limit, we must have $(\tilde \Te, \tilde \ue) = \frakF(\theta, \bu)$. Consequently, $(\tilde \Te, \tilde \ue) = (\Te,\ue)$. Since problem \eqref{eq:first_eqn_aux}--\eqref{eq:third_eqn_aux} admits a unique solution, any convergent subsequence converges to the same limit, which implies that the whole sequence must do so to $(\Te,\ue)$.
\end{proof}

We now proceed to obtain existence via a fixed point argument.

\begin{theorem}[existence]
\label{thm:existenceFull}
Assume that $\Lambda(\kappa) \geq \Lambda_0$, where $\Lambda_0$ is defined in Proposition~\ref{prop:weightedMeyers}. If the heat source $\calH \in H^{-1}(\varpi,\Omega)$ satisfies \eqref{eq:assump_heat_source},
%the estimate
%\[
%  \| \calH \|_{H^{-1}(\varpi,\Omega)} \leq \frac{S}{C_H(1/2)},
%\]
then there is a $(\ue,\pe,\Te) \in \bH^1_0(\Omega) \times L^2_0(\Omega) \times H^1_0(\varpi,\Omega)$ that solves \eqref{eq:BousiStrong} in the sense of Definition~\ref{def:solution}. Moreover, we have that $\ue \in \frakB_\bu $ and $T \in \frakB_T$.
\end{theorem}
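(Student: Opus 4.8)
The plan is to apply a Schauder-type fixed point theorem to the map $\frakF$ on the set $\frakB$. By Proposition~\ref{prop:Fwd}, under the stated hypotheses $\frakF$ is well-defined on $\frakB$ and maps $\frakB$ into itself. The set $\frakB = \frakB_T \times \frakB_\bu$ is a bounded, closed, convex subset of the reflexive Banach space $H^1_0(\varpi,\Omega) \times \bH^1_0(\Omega)$; hence, equipped with the weak topology, it is a convex, compact subset of a locally convex topological vector space (a closed bounded convex set in a reflexive space is weakly compact by Kakutani's theorem, and weakly metrizable when the space is separable, which it is here). Lemma~\ref{lem:frakFwcont} establishes that $\frakF$ is weakly continuous on $\frakB$. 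Therefore, the Schauder fixed point theorem (in its version for locally convex spaces, i.e. the Schauder--Tychonoff theorem) applies and yields a fixed point $(\Te,\ue) \in \frakB$ with $\frakF(\Te,\ue) = (\Te,\ue)$.

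First I would verify the hypotheses of the fixed point theorem explicitly: reflexivity and separability of the weighted spaces (so that bounded weakly closed sets are weakly sequentially compact and the weak topology on $\frakB$ is metrizable, allowing the use of the sequential weak continuity from Lemma~\ref{lem:frakFwcont}); convexity and norm-closedness of $\frakB_T$ and $\frakB_\bu$, hence their weak closedness; and that $\frakF(\frakB)\subset\frakB$. Then I would invoke Schauder--Tychonoff to produce $(\Te,\ue)$. The next step is to recover the pressure: by the definition of $\frakF$, associated to $\ue$ (with prescribed temperature $\theta = \Te$) there is, via Theorem~\ref{thm:existNSEsimp}, a unique $\pe \in L^2_0(\Omega)$ such that $(\ue,\pe)$ solves \eqref{eq:first_eqn_aux}--\eqref{eq:second_eqn_aux} with $\theta$ replaced by $\Te$. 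Since $(\Te,\ue)$ is a fixed point, equations \eqref{eq:first_eqn_aux}, \eqref{eq:second_eqn_aux}, \eqref{eq:third_eqn_aux} with $\theta = \Te$ and $\bu = \ue$ are exactly the three equations of \eqref{eq:BusiWeak}; thus $(\ue,\pe,\Te)$ is a weak solution in the sense of Definition~\ref{def:solution}. Finally, the bounds $\ue \in \frakB_\bu$ and $\Te \in \frakB_T$ are immediate since the fixed point lies in $\frakB$.

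I expect the main obstacle to be the topological bookkeeping needed to legitimately apply a fixed point theorem in the weak topology: specifically, reconciling the \emph{sequential} weak continuity proven in Lemma~\ref{lem:frakFwcont} with the topological hypotheses of Schauder--Tychonoff. The clean way is to note that $H^1_0(\varpi,\Omega)$ and $\bH^1_0(\Omega)$ are separable reflexive Banach spaces, so the weak topology restricted to the bounded set $\frakB$ is metrizable; on a metrizable space, sequential continuity equals continuity and sequential compactness equals compactness, so the weak version of Schauder's theorem applies directly. An alternative, avoiding metrizability subtleties altogether, is to instead work with a \emph{compact} map in the norm topology: by the reverse Hölder / compact embedding arguments already used in the proof of Lemma~\ref{lem:frakFwcont} and in Proposition~\ref{prop:Fredholm}, one checks that $\frakF$ is norm-to-norm continuous and that $\frakF(\frakB)$ is precompact in $H^1_0(\varpi,\Omega)\times\bH^1_0(\Omega)$ — this requires a little care, since $\frakF$ itself is not obviously compact (the Navier--Stokes step is not a compact perturbation), so the metrizable-weak-topology route is the more economical one. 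Everything else — checking that a fixed point of $\frakF$ is precisely a weak solution, and the a priori bounds — is routine given Proposition~\ref{prop:Fwd}.
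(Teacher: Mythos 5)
Your proposal is correct, but it takes a genuinely different route from the paper. The paper invokes the Schauder fixed point theorem in the \emph{norm} topology, which forces it to establish that $\frakF$ is compact; the bulk of its proof is therefore devoted to upgrading the weak continuity of Lemma~\ref{lem:frakFwcont} to weak--strong continuity, by writing the equation satisfied by $\ee_{\Te,n}=\Te-\Te_n$, recognizing it as a heat equation with convection, and applying Proposition~\ref{prop:Fredholm} together with the compact embeddings to conclude $\Te_n\to\Te$ strongly (and similarly for $\ue_n$). You instead work in the weak topology: $\frakB$ is a nonempty convex set that is weakly compact (Kakutani) and, being a bounded subset of a separable Hilbert-type product space, weakly metrizable, so the sequential weak continuity of Lemma~\ref{lem:frakFwcont} is exactly the continuity needed for the Schauder--Tychonoff theorem. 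This is legitimate and strictly more economical --- it renders the entire second half of the paper's proof unnecessary --- and your diagnosis that the Navier--Stokes step is the delicate part of the compactness argument is accurate (the paper dismisses it with ``a similar argument''). What the paper's longer route buys is the weak-to-strong continuity statement itself, which is in the same spirit as the compactness machinery reused later for the convergence of the discrete scheme in Theorem~\ref{thm:bigConv}. Your handling of the remaining bookkeeping --- recovering $\pe$ from Theorem~\ref{thm:existNSEsimp}, checking that the fixed-point equations coincide with \eqref{eq:BusiWeak}, and reading off $\ue\in\frakB_\bu$, $\Te\in\frakB_T$ --- is correct and is in fact more explicit than what the paper records.
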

\begin{proof}
We wish to invoke the Leray--Schauder fixed point theorem \cite[Theorem 8.8]{MR787404} for the map $\frakF$ over $\frakB = \frakB_T \times \frakB_\bu$, where $\frakB_\bu$ and $\frakB_T$ are defined in \eqref{def:B_u_and_G} and \eqref{def:B_T_and_S}, respectively. Notice that $\frakB$ is nonempty, closed, bounded, and convex. Since Proposition~\ref{prop:Fwd} already showed that $\frakF(\frakB) \subset \frakB$, it remains to show the compactness of $\frakF$. In other words, we must improve on Lemma~\ref{lem:frakFwcont} by showing the weak--strong continuity of $\frakF$. To accomplish this task, let $\{\theta_n\}_{n \geq 0} \subset \frakB_T$ and $\{\bu_n\}_{n \geq 0} \subset \frakB_\bu$ be such that $(\theta_n,\bu_n) \rightharpoonup (\theta,\bu) \in \frakB$, in $H^1_0(\varpi,\Omega) \times \bH^1_0(\Omega)$, as $n \uparrow \infty$. We already now, via Lemma~\ref{lem:frakFwcont}, that $(\Te_n,\ue_n) = \frakF(\theta_n,\bu_n) \rightharpoonup \frakF(\theta,\bu) = (\Te,\ue)$, in $H^1_0(\varpi,\Omega) \times \bH^1_0(\Omega)$.

Let $r \in H^1_0(\varpi^{-1},\Omega)$. Invoke the problems that $(\Te_n,\ue_n)$ and $ (\Te,\ue)$ satisfy and observe that the difference $\ee_{\Te,n} := \Te-\Te_n$ verifies the relation
%Take the difference between the equations that define $T_n$ and $T$ to obtain that the difference $e_{T,n} = T-T_n$ satisfies
\[
  \int_\Omega \left( \kappa(\theta_n)\GRAD \ee_{\Te,n} - \ee_{\Te,n} \ue \right)\cdot \GRAD r \diff x = \int_\Omega \left( \Te_n(\ue-\ue_n) + \left( \kappa(\theta_n) - \kappa(\theta)\right)\GRAD \Te  \right) \cdot \GRAD r \diff x,
\]
\ie $\ee_{\Te,n}$ is the solution to a heat equation with convection; the problem that was studied in \S\ref{sub:heat}. Let us denote the functional on the right hand side of this expression by $\calH_n$. Since $\ue \in \frakB_\ue$ and $\Lambda(\kappa) \geq \Lambda_0$, we can invoke Proposition~\ref{prop:Fredholm} to conclude that
\[
  \| \GRAD \ee_{\Te,n} \|_{\bL^2(\varpi,\Omega)} \leq C_H(1/2) \| \calH_n \|_{H^{-1}(\varpi,\Omega)}.
\]
The arguments that led to \eqref{eq:estimateBu} show the existence of $m<2d$ and $\ell > 2d/(d-1)$ such that
\[
  \sup_{r \in H^1_0(\varpi^{-1},\Omega)} \frac{\int_\Omega \Te_n(\ue-\ue_n)\cdot \GRAD r \diff x}{\| \GRAD r \|_{\bL^2(\varpi^{-1},\Omega)}} \lesssim \| \GRAD \Te_n \|_{\bL^2(\varpi,\Omega)} \| \ue - \ue_n \|_{\bL^m(\Omega)} \to 0,
\]
where we have also used the compact embedding $H^1(\Omega) \hookrightarrow L^{m}(\Omega)$. For the second term we observe that, since $\kappa$ is continuous, and hence bounded, we have that $( \kappa(\theta_n) - \kappa(\theta))\GRAD \Te \to \boldsymbol0$ in $\bL^2(\varpi,\Omega)$.

In conclusion, $\Te_n \to \Te$ in $H^1_0(\varpi,\Omega)$. A similar argument shows that $\ue_n \to \ue $ in $\bH^1_0(\Omega)$. The theorem is thus proved.
\end{proof}

\section{Discretization and convergence}
\label{sec:Discrete}

Let us now study a finite--element--like scheme to approximate the solution of \eqref{eq:BousiStrong}. To that effect, we assume that we have at hand, for each $h>0$, finite dimensional spaces $W_h \subset H^1_0(\varpi,\Omega) \cap H^1_0(\varpi^{-1},\Omega)$, $\bX_h \subset \bH^1_0(\Omega)$, and $M_h \subset L^2_0(\Omega)$ that are dense in the limit. Moreover, we assume that the pair $(\bX_h, M_h)$ is compatible, in the sense that there is a constant $\beta>0$ such that, for all $h>0$,
\begin{equation}
\label{eq:LBB}
  \beta \| q_h \|_{L^2(\Omega)} \leq \sup_{\bv_h \in \bX_h} \frac{ \int_\Omega \DIV \bv_h q_h \diff x}{ \| \GRAD \bv_h \|_{\bL^2(\Omega)}} \quad \forall q_h \in M_h.
\end{equation}
We also assume that the $H^1_0(\Omega)$ projection onto $W_h$ is $H^1_0(\varpi^{\pm1},\Omega)$ stable. In other words, there is a constant $\gamma>0$ such that, for all $h>0$,
\begin{equation}
\label{eq:RitzStabWeight}
  \gamma \| \GRAD r_h \|_{\bL^2(\varpi^{\pm1},\Omega)} \leq \sup_{\theta_h \in W_h} \frac{ \int_\Omega \GRAD r_h \cdot \GRAD \theta_h \diff x}{\| \GRAD \theta_h\|_{\bL^2(\varpi^{\mp1},\Omega)}} \quad \forall r_h \in W_h.
\end{equation}
Finally, we assume that there is an interpolation operator $\pi_W : H^1_0(\varpi^{-1},\Omega) \to W_h$ which is stable and has suitable approximation properties: For all $r \in H^1_0(\varpi^{-1},\Omega)$, we have
\begin{equation}
\label{eq:piWstableandApprox}
  \| \GRAD \pi_W r \|_{\bL^2(\varpi^{-1},\Omega)} \lesssim \| \GRAD r \|_{\bL^2(\varpi^{-1},\Omega)}, \quad \| \GRAD (\pi_W r-r) \|_{\bL^2(\varpi^{-1},\Omega)} \overset{h\to0}{\longrightarrow} 0.
\end{equation}

Examples of triples verifying our assumptions are plentiful within the finite element literature. Pairs that satisfy \eqref{eq:LBB} can be found, for instance, in \cite{CiarletBook,MR2050138,MR851383}. In addition, \cite{DO:17} shows that if $\Omega$ is convex, and $W_h$ consists of continuous functions that are piecewise polynomials of degree $k\geq1$ over a quasiuniform mesh of $\Omega$ of size $h$, then \eqref{eq:RitzStabWeight} holds. Finally, in this setting, \cite{NOS3} constructs interpolants that satisfy \eqref{eq:piWstableandApprox}.

As in the continuous case, we will say that a triple $(\ue_h,\pe_h,\Te_h) \in \bX_h \times M_h \times W_h$ is a discrete solution to \eqref{eq:BousiStrong} if
\begin{equation}
\label{eq:Busih}
  \begin{dcases}
    \int_\Omega \left( \nu(\Te_h) \GRAD \ue_h : \GRAD \bv_h + (\ue_h\cdot \GRAD \ue_h )\cdot \bv_h + \frac12 \DIV \ue_h \ue_h \cdot \bv_h \right. \\
    \qquad\qquad \left. - \pe_h \DIV \bv_h - \Te_h\bg \cdot \bv_h \right) \diff x = 0,\\
    \int_\Omega q_h \DIV \ue_h \diff x = 0,\\
    \int_\Omega \left( \kappa(\Te_h) \GRAD \Te_h \cdot \GRAD r_h - \Te_h\ue_h\cdot\GRAD r_h \right) \diff x = \langle \calH, r_h \rangle,
  \end{dcases}
\end{equation}
for all $\bv_h \in \bX_h$, $ q_h \in M_h$, and $r_h \in W_h$. Our main objective here will be to show that, under similar assumptions to Theorem~\ref{thm:existenceFull}, problem \eqref{eq:Busih} always has a solution and that, as $h \to 0$, these solutions weakly converge, up to subsequences, to a solution of \eqref{eq:BousiStrong} in the sense of Definition~\ref{def:solution}.

\subsection{A discrete stationary heat equation with variable coefficient}
\label{sub:discrMeyers}

As a first step to achieve our goals we must prove a discrete version of Proposition~\ref{prop:weightedMeyers}. The proof of the following result is, essentially, an adaption of \cite[Proposition 8.6.2]{MR2373954}.

\begin{proposition}[weighted stability]
\label{prop:discrMeyers}
If $\varkappa \in L^\infty(\Omega)$ is such that $0<\varkappa_-\leq \varkappa \leq \varkappa_+$ and
\begin{equation}
  \Lambda(\varkappa) \geq \Lambda_1 := \max\left\{ \Lambda_0, 1-\gamma\left(1-\frac1{2\varkappa_+} \right)\right\},
  \label{eq:Lambda_1}
\end{equation}
then, for all $h>0$, we have that
\begin{equation}
\label{eq:RitzStabWeight_kappa}
  \frac\gamma2 \| \GRAD r_h \|_{\bL^2(\varpi^{\pm1},\Omega)} \leq \sup_{\theta_h \in W_h} \frac{ \int_\Omega \varkappa \GRAD r_h \cdot \GRAD \theta_h \diff x}{\| \GRAD \theta_h\|_{\bL^2(\varpi^{\mp1},\Omega)}} \quad \forall r_h \in W_h,
\end{equation}
where $\gamma>0$ is the constant appearing in estimate \eqref{eq:RitzStabWeight}.
\end{proposition}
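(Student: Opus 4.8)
The plan is to mimic the continuous inf--sup estimate \eqref{eq:inf-sup-for-heat} from Proposition~\ref{prop:weightedMeyers}, but now replacing the exact Ritz projection by its discrete counterpart on $W_h$, and to exploit the discrete stability \eqref{eq:RitzStabWeight}. Fix a sign choice; say we wish to prove the estimate with $\varpi^{+1}$ on the left and $\varpi^{-1}$ on the right (the other sign is symmetric). Write $\varkappa = \tfrac{1}{2\varkappa_+}^{-1}\cdot$ — more precisely, decompose the bilinear form $a_\varkappa(r_h,\theta_h):=\int_\Omega \varkappa \GRAD r_h\cdot\GRAD\theta_h\diff x$ as
\[
  a_\varkappa(r_h,\theta_h) = \frac1{2\varkappa_+}\int_\Omega \GRAD r_h\cdot\GRAD\theta_h \diff x + \int_\Omega\left(\varkappa - \frac1{2\varkappa_+}\right)\GRAD r_h\cdot\GRAD\theta_h\diff x,
\]
noting that the coefficient $\varkappa - \tfrac1{2\varkappa_+}$ is bounded, measurable, and — crucially — pointwise bounded above by $\varkappa_+ - \tfrac1{2\varkappa_+}$ and, since $\Lambda(\varkappa)\geq\Lambda_1$, controlled below in a way that will let us absorb it. The first term is the unweighted Dirichlet form, for which \eqref{eq:RitzStabWeight} gives the clean lower bound $\gamma\|\GRAD r_h\|_{\bL^2(\varpi^{+1},\Omega)}$ when we test against the appropriately scaled maximizer $\theta_h$.

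Concretely, I would proceed as follows. Given $r_h\in W_h$, let $\theta_h\in W_h$ be (a normalization of) the element realizing the supremum in \eqref{eq:RitzStabWeight}, normalized so that $\|\GRAD\theta_h\|_{\bL^2(\varpi^{-1},\Omega)}=\|\GRAD r_h\|_{\bL^2(\varpi^{+1},\Omega)}$ and $\int_\Omega\GRAD r_h\cdot\GRAD\theta_h\diff x\geq \gamma\|\GRAD r_h\|_{\bL^2(\varpi^{+1},\Omega)}^2$. Then evaluate $a_\varkappa(r_h,\theta_h)$ using the decomposition above: the first piece contributes at least $\tfrac{\gamma}{2\varkappa_+}\|\GRAD r_h\|_{\bL^2(\varpi^{+1},\Omega)}^2$, while the second piece is estimated by Cauchy--Schwarz with the weight split $1 = \varpi^{1/2}\varpi^{-1/2}$, giving an upper bound of $\bigl(\varkappa_+ - \tfrac1{2\varkappa_+}\bigr)\|\GRAD r_h\|_{\bL^2(\varpi^{+1},\Omega)}\|\GRAD\theta_h\|_{\bL^2(\varpi^{-1},\Omega)}$ in absolute value — but this needs care, since the oscillation coefficient is only controlled by $\varkappa_+(1-\Lambda(\varkappa))$ via $\varkappa - \tfrac1{2\varkappa_+}\in[\varkappa_- - \tfrac1{2\varkappa_+}, \varkappa_+ - \tfrac1{2\varkappa_+}]$, whose absolute value is at most $\varkappa_+(1-\Lambda(\varkappa)/1)$-type quantity; one then checks that the hypothesis $\Lambda(\varkappa)\geq 1-\gamma(1-\tfrac1{2\varkappa_+})$ is exactly what makes $\tfrac{\gamma}{2\varkappa_+} - (\text{oscillation bound}) \geq \tfrac\gamma2$ after dividing through by $\|\GRAD r_h\|_{\bL^2(\varpi^{+1},\Omega)}\|\GRAD\theta_h\|_{\bL^2(\varpi^{-1},\Omega)}$. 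Finally divide by $\|\GRAD\theta_h\|_{\bL^2(\varpi^{-1},\Omega)}$ to obtain \eqref{eq:RitzStabWeight_kappa}. The requirement $\Lambda(\varkappa)\geq\Lambda_0$ is carried along only because $\Lambda_1\geq\Lambda_0$ and because the continuous result feeding into the hypotheses of later propositions needs it; it is not used in this discrete argument itself.

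The main obstacle, as in the continuous Meyers-type argument of \cite{OS:17infsup} that this adapts, is getting the bookkeeping on the coefficient oscillation exactly right so that the threshold $\Lambda_1$ in \eqref{eq:Lambda_1} comes out precisely as stated — in particular handling the fact that $\varkappa$ is only measurable (not a perturbation of a constant in any smooth sense) and that the natural normalizing constant is $\tfrac1{2\varkappa_+}$ rather than, say, the midpoint $\tfrac12(\varkappa_-+\varkappa_+)$; one must verify that with this choice the worst-case bound on $|\varkappa - \tfrac1{2\varkappa_+}|$ is what enters, and that the algebra $1-\Lambda_1 = \gamma(1-\tfrac1{2\varkappa_+})$ translates into the absorption inequality $\tfrac{\gamma}{2\varkappa_+} - \varkappa_+(1-\Lambda(\varkappa)) \geq \tfrac\gamma2$ (or the correct analogue). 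A secondary, more routine point is to make sure the Cauchy--Schwarz splitting with $\varpi^{\pm 1/2}$ is legitimate, i.e.\ that $\GRAD r_h \in \bL^2(\varpi,\Omega)$ and $\GRAD\theta_h\in\bL^2(\varpi^{-1},\Omega)$, which holds because $W_h\subset H^1_0(\varpi,\Omega)\cap H^1_0(\varpi^{-1},\Omega)$ by assumption and $\varpi\in A_2$ with $\varpi^{-1}\in A_1\subset A_2$ guarantees both weighted norms are finite on the finite-dimensional space $W_h$.
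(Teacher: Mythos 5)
Your overall strategy (perturb off the unweighted Dirichlet form, invoke the discrete stability \eqref{eq:RitzStabWeight}, absorb the oscillation using the lower bound on $\Lambda(\varkappa)$) is the right one, and is the paper's. But the decomposition you commit to is wrong. You split
\[
  \int_\Omega \varkappa \GRAD r_h\cdot\GRAD\theta_h \diff x = \frac1{2\varkappa_+}\int_\Omega \GRAD r_h\cdot\GRAD\theta_h \diff x + \int_\Omega\Bigl(\varkappa - \frac1{2\varkappa_+}\Bigr)\GRAD r_h\cdot\GRAD\theta_h\diff x,
\]
and claim that $\bigl|\varkappa - \tfrac1{2\varkappa_+}\bigr|$ is controlled by a quantity of the form $\varkappa_+(1-\Lambda(\varkappa))$. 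That is false: $\tfrac1{2\varkappa_+}$ has the wrong scaling to serve as a centering constant for $\varkappa$, and the interval $[\varkappa_- - \tfrac1{2\varkappa_+},\ \varkappa_+ - \tfrac1{2\varkappa_+}]$ has \emph{width} $\varkappa_+(1-\Lambda(\varkappa))$ but is in general nowhere near the origin. For instance, if $\varkappa\equiv\varkappa_+=10$ then $\Lambda(\varkappa)=1$, so the perturbation should vanish, yet $\bigl|\varkappa-\tfrac1{2\varkappa_+}\bigr|=9.95$. With your decomposition the ``small'' term is not small, the absorption step $\tfrac{\gamma}{2\varkappa_+}-(\text{oscillation bound})\geq\tfrac\gamma2$ fails (it would already require $\varkappa_+\leq 1$ even with a zero oscillation bound), and the threshold $\Lambda_1$ does not come out.

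The fix is to center at $\varkappa_+$ itself, not at $\tfrac1{2\varkappa_+}$. The paper writes
\[
  \int_\Omega  \GRAD r \cdot \GRAD \theta \diff x = \int_\Omega \Bigl( 1 - \frac\varkappa{\varkappa_+} \Bigr)\GRAD r \cdot \GRAD \theta \diff x + \frac1{\varkappa_+} \int_\Omega \varkappa \GRAD r \cdot \GRAD \theta \diff x,
\]
where now $0\leq 1-\varkappa/\varkappa_+\leq 1-\Lambda(\varkappa)$ pointwise, so the perturbation term is genuinely bounded by $(1-\Lambda(\varkappa))\|\GRAD r\|_{\bL^2(\varpi^{\pm1},\Omega)}\|\GRAD\theta\|_{\bL^2(\varpi^{\mp1},\Omega)}$ (this is the Cauchy--Schwarz step with the $\varpi^{\pm 1/2}$ split, which you correctly identify as legitimate). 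Combining with \eqref{eq:RitzStabWeight} gives $\left(\gamma+\Lambda(\varkappa)-1\right)\|\GRAD r_h\|_{\bL^2(\varpi^{\pm1},\Omega)}\leq \tfrac1{\varkappa_+}\sup_{\theta_h}(\cdots)$, and the constant $\tfrac1{2\varkappa_+}$ enters only at this final stage: the hypothesis $\Lambda(\varkappa)\geq 1-\gamma(1-\tfrac1{2\varkappa_+})$ is exactly $\gamma+\Lambda(\varkappa)-1\geq \tfrac{\gamma}{2\varkappa_+}$, and multiplying through by $\varkappa_+$ yields the constant $\tfrac\gamma2$ in \eqref{eq:RitzStabWeight_kappa}. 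Your remaining remarks (that $\Lambda_0$ plays no role in the discrete argument, and that the weighted norms are finite on $W_h$) are correct, and your device of testing with a normalized near-maximizer of \eqref{eq:RitzStabWeight} is an acceptable, equivalent alternative to working directly with suprema as the paper does.
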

\begin{proof}
As mentioned above, the proof essentially follows the perturbation argument developed in \cite[Proposition 8.6.2]{MR2373954}. Let us define the bilinear form
\[
B:  H^1_0(\varpi^{\pm1},\Omega) \times H^1_0(\varpi^{\mp1},\Omega) \rightarrow \mathbb{R},
 \quad
  B(r,\theta) := \int_\Omega \left( 1 - \frac\varkappa{\varkappa_+} \right)\GRAD r \cdot \GRAD \theta \diff x.
\]
Note that, for $r \in H^1_0(\varpi^{\pm1},\Omega)$ and $\theta \in H^1_0(\varpi^{\mp1},\Omega)$,
\[
  |B(r,\theta)| \leq (1-\Lambda(\varkappa)) \| \GRAD r \|_{\bL^2(\varpi^{\pm1},\Omega)} \| \GRAD \theta \|_{\bL^2(\varpi^{\mp1},\Omega)},
\]
and that
\[
  \int_\Omega  \GRAD r \cdot \GRAD \theta \diff x = B(r,\theta) + \frac1{\varkappa_+} \int_\Omega \varkappa \GRAD r \cdot \GRAD \theta \diff x.
\]
Thus, owing to \eqref{eq:RitzStabWeight} we have that, for any $h>0$ and any $r_h \in W_h$,
\[
  \left(\gamma + \Lambda(\varkappa)-1 \right) \| \GRAD r_h \|_{\bL^2(\varpi^{\pm1},\Omega)} \leq \frac1{\varkappa_+} \sup_{\theta_h \in W_h} \frac{ \int_\Omega \varkappa \GRAD r_h \cdot \GRAD \theta_h \diff x}{\| \GRAD \theta_h\|_{\bL^2(\varpi^{\mp1},\Omega)}}.
\]
The restriction on $\Lambda(\varkappa)$ allows us to conclude.
\end{proof}

With the previous result at hand, we can show that a discrete version of \eqref{eq:SimpHeat} always has a solution for sufficiently small convection and that, more importantly, the discrete solutions are uniformly bounded with respect to $h$. Given $\varkappa \in L^\infty(\Omega)$ with $0<\varkappa_-\leq \varkappa \leq \varkappa_+$, $\bu \in \bH^1_0(\Omega)$, and $\calH \in H^{-1}(\varpi,\Omega)$ we consider the following problem: Find $T_h \in W_h$ such that
\begin{equation}
\label{eq:discrHeat}
  \int_\Omega \left( \varkappa \GRAD T_h \cdot \GRAD r_h - T_h \bu \cdot \GRAD r_h \right) \diff x = \langle \calH, r_h \rangle \quad \forall r_h \in W_h.
\end{equation}

\begin{corollary}[well--posedness]
\label{cor:discrHeat}
Assume that $\Lambda(\varkappa) \geq \Lambda_1$, where $\Lambda_1$ is defined in \eqref{eq:Lambda_1}, and that $\bu \in \bH^1_0(\Omega)$ satisfies
\[
  \frac{2C_{e,2}}\gamma \| \GRAD \bu \|_{\bL^2(\Omega)} \leq q < 1.
\]
Then, for every $h>0$, problem \eqref{eq:discrHeat} has a unique solution. Moreover, $T_h$ satisfies
\[
  \| \GRAD T_h \|_{\bL^2(\varpi,\Omega)} \leq \frac2{\gamma(1-q)} \| \calH \|_{H^{-1}(\varpi,\Omega)}.
\]
\end{corollary}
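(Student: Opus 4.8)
The plan is to mimic, at the discrete level, the Fredholm/perturbation argument used to prove Proposition~\ref{prop:Fredholm}, but now the roles of the continuous inf--sup estimate \eqref{eq:inf-sup-for-heat} and Proposition~\ref{prop:weightedMeyers} are played by the discrete inf--sup estimate \eqref{eq:RitzStabWeight_kappa} from Proposition~\ref{prop:discrMeyers}. Concretely, I would introduce the discrete operators $\calA_h, \calB_{\bu,h} : W_h \to W_h'$ (where $W_h'$ is identified with $W_h$ through the pairing against test functions in $W_h$), defined by $\langle \calA_h T_h, r_h\rangle = \int_\Omega \varkappa \GRAD T_h \cdot \GRAD r_h \diff x$ and $\langle \calB_{\bu,h} T_h, r_h\rangle = -\int_\Omega T_h \bu \cdot \GRAD r_h \diff x$. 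Since $W_h$ is finite dimensional, existence of a solution is equivalent to uniqueness, so the whole statement will follow once I show $\calA_h + \calB_{\bu,h}$ is injective on $W_h$ together with the claimed a priori bound.

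First I would record that, by Proposition~\ref{prop:discrMeyers} (applied with the plus sign in $\pm$, i.e. testing against $\theta_h \in W_h \subset H^1_0(\varpi^{-1},\Omega)$), $\calA_h$ is invertible with $\|\calA_h^{-1}\|_{\mathcal{L}(W_h',W_h)} \leq 2/\gamma$, where the norm on $W_h'$ is the one dual to $\|\GRAD \cdot\|_{\bL^2(\varpi^{-1},\Omega)}$ on $W_h$. Here I must be slightly careful: \eqref{eq:RitzStabWeight_kappa} is a statement about a sup over $W_h$, which is exactly a lower bound on $\|\calA_h r_h\|_{W_h'}$ in terms of $\|\GRAD r_h\|_{\bL^2(\varpi,\Omega)}$; combined with finite-dimensionality this gives the bounded inverse. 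Next, estimate \eqref{eq:boundedness_2}, or more precisely the inequality \eqref{eq:estimateBu} underlying it, restricted to $W_h$, yields $\|\calB_{\bu,h}\|_{\mathcal{L}(W_h,W_h')} \leq C_{e,2}\|\GRAD \bu\|_{\bL^2(\Omega)}$. Therefore
\[
  \| \calA_h^{-1}\calB_{\bu,h} \|_{\mathcal{L}(W_h)} \leq \frac{2C_{e,2}}{\gamma}\| \GRAD \bu \|_{\bL^2(\Omega)} \leq q < 1,
\]
so $I + \calA_h^{-1}\calB_{\bu,h}$ is invertible on $W_h$ by a Neumann series, which proves existence and uniqueness of $T_h$ solving \eqref{eq:discrHeat} rewritten as $(I + \calA_h^{-1}\calB_{\bu,h})T_h = \calA_h^{-1}\calH$.

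Finally, for the a priori bound I would write
\[
  \| \GRAD T_h \|_{\bL^2(\varpi,\Omega)}
  \leq \frac{\| \calA_h^{-1} \|_{\mathcal{L}(W_h',W_h)}}{1 - \| \calA_h^{-1}\calB_{\bu,h}\|_{\mathcal{L}(W_h)}}\, \| \calH \|_{H^{-1}(\varpi,\Omega)}
  \leq \frac{2/\gamma}{1-q}\| \calH \|_{H^{-1}(\varpi,\Omega)},
\]
using that $\| \calH \|_{W_h'} \leq \| \calH \|_{H^{-1}(\varpi,\Omega)}$ since $W_h \subset H^1_0(\varpi^{-1},\Omega)$. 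This is exactly the discrete analogue of the computation at the end of the proof of Proposition~\ref{prop:Fredholm}. The only genuine subtlety — and the step I would be most careful about — is bookkeeping the two dual norms on $W_h$ correctly: the relevant duality is between $\|\GRAD\cdot\|_{\bL^2(\varpi,\Omega)}$ on the trial side and $\|\GRAD\cdot\|_{\bL^2(\varpi^{-1},\Omega)}$ on the test side, so that \eqref{eq:RitzStabWeight_kappa} reads as a lower bound for $\calA_h$ and \eqref{eq:estimateBu} as an upper bound for $\calB_{\bu,h}$ with respect to the \emph{same} pair of norms; once this is set up consistently, everything else is the finite-dimensional Neumann-series argument and requires no new ideas. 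One should also note in passing that the hypothesis $\Lambda(\varkappa)\geq\Lambda_1\geq\Lambda_0$ is what licenses the use of Proposition~\ref{prop:discrMeyers}, and that $W_h \subset H^1_0(\varpi^{-1},\Omega)$ is exactly what makes $\langle\calH,\cdot\rangle$ meaningful on $W_h$.
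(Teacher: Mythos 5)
Your proposal is correct and is essentially the paper's own argument: the paper's proof consists of the single instruction to repeat the proof of Proposition~\ref{prop:Fredholm} \emph{verbatim}, replacing the continuous inf--sup estimate \eqref{eq:inf-sup-for-heat} by the discrete one \eqref{eq:RitzStabWeight_kappa} and the constant $C_\varkappa$ by $2/\gamma$, which is exactly the operator-theoretic Neumann-series argument you spell out. Your extra care about the pairing of the $\varpi$- and $\varpi^{-1}$-weighted norms is the right bookkeeping and matches the intended reading.
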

\begin{proof}
Repeat \emph{verbatim} the proof of Proposition~\ref{prop:Fredholm} replacing \eqref{eq:inf-sup-for-heat} by \eqref{eq:RitzStabWeight_kappa} and $C_\varkappa$ by $2/\gamma$.
\end{proof}

\subsection{Existence and stability}
\label{sub:ExistenceDiscr}

Having studied a discrete diffusion equation with variable coefficient on weighted spaces, we can proceed and show that, under similar assumptions to Theorem~\ref{thm:existenceFull}, our discrete problem \eqref{eq:Busih}, always has solutions and that, moreover, these are uniformly bounded with respect to $h>0$. This will be the first step to show, via a compactness argument, the convergence of discrete solutions to a solution of \eqref{eq:BousiStrong}, in the sense of Definition~\ref{def:solution}. 

We proceed via a fixed point argument. We define, for each $h>0$, the map 
\[
\frakF_h : W_h \times \bX_h \to W_h \times \bX_h, \quad (\theta_h,\bu_h) \mapsto \frakF_h(\theta_h,\bu_h) = (\Te_h,\ue_h)
\]
by the following procedure: Let the pair $(\ue_h,\pe_h) \in \bX_h \times M_h$ be a solution to
\begin{equation}
\label{eq:defofDiscrFrakF1}
  \begin{dcases}
    \int_\Omega \left( \nu(\theta_h) \GRAD \ue_h : \GRAD \bv_h + (\ue_h\cdot\GRAD) \ue_h \cdot \bv_h + \frac12 \DIV \ue_h \ue_h \cdot \bv_h \right) \diff x  \\
      \qquad \qquad -\int_\Omega \pe_h \DIV \bv_h \diff x = \int_\Omega \theta_h\bg \cdot \bv_h \diff x & \forall \bv_h \in \bX_h, \\
    \int_\Omega q_h \DIV \ue_h \diff x = 0 & \forall q_h \in M_h,
  \end{dcases}
\end{equation}
then, $\Te_h \in W_h$ is found as the solution of
\begin{equation}
\label{eq:defofDiscrFrakF2}
\int_\Omega \left( \kappa(\theta_h) \GRAD \Te_h \cdot \GRAD r_h - \Te_h \ue_h \cdot \GRAD r_h \right) \diff x = \langle \calH, r_h \rangle \quad \forall r_h \in W_h.
\end{equation}

For reasons similar to the continuous case, this map is well--defined, if we restrict it to a ball of appropriate size. To quantify that, we introduce
\begin{align*}
  \frakB_\bu^h  &= \left\{ \bu_h \in \bX_h: \| \GRAD \bu_h \|_{\bL^2(\Omega)} \leq \tilde G \right\}, 
  \quad
  \tilde G = \frac\gamma{4 C_{e,2} }, 
  \\
  \frakB_T^h &= \left\{ \theta_h \in W_h: \| \GRAD \theta_h \|_{\bL^2(\varpi,\Omega)} \leq \tilde S \right\}, 
  \quad
 \tilde S = \frac{\nu_-}{gC_{e,1}} \min \left\{ \frac{\nu_-}{C_P}, \frac\gamma{4 C_{e,2}} \right\},
\end{align*}
and $\frakB^h = \frakB_T^h \times \frakB_\bu^h$. Notice that neither $\tilde S$ nor $\tilde G$ are dependent on the parameter $h>0$.

\begin{proposition}[$\frakF_h$ is well--defined]
\label{prop:frakFhmakessense}
Assume that $\Lambda(\kappa) \geq \Lambda_1$, where $\Lambda_1$ is defined in Proposition~\ref{prop:discrMeyers}. If the heat source $\calH \in H^{-1}(\varpi,\Omega)$ satisfies the estimate
\[
  \| \calH \|_{H^{-1}(\varpi,\Omega)} \leq \frac{\gamma\tilde S}4,
\]
then, for every $h>0$, the mapping $\frakF_h$ is well--defined on $\frakB^h$. Moreover, $\frakF_h(\frakB^h) \subset \frakB^h$.
\end{proposition}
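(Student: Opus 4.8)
The plan is to mirror, at the discrete level, the argument used in Proposition~\ref{prop:Fwd}, replacing the continuous ingredients by their finite-dimensional counterparts. Fix $h>0$ and let $(\theta_h,\bu_h) \in \frakB^h = \frakB_T^h \times \frakB_\bu^h$. First I would address the Navier--Stokes-like step \eqref{eq:defofDiscrFrakF1}: since $\theta_h \in \frakB_T^h \subset H^1_0(\varpi,\Omega)$, the functional $\bv_h \mapsto \int_\Omega \theta_h \bg \cdot \bv_h \diff x$ is bounded on $\bX_h$ by $C_{e,1} g \|\GRAD\theta_h\|_{\bL^2(\varpi,\Omega)}$ thanks to \eqref{eq:boundedness_1}. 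The skew-symmetrizing term $\tfrac12 \DIV\ue_h \, \ue_h\cdot\bv_h$ makes the trilinear form $c(\ue_h;\ue_h,\ue_h)$ vanish on the diagonal, so the standard Brouwer-type existence argument for finite-dimensional Navier--Stokes problems (e.g. as in \cite[Chapter II]{MR1846644} or \cite[Chapter IV]{MR851383}) applies: there is a solution $(\ue_h,\pe_h)\in \bX_h\times M_h$, and testing with $\bv_h = \ue_h$ and using the LBB condition \eqref{eq:LBB} yields the a priori bound
\[
  \|\GRAD \ue_h\|_{\bL^2(\Omega)} \leq \frac{C_{e,1}g}{\nu_-}\|\GRAD\theta_h\|_{\bL^2(\varpi,\Omega)} \leq \frac{C_{e,1}g}{\nu_-}\,\tilde S \leq \frac{C_{e,1}g}{\nu_-}\cdot\frac{\nu_-}{gC_{e,1}}\cdot\frac{\gamma}{4C_{e,2}} = \tilde G,
\]
so $\ue_h \in \frakB_\bu^h$.

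Next I would handle the heat step \eqref{eq:defofDiscrFrakF2}. Since $\Lambda(\kappa)\geq\Lambda_1$ and $\ue_h\in\frakB_\bu^h$ gives $\tfrac{2C_{e,2}}{\gamma}\|\GRAD\ue_h\|_{\bL^2(\Omega)} \leq \tfrac{2C_{e,2}}{\gamma}\tilde G = \tfrac12$, Corollary~\ref{cor:discrHeat} applies with $q = 1/2$: problem \eqref{eq:defofDiscrFrakF2} has a unique solution $\Te_h\in W_h$, and it satisfies
\[
  \|\GRAD\Te_h\|_{\bL^2(\varpi,\Omega)} \leq \frac{2}{\gamma(1-1/2)}\|\calH\|_{H^{-1}(\varpi,\Omega)} = \frac{4}{\gamma}\|\calH\|_{H^{-1}(\varpi,\Omega)} \leq \frac{4}{\gamma}\cdot\frac{\gamma\tilde S}{4} = \tilde S,
\]
where the last inequality uses the hypothesis on $\calH$. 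Hence $\Te_h \in \frakB_T^h$, and therefore $\frakF_h(\theta_h,\bu_h) = (\Te_h,\ue_h)\in\frakB^h$, which proves both $\frakF_h$ is well defined on $\frakB^h$ and the invariance $\frakF_h(\frakB^h)\subset\frakB^h$. Note that one should remark that, although the discrete Navier--Stokes problem need not have a unique solution under these assumptions, the map $\frakF_h$ is still well defined: any solution of \eqref{eq:defofDiscrFrakF1} satisfies the same a priori bound, so the subsequent heat solve is unaffected — alternatively, one fixes a selection. This subtlety does not arise in the continuous case because there Theorem~\ref{thm:existNSEsimp} gives uniqueness under the smallness condition $C_PC_{e,1}g\|\GRAD\theta\|_{\bL^2(\varpi,\Omega)} < \nu_-^2$, which $\tilde S$ is chosen to enforce via the factor $\nu_-/C_P$.

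The main obstacle, and the point that deserves the most care, is the discrete existence claim for \eqref{eq:defofDiscrFrakF1}: one must verify that the convective trilinear form with the added $\tfrac12\DIV\ue_h\,\ue_h\cdot\bv_h$ is genuinely skew-symmetric in its last two arguments on all of $\bX_h$ (not merely on discretely divergence-free functions), so that the energy estimate closes without needing $\bX_h$ to be pointwise solenoidal, and then to invoke a fixed-point/topological-degree argument in the finite-dimensional space $\bX_h\cap\{$discretely divergence-free$\}$ together with the LBB condition \eqref{eq:LBB} to recover the pressure. Everything else is a direct transcription of the continuous estimates with $C_\varkappa$ replaced by $2/\gamma$ and the constants $S, G$ replaced by $\tilde S, \tilde G$; since $\tilde S$ and $\tilde G$ are independent of $h$, so is the conclusion, which is exactly what is needed for the later compactness argument.
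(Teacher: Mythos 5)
Your argument is correct and follows essentially the same route as the paper: existence for the skew--symmetrized discrete Navier--Stokes step with the a priori bound $\|\GRAD\ue_h\|_{\bL^2(\Omega)}\leq \tilde G$, followed by Corollary~\ref{cor:discrHeat} with $q=1/2$ to conclude $\Te_h\in\frakB_T^h$. The one place you diverge is the caveat about possible non-uniqueness of the discrete velocity: the paper notes that the small-data condition encoded in the factor $\nu_-/C_P$ of $\tilde S$ makes the discrete solution of \eqref{eq:defofDiscrFrakF1} unique (citing \cite[Theorem IV.3.1]{MR548867}), so no selection is needed --- and single-valuedness is in fact what the subsequent Brouwer fixed-point argument requires.
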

\begin{proof}
The proof essentially repeats that of Proposition~\ref{prop:Fwd}. For this reason, we only sketch it. Let $(\theta_h,\bu_h) \in \frakB^h = \frakB_T^h \times \frakB_\bu^h$. Since we have skew symmetrized the convective term, we know that problem \eqref{eq:defofDiscrFrakF1} always has a solution which satisfies
\[
  \| \GRAD \ue_h \|_{\bL^2(\Omega)} \leq \frac1{\nu_-} \| \theta_h \bg \|_{\bH^{-1}(\Omega)} \leq \frac{C_{e,1}g}{\nu_-} \| \GRAD \theta_h \|_{\bL^2(\varpi,\Omega)} \leq \tilde G,
\]
where we used \eqref{eq:boundedness_1} and the fact that $\theta_h \in \frakB_T^h$. In addition, we observe that the conditions on the data for this solution to be unique are met \cite[Theorem IV.3.1]{MR548867}. This, now unique, $\ue_h \in \bX_h$ can be used as datum in \eqref{eq:defofDiscrFrakF2}. Corollary~\ref{cor:discrHeat}, with $q=1/2$, then implies that this problem has a unique solution, which satisfies
\[
  \| \GRAD \Te_h \|_{\bL^2(\varpi)} \leq \frac4\gamma \| \calH \|_{H^{-1}(\varpi)} \leq \tilde S,
\]
where we used the assumption on $\calH$. Thus, as we intended to show, $\frakF_h$ is well--defined on $\frakB^h$ and $\frakF_h(\frakB^h) \subset \frakB^h$.
\end{proof}

We conclude by showing existence of solutions, via a fixed point argument. Since we are in finite dimensions this is much easier now.

\begin{theorem}[existence]
\label{thm:ExistenceDiscr}
Assume that $\Lambda(\kappa) \geq \Lambda_1$, where $\Lambda_1$ is defined in Proposition~\ref{prop:discrMeyers}. If the heat source $\calH \in H^{-1}(\varpi,\Omega)$ satisfies the estimate
\[
  \| \calH \|_{H^{-1}(\varpi,\Omega)} \leq \frac{\gamma\tilde S}4,
\]
then, for every $h>0$, there is a triple $(\ue_h,\pe_h,\Te_h) \in \bX_h \times M_h \times W_h$ that solves \eqref{eq:Busih}. Moreover, we have that $\ue_h \in \frakB^h_\bu$ and $\Te_h \in \frakB_T^h$.
\end{theorem}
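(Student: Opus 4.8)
The plan is to obtain existence of a discrete solution by applying a finite--dimensional fixed point theorem to the map $\frakF_h$ over the closed, bounded, convex, nonempty set $\frakB^h = \frakB_T^h \times \frakB_\bu^h$. Proposition~\ref{prop:frakFhmakessense} already guarantees, under the stated hypotheses on $\Lambda(\kappa)$ and $\calH$, that $\frakF_h$ is well--defined and that $\frakF_h(\frakB^h)\subset \frakB^h$; thus what remains is to verify that $\frakF_h$ is continuous. Once continuity is established, Brouwer's fixed point theorem (or its standard corollary for continuous self--maps of compact convex subsets of a finite--dimensional space, see \cite[Corollary 9.7]{MR787404} or the version cited as \cite[Theorem 8.8]{MR787404} in the continuous case) yields a fixed point $(\Te_h,\ue_h) = \frakF_h(\Te_h,\ue_h)\in\frakB^h$. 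Unwinding the definition of $\frakF_h$ in \eqref{eq:defofDiscrFrakF1}--\eqref{eq:defofDiscrFrakF2}, this fixed point, together with the associated pressure $\pe_h\in M_h$, is precisely a triple solving \eqref{eq:Busih}; and by construction $\ue_h\in\frakB_\bu^h$, $\Te_h\in\frakB_T^h$, which gives the asserted bounds.

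First I would record that, since $W_h$, $\bX_h$, and $M_h$ are finite dimensional, all norms on them are equivalent, and weak and strong convergence of sequences coincide; in particular Lemma~\ref{lem:frakFwcont} already furnishes the needed continuity almost for free, but I would give a short self--contained argument. Take $(\theta_h^{(n)},\bu_h^{(n)})\to(\theta_h,\bu_h)$ in $W_h\times\bX_h$. The Stokes--type step \eqref{eq:defofDiscrFrakF1} has a unique solution $(\ue_h^{(n)},\pe_h^{(n)})$ because the data satisfy the smallness condition \cite[Theorem IV.3.1]{MR548867}; the uniform bound $\|\GRAD\ue_h^{(n)}\|_{\bL^2(\Omega)}\le\tilde G$ together with the inf--sup condition \eqref{eq:LBB} gives a uniform bound on $\pe_h^{(n)}$, so a subsequence converges, and passing to the limit in \eqref{eq:defofDiscrFrakF1} (the nonlinear and $\theta_h$--dependent terms converge because $\nu$ is continuous and everything lives in finite dimensions) identifies the limit as the unique solution for $(\theta_h,\bu_h)$; hence the whole sequence converges, $\ue_h^{(n)}\to\ue_h$ and $\pe_h^{(n)}\to\pe_h$. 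Then \eqref{eq:defofDiscrFrakF2} determines $\Te_h^{(n)}$ uniquely by Corollary~\ref{cor:discrHeat} (with $q=1/2$, using $\ue_h^{(n)}\in\frakB_\bu^h$ and $\Lambda(\kappa)\ge\Lambda_1$); the uniform bound on $\Te_h^{(n)}$ lets us extract a convergent subsequence, and passing to the limit — the coefficient $\kappa(\theta_h^{(n)})\to\kappa(\theta_h)$ pointwise and boundedly, and $\Te_h^{(n)}\ue_h^{(n)}\to\Te_h\ue_h$ in the relevant topology since convergence is strong in the finite--dimensional space — shows the limit solves \eqref{eq:defofDiscrFrakF2} for $(\theta_h,\bu_h)$; by uniqueness the full sequence converges. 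Therefore $\frakF_h$ is continuous on $\frakB^h$.

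I do not anticipate a genuine obstacle here: the heavy lifting — the weighted inf--sup stability, the well--posedness of the discrete heat equation with convection, and the invariance $\frakF_h(\frakB^h)\subset\frakB^h$ — has already been done in Proposition~\ref{prop:discrMeyers}, Corollary~\ref{cor:discrHeat}, and Proposition~\ref{prop:frakFhmakessense}. The only point requiring a little care is the continuity of the Stokes step with respect to the coefficient $\nu(\theta_h)$ and the data; this is where one leans on uniqueness under small data to upgrade "convergence of a subsequence" to "convergence of the whole sequence," avoiding any circularity. Everything else is a routine compactness argument in a finite--dimensional space, and the conclusion $\ue_h\in\frakB_\bu^h$, $\Te_h\in\frakB_T^h$ is immediate from $\frakF_h$ mapping $\frakB^h$ into itself.
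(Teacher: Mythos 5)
Your proposal is correct and follows essentially the same route as the paper: Brouwer's fixed point theorem on the closed convex set $\frakB^h$, with invariance supplied by Proposition~\ref{prop:frakFhmakessense} and continuity of $\frakF_h$ obtained by rerunning the argument of Lemma~\ref{lem:frakFwcont} while exploiting finite dimensionality to upgrade weak to strong convergence. The extra detail you give on the Stokes step (uniform bounds, subsequence extraction, and uniqueness under small data to recover convergence of the whole sequence) is exactly what the paper's ``repeat \emph{verbatim}'' instruction implicitly requires.
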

\begin{proof}
Since we are now in finite dimensions, we will apply Brouwer's fixed point theorem \cite[Theorem 3.2]{MR787404}. For that, we only need to verify the continuity of $\frakF_h$. This is achieved by repeating \emph{verbatim} the proof of Lemma~\ref{lem:frakFwcont} and using that we are in finite dimensions to pass from weak to strong convergence.
\end{proof}

\subsection{Convergence}
\label{sub:convergence}

The results of the previous section show that, provided the heat source $\calH$ and the oscillation of $\kappa$ are not too large,
%the oscillation of $\kappa$ is not too large, and the heat source $\calH$ is not too large, 
then for every $h>0$ problem \eqref{eq:Busih} has a solution, and that this family of solutions remains bounded uniformly in $h>0$. We can then pass to a (not relabeled) weakly convergent subsequence $(\Te_h,\ue_h) \rightharpoonup (\Te,\ue)$. We will show here that this limit must be a solution to \eqref{eq:BousiStrong}, in the sense of Definition~\ref{def:solution}.

We begin with some notation. We define
\begin{align*} 
  \hat \frakB_\bu  &= \left\{ \bu \in \bH^1_0(\Omega) : \| \GRAD \bu \|_{\bL^2(\Omega)} \leq \hat G \right\},
  &\hat\frakB_\bu^h &= \hat\frakB_\bu \cap \bX_h,
  &\hat G &= \min\left\{ G, \tilde G \right\},
  \\
  \hat\frakB_T &= \left\{ \theta \in H^1_0(\varpi,\Omega): \| \GRAD \theta \|_{\bL^2(\varpi,\Omega)} \leq \hat S \right\}, 
  &\hat\frakB_T^h &= \hat\frakB_T \cap W_h, &  \hat S &= \min \left\{ S, \tilde S \right\},
\end{align*}
$\hat \frakB =  \hat\frakB_T \times \hat \frakB_\bu$, and $H = \min\left\{ \frac{\gamma\tilde S}4, \frac{2S}{C_\kappa} \right\}$.

\begin{theorem}[convergence]
\label{thm:bigConv}
Assume that $\Lambda(\kappa) \geq \Lambda_1$, where $\Lambda_1$ is defined in Proposition~\ref{prop:discrMeyers}. If the heat source $\calH \in H^{-1}(\varpi,\Omega)$ satisfies
\[
  \| \calH \|_{H^{-1}(\varpi)} \leq H,
\]
then the family $\{ (\Te_h,\ue_h) \}_{h>0} \subset \frakB_T^h \times \frakB_\bu^h$ of solutions to \eqref{eq:Busih} converges weakly (up to subsequences) to an element of $\frakB_T \times \frakB_\bu$. Moreover, this limit is a solution to \eqref{eq:BousiStrong} in the sense of Definition~\ref{def:solution}.
\end{theorem}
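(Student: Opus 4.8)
The plan is to argue by compactness: extract weak limits from the uniformly bounded family of discrete solutions produced by Theorem~\ref{thm:ExistenceDiscr}, and then pass to the limit in \eqref{eq:Busih}, essentially transcribing to the discrete setting the arguments behind Lemma~\ref{lem:frakFwcont} and Theorem~\ref{thm:existenceFull}. First, since $H\le\gamma\tilde S/4$, Theorem~\ref{thm:ExistenceDiscr} applies and yields, for each $h>0$, a discrete solution with $\ue_h\in\frakB_\bu^h$ and $\Te_h\in\frakB_T^h$; as the radii $\tilde G$ and $\tilde S$ do not depend on $h$, the families $\{\ue_h\}$ and $\{\Te_h\}$ are bounded in $\bH^1_0(\Omega)$ and $H^1_0(\varpi,\Omega)$. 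I would then test the first equation of \eqref{eq:Busih} with $\bv_h$ and use these bounds---together with the skew--symmetric bound $|\int_\Omega((\ue_h\cdot\GRAD)\ue_h+\tfrac12(\DIV\ue_h)\ue_h)\cdot\bv_h\diff x|\lesssim\|\GRAD\ue_h\|_{\bL^2(\Omega)}^2\|\GRAD\bv_h\|_{\bL^2(\Omega)}$ and \eqref{eq:boundedness_1}---to see that $\bv_h\mapsto\int_\Omega\pe_h\DIV\bv_h\diff x$ has operator norm bounded uniformly in $h$; the discrete inf--sup condition \eqref{eq:LBB} then gives a uniform bound for $\pe_h$ in $L^2_0(\Omega)$. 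Passing to a (not relabeled) subsequence, $\ue_h\rightharpoonup\ue$ in $\bH^1_0(\Omega)$, $\pe_h\rightharpoonup\pe$ in $L^2_0(\Omega)$, $\Te_h\rightharpoonup\Te$ in $H^1_0(\varpi,\Omega)$, and weak lower semicontinuity of the norms places $(\Te,\ue)$ in $\frakB_T\times\frakB_\bu$.

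Next I would collect the strong convergences, exactly as in the proofs of Lemma~\ref{lem:frakFwcont} and Theorem~\ref{thm:existenceFull}: compactness of the embedding $\bH^1_0(\Omega)\hookrightarrow\bL^m(\Omega)$ for $m<2d$ (in particular $m=4$) gives $\ue_h\to\ue$ in $\bL^m(\Omega)$; Proposition~\ref{prop:WeightedEmb} gives $\Te_h\to\Te$ in $L^{2d/(d-1)}(\varpi,\Omega)$; and the reverse H\"older inequality gives $\Te_h\to\Te$ in $L^{1+\epsilon}(\Omega)$, hence, along a further subsequence, a.e.\ in $\Omega$. By continuity and boundedness of $\nu$ and $\kappa$ and dominated convergence, $\nu(\Te_h)\to\nu(\Te)$ and $\kappa(\Te_h)\to\kappa(\Te)$ in $L^p(\Omega)$ for every $p<\infty$, and they stay bounded in $L^\infty(\Omega)$.

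To pass to the limit I would fix $(\bv,q,r)\in\bH^1_0(\Omega)\times L^2_0(\Omega)\times H^1_0(\varpi^{-1},\Omega)$, pick (by density) $\bv_h\in\bX_h\to\bv$ and $q_h\in M_h\to q$, and take $r_h=\pi_W r\in W_h$, which converges to $r$ in $H^1_0(\varpi^{-1},\Omega)$ by \eqref{eq:piWstableandApprox}, and then handle \eqref{eq:Busih} term by term. Writing $\nu(\Te_h)\GRAD\bv_h-\nu(\Te)\GRAD\bv=\nu(\Te_h)(\GRAD\bv_h-\GRAD\bv)+(\nu(\Te_h)-\nu(\Te))\GRAD\bv$ and invoking the previous paragraph shows $\nu(\Te_h)\GRAD\bv_h\to\nu(\Te)\GRAD\bv$ in $\bL^2(\Omega)$, so the viscous term converges against $\GRAD\ue_h\rightharpoonup\GRAD\ue$; the heat diffusion term is the same, now pairing $\GRAD\Te_h\rightharpoonup\GRAD\Te$ (weakly in $\bL^2(\varpi,\Omega)$) against $\kappa(\Te_h)\GRAD r_h\to\kappa(\Te)\GRAD r$ (strongly in $\bL^2(\varpi^{-1},\Omega)$), the pairing $(f,g)\mapsto\int_\Omega fg\diff x$ being bounded on $\bL^2(\varpi,\Omega)\times\bL^2(\varpi^{-1},\Omega)$. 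The divergence equation passes to the limit at once and, $q$ being arbitrary, forces $\DIV\ue=0$. For the convective term I would use the identity $\int_\Omega\big((\bw\cdot\GRAD)\bu\cdot\bv+\tfrac12(\DIV\bw)\bu\cdot\bv\big)\diff x=\tfrac12\int_\Omega\big((\bw\cdot\GRAD)\bu\cdot\bv-(\bw\cdot\GRAD)\bv\cdot\bu\big)\diff x$ (valid for $\bw\in\bH^1_0(\Omega)$) with $\bw=\bu=\ue_h$: since $\ue_h\otimes\ue_h\to\ue\otimes\ue$ in $\bL^2(\Omega)$ and $\GRAD\bv_h\to\GRAD\bv$ in $\bL^2(\Omega)$, the left side converges to the same expression at $(\ue,\ue,\bv)$, which by $\DIV\ue=0$ equals $\int_\Omega(\ue\cdot\GRAD)\ue\cdot\bv\diff x$---so, in particular, the skew--symmetrization term $\tfrac12(\DIV\ue_h)\ue_h\cdot\bv_h$ simply drops out in the limit. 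The buoyancy term converges by the strong convergence of $\Te_h$ and \eqref{eq:boundedness_1}, the pressure term by weak--strong pairing, and $\langle\calH,r_h\rangle\to\langle\calH,r\rangle$. For the heat convective term I would split
\[
\int_\Omega\Te_h\ue_h\cdot\GRAD r_h\diff x-\int_\Omega\Te\ue\cdot\GRAD r\diff x=\int_\Omega\Te_h\ue_h\cdot\GRAD(r_h-r)\diff x+\int_\Omega\Te_h(\ue_h-\ue)\cdot\GRAD r\diff x+\int_\Omega(\Te_h-\Te)\ue\cdot\GRAD r\diff x
\]
and bound the three terms with \eqref{eq:boundedness_2}--\eqref{eq:estimateBu}: the first by $C_{e,2}\|\GRAD\ue_h\|_{\bL^2(\Omega)}\|\GRAD\Te_h\|_{\bL^2(\varpi,\Omega)}\|\GRAD(r_h-r)\|_{\bL^2(\varpi^{-1},\Omega)}$, the second by $C\|\GRAD\Te_h\|_{\bL^2(\varpi,\Omega)}\|\ue_h-\ue\|_{\bL^m(\Omega)}\|\GRAD r\|_{\bL^2(\varpi^{-1},\Omega)}$ with $m<2d$, and the third by $\|\varpi\|_{L^\infty(\Omega)}^{1/2-1/\ell}\|\ue\|_{\bL^{2d}(\Omega)}\|\Te_h-\Te\|_{L^\ell(\varpi,\Omega)}\|\GRAD r\|_{\bL^2(\varpi^{-1},\Omega)}$ with $\ell=2d/(d-1)$; each tends to $0$ by the first two paragraphs ($\varpi\in L^\infty(\Omega)$ since $\varpi^{-1}\in A_1$). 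Collecting the limits, $(\ue,\pe,\Te)$ satisfies \eqref{eq:BusiWeak} for all $(\bv,q,r)$, i.e.\ it is a weak solution in the sense of Definition~\ref{def:solution}.

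The hard part will be the heat convective term: the exponent $\ell=2d/(d-1)$ sits exactly at the endpoint of the compact embedding in Proposition~\ref{prop:WeightedEmb}, so one must route the temperature test function through the interpolant $\pi_W$ and use the boundedness estimate \eqref{eq:boundedness_2}---available precisely because $\varpi^{-1}\in A_1$---in order to absorb the gradient error $\GRAD(r_h-r)$. The only feature genuinely absent from the continuous existence proof is checking that the skew--symmetrization term does not pollute the limit, which the trilinear identity and $\DIV\ue=0$ settle at once; everything else is a routine adaptation of Lemma~\ref{lem:frakFwcont} and Theorem~\ref{thm:existenceFull}.
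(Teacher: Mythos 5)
Your argument is correct in all essentials and follows the paper's own route: uniform bounds from Theorem~\ref{thm:ExistenceDiscr}, extraction of weak limits, a.e.\ convergence of $\nu(\Te_h)$ and $\kappa(\Te_h)$ via the compact embedding into $L^{1+\epsilon}(\Omega)$, the interpolant $\pi_W$ for the temperature test functions, and the weighted H\"older/embedding machinery of Proposition~\ref{prop:bounded} for the convective heat term. Your three--way splitting of $\int_\Omega \Te_h\ue_h\cdot\GRAD r_h - \int_\Omega\Te\ue\cdot\GRAD r$ is a slight variant of the paper's direct product--convergence argument (which takes $\Te_h\to\Te$ in $L^k(\varpi,\Omega)$ for some $k>2d/(d-1)$, obtainable by interpolating the compact endpoint embedding against the continuous one with the $\delta$--improvement), and you are more explicit than the paper about the pressure bound and the momentum nonlinearity, which the paper delegates to references. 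The one point you gloss over is the final membership claim: weak lower semicontinuity only yields $\|\GRAD\ue\|_{\bL^2(\Omega)}\leq\tilde G$ and $\|\GRAD\Te\|_{\bL^2(\varpi,\Omega)}\leq\tilde S$, whereas the theorem asserts the limit lies in $\frakB_\bu\times\frakB_T$, whose radii $G$ and $S$ are given by different formulas and need not dominate $\tilde G$ and $\tilde S$. This is exactly why the paper introduces $\hat G=\min\{G,\tilde G\}$, $\hat S=\min\{S,\tilde S\}$ and the stronger smallness condition $\|\calH\|_{H^{-1}(\varpi,\Omega)}\leq H$ (whose second component you never use) so that the discrete family already sits in the smaller balls $\hat\frakB^h$; you should add this bookkeeping step, though it does not affect the convergence argument itself.
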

\begin{proof}
The assumptions guarantee that we can invoke Theorem~\ref{thm:ExistenceDiscr} to ascertain the existence of $(\Te_h,\ue_h) \in \hat \frakB_T^h \times \hat \frakB_\bu^h$ that solve \eqref{eq:Busih}. Moreover, since $\hat \frakB_T^h \times \hat \frakB_\bu^h \subset \hat \frakB$, this family of solutions remains in a bounded set, and we can extract weakly convergent subsequences which for simplicity of notation we do not relabel. Let us denote this limit by $(\Te,\ue) \in \hat \frakB$, and show that it is a solution to \eqref{eq:BusiWeak}.

Observe now that:
\begin{enumerate}[$\bullet$]
  \item The compact embedding $H^1_0(\varpi,\Omega) \hookrightarrow \hookrightarrow L^{1+\epsilon}(\Omega)$ implies, by continuity of $\nu$ and $\kappa$, that we have $\nu(\Te_h) \to \nu(\Te)$ and $\kappa(\Te_h) \to \kappa(\Te)$ almost everywhere in $\Omega$.
  
  \item Let $q \in C_0^\infty(\Omega)$ with zero average, and $q_h \in M_h$ its $L^2$--projection onto $M_h$. Then we have that
  \[
    \left| \int_\Omega q \DIV \ue \diff x \right| \leq \left| \int_\Omega (q-q_h) \DIV \ue_h \diff x \right| + \left| \int_\Omega q \DIV(\ue - \ue_h) \diff x \right| \to 0,
  \]
  where we used that $\ue_h$ is discretely solenoidal, the strong convergence $q_h \to q$ in $L^2_0(\Omega)$ and the weak convergence $\DIV \ue_h \rightharpoonup \DIV \ue$. In conclusion $\ue$ is solenoidal.
  
  \item We now show that the pair $(\Te,\ue)$ satisfies the heat equation with convection. Let now $r \in H^1_0(\varpi^{-1},\Omega)$ be arbitrary and introduce $r_h = \pi_W r \in W_h$, where the operator $\pi_W$ is the one that satisfies \eqref{eq:piWstableandApprox}. The almost everywhere convergence of $\kappa(\Te_h)$ then implies
  \begin{align*}
    \int_\Omega \kappa(\Te_h) \GRAD \Te_h \cdot \GRAD r_h \diff x &= \int_\Omega \kappa(\Te_h) \GRAD \Te_h \cdot \GRAD r \diff x + \int_\Omega \kappa(\Te_h) \GRAD \Te_h \cdot \GRAD(r_h- r) \diff x \\
      &\to \int_\Omega \kappa(\Te) \GRAD \Te \cdot \GRAD r \diff x.
  \end{align*}
Finally, let $k > 2d/(d-1)$ such that we have that $\Te_h \to \Te$ strongly in $L^k(\varpi, \Omega)$. Since we also have the compact embedding $H^1(\Omega) \hookrightarrow \hookrightarrow L^m(\Omega)$, with $k^{-1} + m^{-1} = 1/2$, \ie $m < 2d$, we know that $\ue_h \rightarrow \ue$ in $\bL^m(\Omega)$. This is sufficient to assert that
  \[
    \int_\Omega \Te_h \ue_h \cdot \GRAD r \diff x \to \int_\Omega \Te \ue \cdot \GRAD r \diff x.
  \]
  
  \item It remains to deal with the momentum equation, but most of the terms are standard here and have been treated in several other works, see for instance \cite{MR3543019,MR3579306,MR3125906,MR2970745,MR3127973}. The only somewhat nonstandard term is
  \[
    \int_\Omega \Te_h \bg \cdot \bv_h \diff x,
  \]
  but the estimates of Proposition~\ref{prop:bounded} can be used to assert convergence of this term as well.
\end{enumerate}
In conclusion, the limit is a solution and the theorem is proved.
\end{proof}

\section{A posteriori error estimates}
\label{sec:aposteriori}

In this section, we design and analyze an a posterior error estimator for the finite dimensional approximation \eqref{eq:Busih} of problem \eqref{eq:BusiWeak}. To be able to do so, in addition to the assumptions stated in section~\ref{sub:assumptions}, we shall require that:
\begin{enumerate}[$\bullet$]
  \item The viscosity $\nu$ and the thermal diffusivity $\kappa$ are independent of the temperature, \ie they are \emph{positive constants}.
\end{enumerate}

In addition, to be able to develop an explicit a posteriori error estimator and show its reliability and efficiency, we must be more specific in the structure of the discrete spaces we are dealing with. For this reason, in addition to the assumptions of section~\ref{sec:Discrete}, we assume that, for $h>0$, the spaces $W_h$, $\bX_h$, and $M_h$ are constructed using finite elements over a conforming and shape regular mesh $\T_h = \{K\}$ of $\bar \Omega$. In this setting, however, the parameter $h$ does not bear  the meaning of a mesh size. Rather, it can be thought of as $h=1/k$, where $k \in \mathbb{N}_0$ is the index set in a sequence of refinements of an original partition $\T_0$. For definiteness, we select the pair of discrete velocity/pressure spaces $(\bX_h,M_h)$ from the following (popular) options:
\begin{enumerate}[1.]
\item
The lowest order Taylor Hood element \cite{hood1974navier}, \cite{MR993474}, \cite[Section 4.2.5]{MR2050138}, which is defined by
\begin{align}
\label{eq:th_V}
\mathbf{X}_h & = \left\{  \bv_h \in \mathbf{C}(\bar \Omega): \ \forall K \in \T_h, \bv_h|_{K} \in \mathbb{P}_2(K)^d \right\} \cap \bH_0^1(\Omega),
\\
\label{eq:th_P}
M_h & = \left\{ q_h \in L^2(\Omega)/\R \cap C(\bar \Omega):\  \forall K \in \T_h, q_h|_{K} \in \mathbb{P}_1(K) \right\}.
\end{align}

\item The mini element, which is considered in \cite{MR799997}, \cite[Section 4.2.4]{MR2050138} and is defined by
\begin{align}
\label{eq:mini_V}
\mathbf{X}_h & = \left\{  \bv_h \in \mathbf{C}(\bar \Omega):\ \forall K \in \T_h, \bv_h|_{K} \in [\mathbb{P}_1(K) \oplus \mathbb{B}(K)]^d \right\} \cap \bH_0^1(\Omega),
\\
\label{eq:mini_P}
M_h & = \left\{  q_h \in  L^2(\Omega)/\R \cap C(\bar \Omega):\ \forall K \in \T_h, q_h|_{K} \in \mathbb{P}_1(K) \right\},
\end{align}
where $\mathbb{B}(K)$ denotes the space spanned by local bubble functions.
\end{enumerate}
Both pairs satisfy the compatibility condition \eqref{eq:LBB} and are such that $\mathbf{X}_h \subset \mathbf{H}_0^1(\Omega)$ and $M_h \subset L_0^2(\Omega)$. We will set the finite element space $W_h$ as
\begin{equation*}
W_{h} := \left\{ w_h \in C(\bar \Omega): \ \forall K \in \T_h, w_h|_{K} \in \mathbb{P}_k(K) \right\} \cap H_0^1(\Omega),
\end{equation*}
where $k=2$ when the Taylor--Hood element \eqref{eq:th_V}--\eqref{eq:th_P} is used to approximate the velocity and pressure variables and $k=1$ for when the mini element \eqref{eq:mini_V}--\eqref{eq:mini_P} is considered. Notice that, for any $\varpi \in A_2$, $W_h \subset W^{1,\infty}_0(\Omega) \subset H^1_0(\varpi^{\pm1},\Omega)$.

We begin our analysis by introducing some preliminary notions. We define the velocity error $\ee_\ue$, the pressure error $\ee_{\pe}$, and the temperature error $\ee_{\Te}$ as follows:
\begin{equation*}
\label{eq:errors_definition}
\ee_\ue:= \ue - \ue_h \in  \bH^1_0(\Omega), 
\quad
\ee_{\pe}:= \pe - \pe_h \in L^2_0(\Omega),
\quad
\ee_{\Te} = \Te - \Te_h \in H^1_0(\varpi,\Omega).
\end{equation*}
We also define, for an open set $D \subset \Omega$, the following norms on the space $\bH^1_0(D) \times L^2(D) \times H^1_0(\varpi,D)$:
\begin{equation*}
\begin{aligned}
\label{eq:energy_norm}
\interleave  (\bw, s, w ) \interleave_{D}^2 & := \nu \| \nabla \bw \|_{\bL^2(D)}^2 + \| \DIV \bw \|_{L^2(D)}^2 + \| s \|_{L^2(D)}^2  + \| \nabla w \|_{\bL^2(\varpi,D)}^2,
\\
 \| (\bw, s, w ) \|_{D}^2 & := \nu \| \nabla \bw \|_{\bL^2(D)}^2 + \| s \|_{L^2(D)}^2  + \| \nabla w \|_{\bL^2(\varpi,D)}^2.
\end{aligned}
\end{equation*}

\subsection{Ritz projection}
To perform a reliability analysis for the devised a posteriori error estimator we shall introduce a so--called Ritz projection $(\boldsymbol{\Phi}, \psi, \varphi)$ of the residuals. This projection is defined as the solution to the following problem: Find $(\boldsymbol{\Phi}, \psi, \varphi) \in \bH^1_0(\Omega) \times L^2_0(\Omega) \times H^1_0(\varpi,\Omega)$ such that
\begin{align}
\label{eq:RP_1}
 \nu \int_{\Omega} \nabla \boldsymbol{\Phi} : \nabla \bv \diff x & = \Xi(\bv), \qquad \forall \bv \in \bH^1_0(\Omega),
\\
\label{eq:RP_2}
\int_{\Omega} \psi q \diff x & =  \Sigma(q), \qquad \forall q \in L_0^2(\Omega),
\\
\label{eq:RP_3}
\int_{\Omega} \nabla \varphi \cdot \nabla r \diff x & =  \Upsilon(r), \qquad \forall r \in  H^1_0(\varpi^{-1},\Omega),
\end{align}
where the functionals $\Xi \in \bH^{-1}(\Omega)$, $\Sigma\in L_0^2(\Omega)$, and $\Upsilon \in H^{-1}(\varpi,\Omega)$ are defined, respectively, by
\begin{align}
\Xi( \bv) & := \int_{\Omega} \left( \nu \nabla \ee_\ue: \nabla \bv -  \ee_{\pe} \DIV \bv  \right. \nonumber \\&\left. +(\ue\cdot\GRAD)\ee_\ue\cdot \bv + (\ee_\ue \cdot \GRAD)\ue_h \cdot \bv + \frac12 \DIV \ee_\ue \ue_h \cdot \bv
% -  \ue \otimes \ee_\ue : \nabla \bv - \ee_\ue \otimes \ue_h : \nabla \bv  
- \ee_{\Te}  \mathbf{g} \cdot \bv \right) \diff x,
\label{eq:Xi}
\\
\nonumber
\Sigma(q) &:= - \int_{\Omega} q \DIV \ee_{\ue} \diff x,
\\
\nonumber
\Upsilon(r) & := \int_{\Omega} \left( \kappa \nabla \ee_{\Te} \cdot \nabla r -  \Te \ee_{\ue} \cdot \nabla r -  \ee_{\Te} \ue_h \cdot \nabla r \right)\diff x.
\end{align}

The following result yields the well--posedness of problem \eqref{eq:RP_1}--\eqref{eq:RP_3}. Here, and in what follows, by $C_{4\to2}$ we denote the best constant in the Sobolev embedding $\bH^1_0(\Omega) \hookrightarrow \bL^4(\Omega)$. To shorten notation we define
\begin{equation*}
\mathfrak{B}(\ue,\ue_h):= \frac{C_{4\to2}^2}\nu\left( \| \nabla \ue \|_{\bL^2(\Omega)} + \frac32 \| \nabla \ue_h \|_{\bL^2(\Omega)}\right),
\quad
\mathfrak{A}(\ue,\ue_h):= 1 + \mathfrak{B}(\ue,\ue_h).
\end{equation*}

\begin{proposition}[Ritz projection]
Problem \eqref{eq:RP_1}--\eqref{eq:RP_3} admits a unique solution $( \boldsymbol{\Phi}, \psi,  \varphi) \in \bH^1_0(\Omega) \times L^2_0(\Omega) \times H^1_0(\varpi,\Omega)$. In addition, we have the estimate
\begin{multline}
\label{eq:estimate_Ritz}
\| ( \boldsymbol{\Phi}, \psi,  \varphi) \|_{\Omega}^2 \leq 
 \left[ 3 \mathfrak{A}(\ue,\ue_h)^2
 + 2\frac{C_{1}^2C_{e,2}^2}\nu \|\GRAD \Te \|_{\bL^2(\varpi,\Omega)}^2 \right] \nu \| \GRAD \ee_\ue \|_{\bL^2(\Omega)}^2 
 \\ + \| \DIV \ee_\ue \|_{L^2(\Omega)}^2 + \frac3\nu \| \ee_\pe \|_{L^2(\Omega)}^2 \\
 + \left[\frac{3g^2C_{e,1}^2}{\nu} + 2 C_1^2\left( \kappa + C_{e,2}\| \GRAD \ue_h \|_{\bL^2(\Omega)} \right)^2 \right] \| \GRAD \ee_{\Te} \|_{\bL^2(\varpi,\Omega)}^2,
\end{multline}
where $C_1$ denotes the constant in the inf-sup estimate \eqref{eq:inf-sup-for-heat} with $\varkappa = 1$.
\end{proposition}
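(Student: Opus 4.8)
The system \eqref{eq:RP_1}--\eqref{eq:RP_3} is block triangular: \eqref{eq:RP_2} determines $\psi$ directly, and \eqref{eq:RP_1} and \eqref{eq:RP_3} are each a scalar elliptic problem with a source that is a bounded functional. So the plan is first to establish existence and uniqueness by treating the three equations in turn, and then to extract the quantitative bound \eqref{eq:estimate_Ritz} by testing each equation with its own solution and carefully estimating the right-hand sides. For \eqref{eq:RP_2}, $\psi$ is the $L^2_0$-Riesz representative of $\Sigma$, so $\|\psi\|_{L^2(\Omega)} = \|\Sigma\|_{(L^2_0(\Omega))'} \le \|\DIV\ee_\ue\|_{L^2(\Omega)}$; this is immediate. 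For \eqref{eq:RP_1}, I must verify $\Xi \in \bH^{-1}(\Omega)$: the terms $\nu\nabla\ee_\ue:\nabla\bv$ and $\ee_\pe\DIV\bv$ are classical; the three trilinear terms are controlled via the embedding $\bH^1_0(\Omega)\hookrightarrow\bL^4(\Omega)$ with constant $C_{4\to2}$, using that $\ue,\ue_h\in\bH^1_0(\Omega)$; and the buoyancy term $\ee_\Te\,\mathbf g\cdot\bv$ is handled by \eqref{eq:boundedness_1} of Proposition~\ref{prop:bounded}. Since $\Xi\in\bH^{-1}(\Omega)$, Lax--Milgram (coercivity constant $\nu$) gives a unique $\boldsymbol\Phi$. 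For \eqref{eq:RP_3}, I must check $\Upsilon\in H^{-1}(\varpi,\Omega)$: the first term is the unweighted bilinear form $\int\nabla\ee_\Te\cdot\nabla r$, bounded by $C_1$-type duality (here $\varkappa=1$), while $\Te\,\ee_\ue\cdot\nabla r$ and $\ee_\Te\,\ue_h\cdot\nabla r$ are bounded by \eqref{eq:boundedness_2} of Proposition~\ref{prop:bounded}; then \eqref{eq:inf-sup-for-heat} with $\varkappa=1$ (inf-sup constant $C_1$) yields a unique $\varphi$.

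For the quantitative estimate, I would test \eqref{eq:RP_1} with $\bv=\boldsymbol\Phi$ and \eqref{eq:RP_3} with $r=\varphi$. From the first, $\nu\|\nabla\boldsymbol\Phi\|_{\bL^2(\Omega)}^2 = \Xi(\boldsymbol\Phi)$; bounding each of the six contributions to $\Xi(\boldsymbol\Phi)$ by (a constant)$\times(\text{error norm})\times\|\nabla\boldsymbol\Phi\|_{\bL^2(\Omega)}$ and dividing by $\|\nabla\boldsymbol\Phi\|_{\bL^2(\Omega)}$ gives
\[
  \sqrt{\nu}\,\|\nabla\boldsymbol\Phi\|_{\bL^2(\Omega)}
  \le \mathfrak{A}(\ue,\ue_h)\sqrt{\nu}\,\|\nabla\ee_\ue\|_{\bL^2(\Omega)}
    + \frac{1}{\sqrt\nu}\|\ee_\pe\|_{L^2(\Omega)}
    + \frac{gC_{e,1}}{\sqrt\nu}\|\nabla\ee_\Te\|_{\bL^2(\varpi,\Omega)},
\]
where the three trilinear terms collapse into the factor $\mathfrak{B}(\ue,\ue_h)$ inside $\mathfrak{A}$ (using that $\frac12\DIV\ee_\ue\,\ue_h$ contributes $\tfrac32\|\nabla\ue_h\|$ after combining with $(\ee_\ue\cdot\GRAD)\ue_h$). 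Analogously, from \eqref{eq:RP_3} and the inf-sup bound \eqref{eq:inf-sup-for-heat} with $\varkappa=1$,
\[
  \|\nabla\varphi\|_{\bL^2(\varpi,\Omega)}
  \le C_1\Big[\kappa\|\nabla\ee_\Te\|_{\bL^2(\varpi,\Omega)}
    + C_{e,2}\|\nabla\Te\|_{\bL^2(\varpi,\Omega)}\|\nabla\ee_\ue\|_{\bL^2(\Omega)}
    + C_{e,2}\|\nabla\ue_h\|_{\bL^2(\Omega)}\|\nabla\ee_\Te\|_{\bL^2(\varpi,\Omega)}\Big],
\]
again via \eqref{eq:boundedness_2}. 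Squaring each of these two inequalities, using $(a+b+c)^2\le 3(a^2+b^2+c^2)$ on the $\boldsymbol\Phi$-bound and $(a+b)^2\le 2(a^2+b^2)$ appropriately on the $\varphi$-bound, adding $\|\psi\|_{L^2(\Omega)}^2\le\|\DIV\ee_\ue\|_{L^2(\Omega)}^2$, and regrouping coefficients, reproduces exactly \eqref{eq:estimate_Ritz}.

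The only genuinely delicate point is the bookkeeping: matching the constants so that the trilinear terms combine into $\mathfrak{A}(\ue,\ue_h)^2$ and the coupling terms land with the stated coefficients $2C_1^2C_{e,2}^2\|\nabla\Te\|^2/\nu$ and $2C_1^2(\kappa+C_{e,2}\|\nabla\ue_h\|)^2$. In particular one must be careful that $\|\nabla\ue\|_{\bL^2(\Omega)}$ and $\|\nabla\Te\|_{\bL^2(\varpi,\Omega)}$ appearing in $\mathfrak{B}$ and in the coupling coefficient are the \emph{exact} solution norms (finite by Theorem~\ref{thm:existenceFull}), not error norms, and that the skew-symmetrized convective term is split as $(\ue\cdot\GRAD)\ee_\ue\cdot\boldsymbol\Phi$ (estimated using $\|\nabla\ue\|$) plus $[(\ee_\ue\cdot\GRAD)\ue_h + \tfrac12\DIV\ee_\ue\,\ue_h]\cdot\boldsymbol\Phi$ (both estimated using $\|\nabla\ue_h\|$, contributing the $\tfrac32$). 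Everything else is a routine application of Cauchy--Schwarz, Hölder, the $\bL^4$-embedding, and Proposition~\ref{prop:bounded}.
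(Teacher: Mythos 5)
Your proposal is correct and follows essentially the same route as the paper: treat the three equations separately (Lax--Milgram for \eqref{eq:RP_1}, the Riesz/$L^2_0$ identification for \eqref{eq:RP_2}, and the inf--sup estimate \eqref{eq:inf-sup-for-heat} with $\varkappa=1$ for \eqref{eq:RP_3}), bound each right-hand side via the $\bL^4$-embedding and Proposition~\ref{prop:bounded}, and then square and sum with the constants $3$ and $2$ exactly as you describe. Your accounting of the $\tfrac32\|\nabla\ue_h\|$ factor from the skew-symmetrized convection and of which norms are exact-solution norms matches the paper's derivation of \eqref{eq:bound_Phi}--\eqref{eq:bound_varphi}.
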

\begin{proof}
Since $\Xi \in \bH^{-1}(\Omega)$, the Lax--Milgram Lemma immediately yields the existence of a unique $\boldsymbol{\Phi} \in \bH^1_0(\Omega)$ that solves problem \eqref{eq:RP_1}.  In addition, estimate \eqref{eq:boundedness_1} and the standard Sobolev embedding $\bH^1_0(\Omega) \hookrightarrow \bL^4(\Omega)$ yield
\begin{multline}
\label{eq:bound_Phi}
  \sqrt{\nu} \| \nabla \boldsymbol{\Phi} \|_{\bL^2(\Omega)} \leq
  \mathfrak{A}(\ue,\ue_h)
  \sqrt{\nu} \| \GRAD \ee_\ue \|_{\bL^2(\Omega)} \\ + \frac1{\sqrt{\nu}} \| \ee_\pe\|_{L^2(\Omega)} + \frac{gC_{e,1}}{\sqrt{\nu}} \| \GRAD \ee_{\Te}\|_{\bL^2(\varpi,\Omega)}.
\end{multline}
On the other hand, since $\ee_{\ue} \in \bH^1_0(\Omega)$, similar arguments reveal the existence and uniqueness of $\psi \in L^{2}_0(\Omega)$ that solves problem \eqref{eq:RP_2} together with the bound
\begin{equation} 
\label{eq:bound_psi}
\| \psi \|_{L^2(\Omega)} \leq \| \DIV \ee_{\ue} \|_{L^2(\Omega)}.
\end{equation}
Next, we invoke the inf--sup condition \eqref{eq:inf-sup-for-heat} for the variational form of the Dirichlet Laplace operator on weighted spaces to conclude that there exists a unique $\varphi \in H^1_0(\varpi,\Omega)$ that solves \eqref{eq:RP_3}. In addition, we have that $\varphi$ satisfies the estimate
\begin{multline}
\label{eq:bound_varphi}
\| \nabla \varphi \|_{\bL^2(\varpi,\Omega)} \leq C_1 \left[ \kappa \| \nabla \ee_{\Te} \|_{\bL^2(\varpi,\Omega)} +  C_{e,2} \| \nabla \Te \|_{\bL^2(\varpi,\Omega)}  \| \nabla \ee_{\ue} \|_{\bL^2(\Omega)}  \right.
\\
\left. +  C_{e,2} \| \nabla \ee_{\Te} \|_{\bL^2(\varpi,\Omega)}  \| \nabla \ue_h \|_{\bL^2(\Omega)} \right],
\end{multline}
where $C_1$ denotes the constant in the inf-sup estimate \eqref{eq:inf-sup-for-heat} with $\varkappa = 1$.

The desired estimate \eqref{eq:estimate_Ritz} thus follows from collecting estimates \eqref{eq:bound_Phi}, \eqref{eq:bound_psi}, and \eqref{eq:bound_varphi}. This concludes the proof.
\end{proof} 

\subsection{An upper bound for the error}
We now prove that the energy norm of the error can be bounded in terms of the energy norm of the Ritz projection, which in turn will allow us to provide a computable upper bound for the error.

\begin{theorem}[upper bound for the error]
\label{thm:upper_bound_for_error}
Assume that the solutions to \eqref{eq:BusiWeak} and \eqref{eq:Busih} are such that the following inequalities hold:
\begin{equation}
\begin{aligned}
  \max\left\{ C_\kappa C_{e,2}, \frac{3C_{4\to2}^2}{2\nu} \right\} \| \GRAD \ue_h \|_{\bL^2(\Omega)} &\leq \frac1{16} \\
  \frac{C_{4\to2}^2}{\nu} \| \GRAD \ue \|_{\bL^2(\Omega)} &\leq \frac1{16} \\
  \frac{g C_\kappa C_{e,1}C_{e,2}}\nu  \| \GRAD \Te \|_{\bL^2(\varpi,\Omega)} &\leq \frac{15}{16^2}.
\end{aligned}
\label{eq:assumptions_for_Ritz}
\end{equation}
Then, we have that
\begin{equation*}
\interleave (\ee_{\ue}, \ee_{\pe}, \ee_{\Te} ) \interleave_{\Omega} \lesssim \interleave  (\boldsymbol{\Phi},\psi , \varphi ) \interleave_{\Omega},
\end{equation*}
where the hidden constant is independent of $(\ee_{\ue}, \ee_{\pe}, \ee_{\Te} )$ and $ (\boldsymbol{\Phi},\psi , \varphi )$ but depends on $\nu$ and the constant involved in the inf--sup condition \eqref{eq:inf_sup}.
\end{theorem}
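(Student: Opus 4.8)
The plan is to read off, from the definitions \eqref{eq:RP_1}--\eqref{eq:RP_3}, \eqref{eq:Xi}, that the error triple $(\ee_\ue,\ee_\pe,\ee_\Te)$ solves the \emph{linearized} Boussinesq system whose data are carried by the Ritz projection $(\boldsymbol\Phi,\psi,\varphi)$, and then to establish the stability of that linearized system under \eqref{eq:assumptions_for_Ritz}. Concretely, subtracting \eqref{eq:Busih} from \eqref{eq:BusiWeak} and unfolding the residuals $\Xi,\Sigma,\Upsilon$, one records the error equations: for all $(\bv,q,r)\in\bH^1_0(\Omega)\times L^2_0(\Omega)\times H^1_0(\varpi^{-1},\Omega)$,
\begin{align*}
  \nu\int_\Omega\GRAD\ee_\ue:\GRAD\bv\diff x - \int_\Omega\ee_\pe\DIV\bv\diff x + \int_\Omega\Big((\ue\cdot\GRAD)\ee_\ue+(\ee_\ue\cdot\GRAD)\ue_h+\tfrac12\DIV\ee_\ue\,\ue_h-\ee_\Te\bg\Big)\cdot\bv\diff x & = \nu\int_\Omega\GRAD\boldsymbol\Phi:\GRAD\bv\diff x, \\
  \int_\Omega q\DIV\ee_\ue\diff x & = -\int_\Omega\psi q\diff x, \\
  \kappa\int_\Omega\GRAD\ee_\Te\cdot\GRAD r\diff x - \int_\Omega\big(\Te\ee_\ue+\ee_\Te\ue_h\big)\cdot\GRAD r\diff x & = \int_\Omega\GRAD\varphi\cdot\GRAD r\diff x.
\end{align*}
Choosing $q=\DIV\ee_\ue+\psi\in L^2_0(\Omega)$ (note $\int_\Omega\DIV\ee_\ue\diff x=0$ since $\ue,\ue_h\in\bH^1_0(\Omega)$) gives $\DIV\ee_\ue=-\psi$, hence $\|\DIV\ee_\ue\|_{L^2(\Omega)}=\|\psi\|_{L^2(\Omega)}$; so it suffices to control $\nu\|\GRAD\ee_\ue\|_{\bL^2(\Omega)}^2+\|\ee_\pe\|_{L^2(\Omega)}^2+\|\GRAD\ee_\Te\|_{\bL^2(\varpi,\Omega)}^2$ by $\interleave(\boldsymbol\Phi,\psi,\varphi)\interleave_\Omega^2$.

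I would then bound the three pieces, each time using one of the smallness conditions of \eqref{eq:assumptions_for_Ritz} to absorb a perturbation. \emph{Temperature:} apply the weighted inf--sup \eqref{eq:inf-sup-for-heat} (with $\varkappa=\kappa$) to $\ee_\Te$, substitute the third error equation, estimate $\int_\Omega\Te\ee_\ue\cdot\GRAD r$ and $\int_\Omega\ee_\Te\ue_h\cdot\GRAD r$ by \eqref{eq:boundedness_2}, and absorb the $\ue_h$--term using $C_\kappa C_{e,2}\|\GRAD\ue_h\|_{\bL^2(\Omega)}\le\tfrac1{16}$; this yields $\|\GRAD\ee_\Te\|_{\bL^2(\varpi,\Omega)}\lesssim\|\GRAD\varphi\|_{\bL^2(\varpi,\Omega)}+C_{e,2}\|\GRAD\Te\|_{\bL^2(\varpi,\Omega)}\|\GRAD\ee_\ue\|_{\bL^2(\Omega)}$. \emph{Pressure:} use the continuous inf--sup condition \eqref{eq:inf_sup} for $\DIV\colon\bH^1_0(\Omega)\to L^2_0(\Omega)$ together with the first error equation to rewrite $\int_\Omega\ee_\pe\DIV\bv\diff x$, control the convective terms via the Sobolev embedding $\bH^1_0(\Omega)\hookrightarrow\bL^4(\Omega)$ (constant $C_{4\to2}$) and the buoyancy term via \eqref{eq:boundedness_1}; the smallness of $\|\GRAD\ue\|,\|\GRAD\ue_h\|$ keeps the convective part proportional to $\nu\|\GRAD\ee_\ue\|$, giving $\|\ee_\pe\|_{L^2(\Omega)}\lesssim\nu\|\GRAD\ee_\ue\|_{\bL^2(\Omega)}+gC_{e,1}\|\GRAD\ee_\Te\|_{\bL^2(\varpi,\Omega)}+\nu\|\GRAD\boldsymbol\Phi\|_{\bL^2(\Omega)}$. \emph{Velocity:} test the first error equation with $\bv=\ee_\ue$; the term $\int_\Omega(\ue\cdot\GRAD)\ee_\ue\cdot\ee_\ue$ vanishes because $\DIV\ue=0$ and $\ue\in\bH^1_0(\Omega)$, the remaining convective terms are $O(C_{4\to2}^2\|\GRAD\ue_h\|\,\|\GRAD\ee_\ue\|^2)$ and are absorbed via $\tfrac{3C_{4\to2}^2}{2\nu}\|\GRAD\ue_h\|\le\tfrac1{16}$, the pressure term equals $\int_\Omega\ee_\pe\psi\diff x$ and is handled by Young's inequality using $\|\psi\|=\|\DIV\ee_\ue\|$, and the buoyancy term is bounded by \eqref{eq:boundedness_1}; inserting the two previous bounds and using the third smallness condition (the $gC_\kappa C_{e,1}C_{e,2}/\nu$ one) to absorb the compound buoyancy/temperature contribution yields $\nu\|\GRAD\ee_\ue\|_{\bL^2(\Omega)}^2\lesssim\nu\|\GRAD\boldsymbol\Phi\|_{\bL^2(\Omega)}^2+\|\psi\|_{L^2(\Omega)}^2+\|\GRAD\varphi\|_{\bL^2(\varpi,\Omega)}^2$.

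Finally I would feed the velocity estimate back into the temperature and pressure estimates, add the three inequalities together with $\|\DIV\ee_\ue\|=\|\psi\|$, and recognize the right-hand side as a multiple of $\nu\|\GRAD\boldsymbol\Phi\|_{\bL^2(\Omega)}^2+\|\psi\|_{L^2(\Omega)}^2+\|\GRAD\varphi\|_{\bL^2(\varpi,\Omega)}^2\le\interleave(\boldsymbol\Phi,\psi,\varphi)\interleave_\Omega^2$, which is the claim. Equivalently, the whole computation can be packaged as: hypotheses \eqref{eq:assumptions_for_Ritz} ensure the linearized Boussinesq operator in the error equations satisfies the inf--sup condition \eqref{eq:inf_sup}, while the data $\bv\mapsto\nu\int_\Omega\GRAD\boldsymbol\Phi:\GRAD\bv$, $q\mapsto\int_\Omega\psi q$, $r\mapsto\int_\Omega\GRAD\varphi\cdot\GRAD r$ have norm $\lesssim\interleave(\boldsymbol\Phi,\psi,\varphi)\interleave_\Omega$ in the relevant dual space, so the bound follows from \eqref{eq:inf_sup}.

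The main obstacle is that the three error quantities are genuinely coupled — pressure needs velocity, velocity needs both pressure and temperature, temperature needs velocity — so the three steps cannot be carried out in isolation but must be combined so that every cross term is absorbable; this is exactly what forces the precise thresholds $1/16$ and $15/16^2$ in \eqref{eq:assumptions_for_Ritz}, which must simultaneously dominate $C_{4\to2}$, the embedding constants $C_{e,1},C_{e,2}$, and the inf--sup constants $C_\kappa$ and the continuous LBB constant. Put differently, the real content is establishing \eqref{eq:inf_sup} under \eqref{eq:assumptions_for_Ritz}, i.e.\ that the convection/buoyancy coupling is a small enough perturbation of the (stable) decoupled Stokes $\oplus$ weighted-Laplace system. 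A secondary technical point is the weight mismatch between the temperature trial space $H^1_0(\varpi,\Omega)$ and its test space $H^1_0(\varpi^{-1},\Omega)$, which rules out a naive energy test on the heat equation and is why the weighted inf--sup \eqref{eq:inf-sup-for-heat}, rather than coercivity, has to be used there.
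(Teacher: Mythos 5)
Your proposal is correct and follows essentially the same route as the paper's proof: the same three coupled bounds (temperature via the weighted inf--sup \eqref{eq:inf-sup-for-heat}, pressure via the LBB condition for the divergence, velocity by testing with $\ee_\ue$ and exploiting skew--symmetry), the same absorption of each perturbation by the corresponding smallness condition in \eqref{eq:assumptions_for_Ritz}, and the same final recombination. The only (harmless) refinement is that you derive the identity $\DIV\ee_\ue=-\psi$ where the paper contents itself with $\|\DIV\ee_\ue\|_{L^2(\Omega)}\leq\|\psi\|_{L^2(\Omega)}$.
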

\begin{proof}
We divide the proof in six steps.

\emph{Step 1.} We first bound $ \| \nabla \ee_{\Te}\|_{\bL^2(\varpi,\Omega)}$. Owing to Proposition \ref{prop:weightedMeyers} we have that there is a positive constant $C_{\kappa}$ such that the following inf--sup condition holds:
\begin{equation}
\label{eq:inf_sup_e_T}
 \| \nabla \ee_{\Te}\|_{\bL^2(\varpi,\Omega)} \leq C_{\kappa} \sup_{r \in H_0^1(\varpi^{-1},\Omega)} \frac{\int_{\Omega} \kappa \nabla \ee_{\Te} \cdot \nabla r \diff x}{\| \nabla r\|_{\bL^2(\varpi^{-1},\Omega)}}.
\end{equation}
To estimate the right hand side of \eqref{eq:inf_sup_e_T} we rewrite equation \eqref{eq:RP_3} as 
\[
\int_{\Omega} \kappa \nabla \ee_{\Te} \cdot \nabla r \diff x =  \int_{\Omega} \left( \Te \ee_{\ue} \cdot \nabla r  + \ee_{\Te} \ue_h \cdot \nabla r  +  \nabla \varphi \cdot \nabla r \right) \diff x \quad \forall r \in H_0^1(\varpi^{-1},\Omega).
\]
Utilize the inf--sup condition \eqref{eq:inf_sup_e_T} and estimate \eqref{eq:boundedness_2}, twice, to obtain
\begin{multline*}
\| \nabla \ee_{\Te}\|_{\bL^2(\varpi,\Omega)} \leq C_{\kappa} \left( C_{e,2}\| \nabla \Te \|_{\bL^2(\varpi,\Omega)}  \| \nabla \ee_{\ue} \|_{\bL^2(\Omega)} \right.
\\
\left. + C_{e,2} \| \nabla \ee_{\Te} \|_{\bL^2(\varpi,\Omega)}  \| \nabla \ue_h \|_{\bL^2(\Omega)} + \| \nabla \varphi \|_{\bL^2(\varpi,\Omega)} \right),
\end{multline*}
which, upon utilizing the first estimate in \eqref{eq:assumptions_for_Ritz}, yields the bound
\begin{equation}
\label{eq:estimate_for_ue_T}
\frac{15}{16}
\| \nabla \ee_{\Te}\|_{\bL^2(\varpi,\Omega)} \leq C_{\kappa} \left( C_{e,2}  \| \nabla \Te \|_{\bL^2(\varpi,\Omega)}  \| \nabla \ee_{\ue} \|_{\bL^2(\Omega)} +  \| \nabla \varphi \|_{\bL^2(\varpi,\Omega)} \right).
\end{equation}

\emph{Step 2.} We now control  $\| \ee_{\pe} \|_{L^2(\Omega)}$. To do this, we rewrite equation \eqref{eq:RP_1} as follows:
\begin{multline*}
\int_{\Omega} \ee_{\pe} \DIV \bv \diff x = \int_{\Omega} \left( \nu \nabla \ee_\ue: \nabla \bv   +  (\ue \cdot \nabla) \ee_\ue \cdot  \bv + (\ee_\ue \cdot \nabla) \ue_h \cdot  \bv \right.
\\
\left. + \frac12 \DIV \ee_\ue \ue_h \cdot \bv - \ee_{\Te}  \mathbf{g} \cdot \bv - \nu\nabla \boldsymbol{\Phi} : \nabla \bv\right) \diff x =: \mathfrak{X}(\bv) \quad \forall \bv \in \bH_0^1(\Omega).
\end{multline*}
A standard inf--sup condition for the divergence thus yields the estimate
\begin{equation}
\beta \| \ee_{\pe} \|_{L^2(\Omega)} \leq  \| \mathfrak{X} \|_{\bH^{-1}(\Omega)}.
\label{eq:inf_sup}
\end{equation}
Moreover, Sobolev embeddings and estimate \eqref{eq:boundedness_1} imply that
\begin{multline*}
\|\mathfrak{X}\|_{\bH^{-1}(\Omega)} \leq \nu \left( \| \nabla \ee_\ue \|_{\bL^2(\Omega)} + \| \nabla \boldsymbol{\Phi} \|_{\bL^2(\Omega)} \right) \\
  + \mathfrak{B}(\ue,\ue_h)\nu \| \nabla \ee_\ue \|_{\bL^2(\Omega)}+  C_{e,1}g \| \nabla \ee_{\Te} \|_{\bL^2(\varpi,\Omega)} .
\end{multline*}
To conclude this step, we invoke \eqref{eq:estimate_for_ue_T} to arrive at
\begin{multline}
\label{eq:estimate_for_ee_pe}
  \beta \| \ee_{\pe} \|_{L^2(\Omega)} \leq   \nu ( \| \nabla \boldsymbol{\Phi} \|_{\bL^2(\Omega)} + \| \nabla \ee_\ue \|_{\bL^2(\Omega)} ) + \frac{16C_\kappa C_{e,1}g}{15} \| \nabla \varphi \|_{\bL^2(\varpi,\Omega)} \\
+
\left[ 
   \mathfrak{B}(\ue,\ue_h)
 + \frac{16 C_\kappa C_{e,1} C_{e,2}g}{15\nu} \| \nabla \Te \|_{\bL^2(\varpi,\Omega)}
\right]  \nu\| \nabla \ee_{\ue} \|_{\bL^2(\Omega)} .
\end{multline}

\emph{Step 3.} Set $\bv = \ee_{\ue}$ in \eqref{eq:RP_1} and $q = -\ee_{\pe}$ in \eqref{eq:RP_2}. Adding the obtained relations, and using the skew symmetry of convection when the first argument is solenoidal, we see that
\begin{multline*}
\nu \| \nabla \ee_{\ue} \|^2_{\bL^2(\Omega)} \leq \nu \| \nabla \boldsymbol{\Phi} \|_{\bL^2(\Omega)} \| \nabla \ee_{\ue} \|_{\bL^2(\Omega)} + \| \psi \|_{L^2(\Omega)}\| \ee_{\pe}  \|_{L^2(\Omega)} \\
  + \frac{3 C_{4\to2}^2}{2\nu} \| \GRAD \ue_h \|_{\bL^2(\Omega)}\nu \|\GRAD \ee_\ue \|_{\bL^2(\Omega)}^2 + C_{e,1}g \|\GRAD \ee_{\Te} \|_{\bL^2(\varpi,\Omega)} \| \nabla \ee_{\ue} \|_{\bL^2(\Omega)}.
\end{multline*}
Estimate \eqref{eq:estimate_for_ue_T} and Young's inequality imply that
\begin{multline*}
  \nu (1-\varepsilon_1) \| \nabla \ee_{\ue} \|^2_{\bL^2(\Omega)} \leq \nu \| \nabla \boldsymbol{\Phi} \|_{\bL^2(\Omega)} \| \nabla \ee_{\ue} \|_{\bL^2(\Omega)} + \| \ee_\pe \|_{L^2(\Omega)} \| \psi \|_{L^2(\Omega)} \\
    + \left[\frac{3 C_{4\to2}^2}{2\nu}\| \GRAD \ue_h \|_{\bL^2(\Omega)} + \frac{16 C_\kappa C_{e,1}C_{e,2}g}{15\nu}\|\GRAD \Te \|_{\bL^2(\varpi,\Omega)} \right]\nu \|\GRAD \ee_\ue \|_{\bL^2(\Omega)}^2 \\
    + C_{\varepsilon_1} \left(\frac{16 C_\kappa C_{e,1} C_{e,2} g}{15 {\sqrt{\nu}} } \right)^2 \| \GRAD \varphi \|_{\bL^2(\varpi,\Omega)}^2.
\end{multline*}
The pressure error estimate \eqref{eq:estimate_for_ee_pe}, and repeated applications of Young's inequality imply that, for some $\varepsilon \in (0,1)$
\begin{multline*}
  (1-\varepsilon)\nu \| \GRAD \ee_\ue \|_{\bL^2(\Omega)}^2 \leq C_{\varepsilon,\beta}\nu\| \GRAD \boldsymbol{\Phi} \|_{\bL^2(\Omega)}^2  + C_{\varepsilon,\beta,{\nu}} \| \psi \|_{L^2(\Omega)}^2 
 + C_{\varepsilon,\beta,{\nu}} \| \GRAD \varphi \|_{\bL^2(\varpi,\Omega)}^2  \\+
 \left[\frac{3 C_{4\to2}^2}{2\nu}\| \GRAD \ue_h \|_{\bL^2(\Omega)} + \frac{16 C_\kappa C_{e,1}C_{e,2}g}{15\nu}\|\GRAD \Te \|_{\bL^2(\varpi,\Omega)} \right]\nu \|\GRAD \ee_\ue \|_{\bL^2(\Omega)}^2 \\
 + \frac12
\left[ \mathfrak{B}(\ue,\ue_h) + \frac{16 C_\kappa C_{e,1} C_{e,2}g}{15\nu} \| \nabla \Te \|_{\bL^2(\varpi,\Omega)} \right]^2 \nu\| \nabla \ee_{\ue} \|_{\bL^2(\Omega)}^2 .
\end{multline*}
Now, estimates \eqref{eq:assumptions_for_Ritz} imply that we can choose $\varepsilon \in (0,1)$ so that
\[
  \nu \| \nabla \ee_{\ue} \|^2_{\bL^2(\Omega)}  \lesssim \nu \| \nabla \boldsymbol{\Phi} \|_{\bL^2(\Omega)}^2 +  \| \psi \|^2_{L^2(\Omega)} 
+ \| \nabla \varphi \|^2_{\bL^2(\varpi,\Omega)},
\]
where the hidden constant depends on $\epsilon$, $\beta$, and $\nu$.

\emph{Step 5}. The previous estimate combined with estimates \eqref{eq:estimate_for_ue_T} and \eqref{eq:estimate_for_ee_pe} yield
\[
  \nu \| \nabla \ee_{\ue} \|^2_{\bL^2(\Omega)} + \| \ee_\pe \|_{L^2(\Omega)}^2 + \| \GRAD \ee_{\Te} \|_{\bL^2(\varpi,\Omega)}^2 \lesssim \nu \| \nabla \boldsymbol{\Phi} \|_{\bL^2(\Omega)}^2 +  \| \psi \|^2_{L^2(\Omega)} 
+ \| \nabla \varphi \|^2_{\bL^2(\varpi,\Omega)}.
\]

\emph{Step 6}. Conclude with the obvious observation that, since $\DIV \ee_\ue \in L^2_0(\Omega)$, \eqref{eq:RP_2} implies
\[
  \| \DIV \ee_\ue \|_{L^2(\Omega)} \leq \| \psi \|_{L^2(\Omega)}.
\]
 
The claimed upper bound for the error has been obtained, and the theorem has been proved.
\end{proof}

\subsection{A residual--type error estimator}
In this section, we design an a posteriori error estimator for the finite dimensional approximation \eqref{eq:Busih} of problem \eqref{eq:BusiWeak}. To be to be able to do so, we will assume that the singular forcing $\mathcal{H}$ has a particular structure, that is:
\begin{enumerate}[$\bullet$]
  \item The singular forcing term $\calH$ has the form $\mathcal{H} = \mathfrak{h} \delta_{z}$, where $\delta_{z}$ corresponds to the Dirac delta supported at the interior point $z \in \Omega$ and $\mathfrak{h} \in \mathbb{R}$.
\end{enumerate}
To handle such a singular forcing term, we introduce the weight $\dist{z}^{\alpha}$, where $\dist{z}(x) := |x-z|$ and $\alpha \in (d-2,d)$. We must immediately notice the following two important properties: First, owing to Remark \ref{rem:weightdelta}, we have that the weight $\dist{z}^{\alpha}$ is such that $\dist{z}^{\alpha} \in A_2(\Omega)$ and $\dist{z}^{-\alpha} \in A_1$. Second, $\delta_{z} \in H^{-1}(\dist{z}^{\alpha},\Omega)$;  see \cite[Lemma 7.1.3]{KMR} and \cite[Remark 21.18]{MR2305115}. Simply put, all the assumptions we have made so far apply to this particular choice of $\calH$ and weight.

\subsubsection{Notation}

Before presenting and analyzing our a posteriori error estimator we first need to introduce and set some notation. We recall that $\T_h = \{K\}$ is a conforming and shape regular partition of $\bar\Omega$ into closed simplices $K$ with size $h_K = \diam(K)\approx |K|^{1/d}$. We denote by $\Sides$ the set of \emph{internal} $(d-1)$--dimensional interelement boundaries $S$ of $\T_h$. For $S \in \Sides$, we indicate by $h_S$ the diameter of $S$. If $K\in\T_h$, we define $\Sides_{K}$ as the subset of $\Sides$ that contains the sides of $K$. For $S \in \Sides$, we set $\Ne_S = \{ K^+, K^-\}$, where $K^+, K^- \in \T$ are such that $S=K^+ \cap K^-$. For $K \in \T_h$, we define the following \emph{stars} or \emph{patches} associated with the element $K$
\begin{equation}
\Ne_K := \left\{ K' \in \T_h : \Sides_K \cap \Sides_{K'} \neq \emptyset \right\},
\qquad
\mathcal{S} _K := \{ K' \in \T_h : K \cap K' \neq \emptyset \}.
\label{eq:NeTstar}
\end{equation}
In an abuse of notation, below we denote by $\Ne_K$ and $\mathcal{S}_K$ either the sets themselves, or the union of its elements.

\subsubsection{A posteriori error estimator}\label{sub_section_estimator}
We define an error estimator that can be decomposed as the sum of two contributions: a contribution related to the discretization of the stationary Navier--Stokes equations and another one associated to the discretization of the stationary heat equation with convection and singular forcing.

To present the contribution related to the stationary Navier--Stokes equations, we define, for an element $K \in \T_h$ and an internal side $S \in \Sides$, the \emph{element residual} $\Res_K$ and the \emph{interelement residual} $\! \Jump_S$ as
\begin{equation}\label{eq:residuals_navier}
\begin{aligned}
\Res_K & := \left(\nu \Delta \ue_h - ( \ue_h \cdot \nabla ) \ue_h - \tfrac{1}{2} \DIV \ue_h \ue_h -  \nabla \pe_h + \Te_h \bg\right)|_K,
\\
\Jump_S &:= \llbracket( \nu \nabla \ue_h - \pe_h \mathbf{I} ) \cdot \boldsymbol{n} \rrbracket,
\end{aligned}
\end{equation}
where $(\ue_{h},\pe_h,\Te_h)$ denotes a solution to the discrete problem \eqref{eq:Busih} and  $\mathbf{I} \in \R^{d \times d}$ denotes the identity matrix. For a discrete tensor valued function $\bv_h$, we denote by $\llbracket \bv_h \cdot \boldsymbol{n} \rrbracket$ the jump, which is defined, on the internal side $S \in \Sides$ shared by the distinct elements $K^+$, $K^{-} \in \mathcal{N}_S$, by
\begin{equation*}
\label{eq:jump}
 \llbracket \bv_h \cdot \boldsymbol{n} \rrbracket =  \bv_h|_{K^+}\cdot \boldsymbol{\nu}^+ +  \bv_h|_{K^-} \cdot \boldsymbol{n}^-.
\end{equation*}
Here $\boldsymbol{n}^+, \boldsymbol{n}^-$ are unit normals on $S$ pointing towards $K^+$, $K^{-}$, respectively. With $\Res_K$ and $\! \Jump_S$ at hand, we define, for $K \in \T_h$, the \emph{element indicator}
\begin{equation}
\label{eq:estimator_Navier}
\E_K^2:= h_K^2 \| \Res_K \|^2_{\bL^2(K)} 
+ \| \DIV  \ue_h  \|^2_{L^2(K)} + h_K \|  \Jump_S  \|^2_{\bL^2(\partial K \setminus \partial \Omega)}.
\end{equation}

We now introduce the contribution associated to the stationary heat equation with convection. To accomplish this task, we define, for $K \in \T_h$ and an internal side $S \in \Sides$, the \emph
{element residual} $\mathfrak{R}_K$ and the \emph{interelement residual} $\mathfrak{J}_S$ as
\begin{equation}
\begin{aligned}
\mathfrak{R}_K & := \left(\kappa \Delta \Te_h  - \ue_h \cdot \nabla \Te_h - \DIV \ue_h \Te_h \right)|_K,
\\
\mathfrak{J}_S &:= \llbracket (\kappa \nabla \Te_h - \Te_h \ue_h) \cdot \boldsymbol{n} \rrbracket.
\end{aligned}
\label{eq:residuals_heat}
\end{equation}
With $\mathfrak{R}_K$ and $\mathfrak{J}_S$ at hand, we define, for $K \in \T_h$ and $\alpha \in (d-2,d)$, %the \emph{element indicator}
\begin{equation}
\label{eq:estimator_heat}
\mathfrak{E}_K^2:= h_K^2 D_K^{\alpha} \| \mathfrak{R}_K  \|^2_{L^2(K)} 
+ h_K D_K^{\alpha} \| \mathfrak{J}_S \|^2_{L^2(\partial K \setminus \partial \Omega)} + |\mathfrak{h}|h_K^{\alpha+2-d} \# (\{z\} \cap K),
\end{equation}
where for a set $E$, by $\#(E)$ we mean its cardinality. Thus $\#(\{z\} \cap K)$ equals one if $z \in K$ and zero otherwise. Here we must recall that we consider our elements $K$ to be closed sets.

With all these ingredients at hand, we define the local error indicator $\mathcal{E}_K^2:= \E_K^2 + \mathfrak{E}_K^2$ and the a posteriori error estimators
\begin{equation}
\E_h := \left[ \sum_{K \in \T_h} \E_K^2\right]^{\frac{1}{2}},
\quad
\mathfrak{E}_h := \left[ \sum_{K \in \T_h} \mathfrak{E}_K^2\right]^{\frac{1}{2}},
\quad
\mathcal{E}_h^2:= \left[ \sum_{K \in \T_h} \mathcal{E}_K^2 \right]^{\frac{1}{2}}.
\end{equation}

\subsection{Reliability estimates}

We present the following global reliability estimate for the estimator $\mathcal{E}_h$.

\begin{theorem}[global reliability]
Let $(\ue,\pe, \Te) \in \bH_0^1(\Omega) \times L_0^2(\Omega) \times H_0^1(\dist{z}^{\alpha},\Omega)$ be a solution to \eqref{eq:BusiWeak} and $(\ue_{h},\pe_{h}, \Te_h) \in \bX_h \times M_h \times W_h$ be its finite element approximation obtained as solution to \eqref{eq:Busih}. Let $\alpha \in (d-2,d)$. In the framework of Theorem \ref{thm:upper_bound_for_error}, we have the following a posteriori error estimate:
\begin{equation}
\interleave(\ee_{\ue}, \ee_{\pe}, \ee_{\Te} ) \interleave_{\Omega} \lesssim \mathcal{E}_h,
\label{eq:reliability}
\end{equation}
where the hidden constant is independent of the continuous and discrete solutions, the size of the elements in the mesh $\T_h$, and $\#\T_h$.
\end{theorem}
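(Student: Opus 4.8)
The plan is to combine the abstract bound of Theorem~\ref{thm:upper_bound_for_error} with a residual analysis of the three \emph{decoupled} problems \eqref{eq:RP_1}--\eqref{eq:RP_3} that define the Ritz projection $(\boldsymbol{\Phi},\psi,\varphi)$. Since $|\DIV\boldsymbol{\Phi}|\le\sqrt{d}\,|\GRAD\boldsymbol{\Phi}|$ pointwise, Theorem~\ref{thm:upper_bound_for_error} reduces \eqref{eq:reliability} to establishing $\sqrt{\nu}\,\|\GRAD\boldsymbol{\Phi}\|_{\bL^2(\Omega)}+\|\psi\|_{L^2(\Omega)}+\|\GRAD\varphi\|_{\bL^2(\dist{z}^{\alpha},\Omega)}\lesssim\mathcal{E}_h$. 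The first move is to rewrite the functionals $\Xi$, $\Sigma$, $\Upsilon$: using the continuous weak formulation \eqref{eq:BusiWeak}, $\DIV\ue=0$, and the elementary identities $(\ue\cdot\GRAD)\ue=(\ue_h\cdot\GRAD)\ue_h+(\ee_{\ue}\cdot\GRAD)\ue_h+(\ue\cdot\GRAD)\ee_{\ue}$, $\DIV\ee_{\ue}=-\DIV\ue_h$ and $\Te\ue-\Te_h\ue_h=\Te\,\ee_{\ue}+\ee_{\Te}\,\ue_h$, one checks that $\Xi$, $\Sigma$, $\Upsilon$ are exactly the residuals of the discrete solution against the exact data; in particular $\Sigma(q)=\int_\Omega q\,\DIV\ue_h\diff x$ and $\Upsilon(r)=\mathfrak{h}\,r(z)-\int_\Omega(\kappa\GRAD\Te_h\cdot\GRAD r-\Te_h\ue_h\cdot\GRAD r)\diff x$. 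The discrete equations \eqref{eq:Busih} then furnish the Galerkin orthogonalities $\Xi(\bv_h)=0$ for all $\bv_h\in\bX_h$ and $\Upsilon(r_h)=0$ for all $r_h\in W_h$.

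For the pressure, testing \eqref{eq:RP_2} with $q=\psi$ gives $\|\psi\|_{L^2(\Omega)}\le\|\DIV\ue_h\|_{L^2(\Omega)}\le\E_h$, the last inequality by the very definition \eqref{eq:estimator_Navier} of $\E_K$. For the velocity I would test \eqref{eq:RP_1} with $\bv=\boldsymbol{\Phi}$, subtract a Cl\'ement--Scott--Zhang quasi-interpolant $\Pi_h\boldsymbol{\Phi}\in\bX_h$ (legitimate thanks to $\Xi(\bv_h)=0$), and integrate by parts element by element; the interior and interelement contributions produce precisely the residuals $\Res_K$ and $\Jump_S$ of \eqref{eq:residuals_navier}. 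The standard unweighted local interpolation estimates $\|\boldsymbol{\Phi}-\Pi_h\boldsymbol{\Phi}\|_{\bL^2(K)}\lesssim h_K\|\GRAD\boldsymbol{\Phi}\|_{\bL^2(\mathcal{S}_K)}$, $\|\boldsymbol{\Phi}-\Pi_h\boldsymbol{\Phi}\|_{\bL^2(S)}\lesssim h_S^{1/2}\|\GRAD\boldsymbol{\Phi}\|_{\bL^2(\mathcal{S}_K)}$, a Cauchy--Schwarz inequality, and the finite overlap of the patches yield $\nu\|\GRAD\boldsymbol{\Phi}\|_{\bL^2(\Omega)}^2\lesssim\E_h\,\|\GRAD\boldsymbol{\Phi}\|_{\bL^2(\Omega)}$, hence $\sqrt{\nu}\,\|\GRAD\boldsymbol{\Phi}\|_{\bL^2(\Omega)}\lesssim\E_h$. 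This is the classical residual analysis for the skew--symmetrized Navier--Stokes equations, and together with $|\DIV\boldsymbol{\Phi}|\le\sqrt{d}\,|\GRAD\boldsymbol{\Phi}|$ it controls the entire Navier--Stokes part of $\interleave(\ee_{\ue},\ee_{\pe},\ee_{\Te})\interleave_{\Omega}$ by $\E_h$.

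The temperature term is where the weight and the Dirac mass enter. I would apply the inf--sup estimate \eqref{eq:inf-sup-for-heat} with $\varkappa=1$ to $\varphi$ and use \eqref{eq:RP_3} to get $\|\GRAD\varphi\|_{\bL^2(\dist{z}^{\alpha},\Omega)}\le C_1\sup_r\Upsilon(r)/\|\GRAD r\|_{\bL^2(\dist{z}^{-\alpha},\Omega)}$, the supremum over $r\in H^1_0(\dist{z}^{-\alpha},\Omega)$. For a fixed such $r$, set $r_h=\pi_W r\in W_h$, with $\pi_W$ the weighted quasi-interpolant of \eqref{eq:piWstableandApprox} (whose local and pointwise companions, available as in \cite{NOS3}, are also used); then $\Upsilon(r)=\Upsilon(r-r_h)$ by Galerkin orthogonality. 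Element-wise integration by parts of the regular part of $\Upsilon$ produces the residuals $\mathfrak{R}_K$ and $\mathfrak{J}_S$ of \eqref{eq:residuals_heat}; on each $K$ a weighted Cauchy--Schwarz inequality (replacing $\dist{z}^{\alpha}$ over $K$ by the representative value $D_K^{\alpha}$, the comparison being licit by the doubling properties of $\dist{z}^{\alpha}\in A_2$, cf.\ \eqref{eq:strong_double_property}) together with the \emph{weighted} local interpolation estimates $\|r-r_h\|_{L^2(\dist{z}^{-\alpha},K)}\lesssim h_K\|\GRAD r\|_{\bL^2(\dist{z}^{-\alpha},\mathcal{S}_K)}$ and its trace analogue bounds those contributions by the first two terms of $\mathfrak{E}_K$ in \eqref{eq:estimator_heat} times $\|\GRAD r\|_{\bL^2(\dist{z}^{-\alpha},\mathcal{S}_K)}$; summing and using finite overlap yields control by $\mathfrak{E}_h\,\|\GRAD r\|_{\bL^2(\dist{z}^{-\alpha},\Omega)}$. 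The Dirac contribution $\mathfrak{h}\,(r-r_h)(z)$ would be localized to the element $K_z\ni z$ and estimated through a weighted pointwise bound $|(r-r_h)(z)|\lesssim h_{K_z}^{(\alpha+2-d)/2}\|\GRAD r\|_{\bL^2(\dist{z}^{-\alpha},\mathcal{S}_{K_z})}$, valid precisely because $\dist{z}^{-\alpha}\in A_1$ for $\alpha\in(d-2,d)$ (Remark~\ref{rem:weightdelta}); upon squaring this reproduces, up to a harmless power of $|\mathfrak{h}|$ that the hidden constant may absorb, the last term $|\mathfrak{h}|\,h_K^{\alpha+2-d}\#(\{z\}\cap K)$ of \eqref{eq:estimator_heat}. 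Hence $\|\GRAD\varphi\|_{\bL^2(\dist{z}^{\alpha},\Omega)}\lesssim\mathfrak{E}_h$, and collecting the three bounds with Theorem~\ref{thm:upper_bound_for_error} gives \eqref{eq:reliability}.

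I expect the main obstacle to be the temperature estimator rather than the Navier--Stokes one: the bound for $\varphi$ relies on a quasi-interpolation operator into $W_h$ that is simultaneously stable and locally approximating in the $\dist{z}^{-\alpha}$-weighted $H^1$-seminorm \emph{and} admits a pointwise error bound at $z$ with the sharp power of $h_{K_z}$. Proving these weighted local and pointwise interpolation estimates --- above all on the elements that abut the singular point $z$, where $\dist{z}^{\alpha}$ is far from constant so that the doubling properties of the weight must be used to pass between $\dist{z}^{\alpha}$ and $D_K^{\alpha}$ --- and verifying that the resulting scalings match \eqref{eq:estimator_heat} is where the real effort lies; by contrast, all the Navier--Stokes manipulations are routine.
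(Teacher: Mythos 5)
Your proposal is correct and follows essentially the same route as the paper: reduce via Theorem~\ref{thm:upper_bound_for_error} to bounding the Ritz projection, treat $\psi$ by testing with itself, treat $\boldsymbol{\Phi}$ by Galerkin orthogonality plus Cl\'ement-type interpolation and elementwise integration by parts, and treat $\varphi$ via the weighted inf--sup condition \eqref{eq:inf-sup-for-heat}, the weighted quasi-interpolant $\pi_W$, and a weighted pointwise/duality bound for the Dirac term (the paper's appeal to \cite[Theorem 4.7]{AGM} and \cite[Propositions 4--5]{MR3892359}), including the same observation that the resulting $|\mathfrak{h}|^2 h_K^{\alpha+2-d}$ is absorbed into the estimator's $|\mathfrak{h}|h_K^{\alpha+2-d}$ term by a data-dependent constant. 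No substantive differences from the paper's argument.
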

\begin{proof}
We proceed in several steps.

\emph{Step 1.} Invoke Theorem \ref{thm:upper_bound_for_error}, and the obvious bound $\| \DIV \boldsymbol{\Phi} \|_{L^2(\Omega)} \leq \| \nabla \boldsymbol{\Phi} \|_{\bL^2(\Omega)}$ to arrive at the estimate
\[
\interleave(\ee_{\ue}, \ee_{\pe}, \ee_{\Te} ) \interleave_{\Omega}^2 \lesssim \| \nabla \boldsymbol{\Phi} \|_{\bL^2(\Omega)}^2 + \| \psi \|_{L^2(\Omega)}^2  + \| \nabla \varphi \|_{\bL^2(\dist{z}^{\alpha},\Omega)}^2.
\]
It thus suffices to bound $\| \nabla \boldsymbol{\Phi} \|_{\bL^2(\Omega)}$, $\| \psi \|_{L^2(\Omega)}$, and $\| \nabla \varphi \|_{\bL^2(\dist{z}^{\alpha},\Omega)}$. 

\emph{Step 2.} We control $\| \nabla \boldsymbol{\Phi} \|_{\bL^2(\Omega)}$. To accomplish this task, we invoke equation \eqref{eq:RP_1}, the fact that $(\ue,\pe,\Te)$ solves problem \eqref{eq:BusiWeak}, and an integration by parts formula to conclude that, for every $\bv \in \bH_0^1(\Omega)$, we have
\begin{equation}
\mathrm{I}:= \int_{\Omega} \nu \nabla \boldsymbol{\Phi} : \nabla \bv \diff x  = \sum_{K \in \T_h} \int_{K} \Res_{K} \cdot \bv \diff x + \sum_{S \in \Sides} \int_S \Jump_S \cdot \bv \diff s.
\label{eq:term_I}
\end{equation}
Denote by $I_h$ the Cl\'ement interpolation operator. We utilize the first equation of problem \eqref{eq:Busih} with $\bv_h = I_h \bv$ and an integration by parts formula, again, to arrive at 
\begin{equation*}
%\int_{\Omega} \nabla \boldsymbol{\Phi} : \nabla \bv \diff x  
\mathrm{I} = \sum_{K \in \T_h} \int_{K} \Res_{K} \cdot (\bv - I_h \bv)  \diff x + \sum_{S \in \Sides} \int_S \Jump_S \cdot (\bv - I_h \bv)  \diff s.
\end{equation*}
We now invoke standard approximation properties for the interpolation operator $I_h$ and a scaled trace inequality to conclude that
\begin{equation*}
%\left | \int_{\Omega} \nabla \boldsymbol{\Phi} : \nabla \bv \diff x \right| 
|\mathrm{I}|
\lesssim \sum_{K \in \T_h} \left(h_K \|  \Res_{K}\|_{\bL^2(K)} \| \nabla \bv \|_{\bL^2(\mathcal{S}_K)} 
+
\sum_{S \in \Sides_K} h_K^{\frac{1}{2}} \| \Jump_S \|_{\bL^2(S)}  \| \nabla \bv \|_{\bL^2(\mathcal{S}_K)}\right).
\end{equation*}
Set $\bv = \boldsymbol{\Phi}$, use the Cauchy--Schwarz inequality in $\mathbb{R}^{\# \T_h}$ and the finite overlapping property of stars to obtain
\begin{equation}
\| \nabla \boldsymbol{\Phi} \|_{\bL^2(\Omega)} \lesssim \left(\sum_{K \in \T_h}  h_K^2\|  \Res_{K}\|^2_{\bL^2(K)}   + h_K  \| \Jump_S \|^2_{\bL^2(\partial K \setminus \partial \Omega)} \right)^{\frac{1}{2}} .
 \label{eq:est_Phi}
\end{equation}

\emph{Step 3.} In this step we bound $ \|\psi \|_{L^2(\Omega)}$. Set $q = \psi$ in \eqref{eq:RP_2}. This yields
\begin{equation}
  \begin{aligned}
    \|\psi \|_{L^2(\Omega)}^2 &=  \int_{\Omega} \psi \DIV \ue_h \diff x \leq \sum_{K \in \T_h} \|\psi \|_{L^2(K)} \| \DIV \ue_h \|_{L^2(K)} \\
    &\leq \left( \sum_{K \in \T_h} \| \DIV \ue_h \|_{L^2(K)} \right)^{\frac12} \| \psi \|_{L^2(\Omega)}.
  \end{aligned}
\label{eq:est_psi}
\end{equation}
Notice that we have used that $\int_{\Omega} q \DIV \ue \diff x = 0$ for every $q \in L_0^2(\Omega)$.

\emph{Step 4.} Estimates \eqref{eq:est_Phi} and \eqref{eq:est_psi} immediately yield that
\begin{equation}
\label{eq:est_Phi_and_psi}
  \left( \| \nabla \boldsymbol{\Phi} \|_{\bL^2(\Omega)}^2 + \|\psi \|_{L^2(\Omega)}^2 \right)^{\frac12} \lesssim 
  \left(\sum_{K \in \T_h} \E_K^2 \right)^{\frac{1}{2}} = \E_h.
\end{equation}

\emph{Step 5.} Our goal now is to bound $\| \nabla \varphi \|_{\bL^2(\dist{z}^{\alpha},\Omega)}$. To accomplish this task, we invoke the problem that $\varphi$ solves, \ie problem \eqref{eq:RP_3}, and the fact that $(\ue,\pe,\Te)$ solves \eqref{eq:BusiWeak}. These arguments, combined with an integration by parts formula, yield
\begin{equation*}
\mathrm{II}:= \int_{\Omega} \nabla \varphi \cdot \nabla r \diff x  = \langle \mathfrak{h} \delta_z , r \rangle +
 \sum_{K \in \T_h} \int_{K} \mathfrak{R}_{K} r \diff x + \sum_{S \in \Sides} \int_S \mathfrak{J}_S r \diff s,
\end{equation*}
for every $r \in H_0^1(\dist{z}^{-\alpha},\Omega)$. We recall that $\mathfrak{R}_{K}$ and $\mathfrak{J}_S$ are defined in \eqref{eq:residuals_heat}. Invoke the discrete problem \eqref{eq:Busih} and an integration by parts formula, again, to arrive at
\begin{equation*}
\mathrm{II} = \langle \mathfrak{h} \delta_z , r - \pi_W r \rangle +
 \sum_{K \in \T_h} \int_{K} \mathfrak{R}_{K} (r-\pi_W r) \diff x + \sum_{S \in \Sides} \int_S \mathfrak{J}_S (r-\pi_W r) \diff s,
\end{equation*}
where we recall that $\pi_W$ denotes the quasi--interpolation operator onto $W_h$, constructed in \cite{NOS3}, that satisfies \eqref{eq:piWstableandApprox}. We control $\langle \mathfrak{h} \delta_z , r - \pi_W r \rangle$ on the basis of \cite[Theorem 4.7]{AGM} and stability and interpolation estimates for $\pi_W$ derived in \cite{NOS3}. In fact, let $K$ be such that $z \in K$, then
\begin{equation*}
\begin{aligned}
|\langle \mathfrak{h} \delta_z , r - \pi_W r \rangle| & \lesssim |\mathfrak{h}|  h_K^{\frac{\alpha}{2}-\frac{d}{2}} \| r - \pi_W r \|_{L^2(\dist{z}^{-\alpha},K)} + |\mathfrak{h}| h_K^{\frac{\alpha}{2}+ 1-\frac{d}{2}} \| \nabla( r - \pi_W r) \|_{\bL^2(\dist{z}^{-\alpha},K)}
\\
& \lesssim 
|\mathfrak{h}| h_K^{\frac{\alpha}{2}+ 1-\frac{d}{2}} \| \nabla r \|_{\bL^2(\dist{z}^{-\alpha},\mathcal{S}_K)}.
\end{aligned}
\end{equation*}
Notice now that
\begin{align*}
 \int_{K} \mathfrak{R}_{K} (r-\pi_W r) \diff x &\leq \| \mathfrak{R}_{K}\|_{L^2(K)} \| r-\pi_W r \|_{L^2(K)} \\ &\lesssim h_K D_{K}^{\frac{\alpha}{2}}\| \mathfrak{R}_{K}\|_{L^2(K)} \| \nabla r \|_{\bL^2(\dist{z}^{-\alpha},\mathcal{S}_K)},
\end{align*}
where we have used \cite[Proposition 4]{MR3892359}. Similar arguments yield, upon using \cite[Proposition 5]{MR3892359}, the control of the jump term. We can thus invoke the inf--sup condition \eqref{eq:inf-sup-for-heat} to arrive at the estimate
 \begin{equation}
 \| \nabla \varphi \|_{\bL^2(\dist{z}^{\alpha},\Omega)} \lesssim \sup_{r \in H_0^1(\dist{z}^{-\alpha},\Omega)} \frac{\int_{\Omega} \nabla \varphi \cdot \nabla r}{\| \nabla r\|_{\bL^2(\dist{z}^{-\alpha},\Omega)}}
 \lesssim \mathfrak{E}_h.
 \label{eq:est_varphi}
\end{equation}

\emph{Step 6.} Collecting the estimates \eqref{eq:est_Phi_and_psi} and  \eqref{eq:est_varphi} we obtain the reliability bound \eqref{eq:reliability}. This concludes the proof.
\end{proof}

\subsection{Local efficiency estimates}

In this section, we analyze efficiency properties for the local error indicator $\mathcal{E}_K$ on the basis of standard bubble function arguments \cite{Verfurth}. Before proceeding with such analysis, we introduce the following notation: For an edge, triangle or tetrahedron $G$, let $\mathcal{V}(G)$ be the set of vertices of $G$. With this notation at hand, we introduce the following standard element and edge bubble functions. Let $K \in \T_h$ and $S \in \Sides$. We define
\begin{equation}
\label{eq:bubble_standard}
\Upsilon_{K}:=(d+1)^{d+1}\prod_{\mathrm{v}\in\mathcal{V}(K)}\lambda^{}_{\mathrm{v}|K},
\qquad
\Upsilon_{S}:=d^{d}_{}\prod_{\mathrm{v}\in\mathcal{V}(S)}\lambda_{\mathrm{v}|K} \textrm{ with } K\in\mathcal{N}_{S}.
\end{equation}
In these formulas, by $\lambda_{\mathrm{v}|K}$, we denote the barycentric coordinate function associated to $\mathrm{v} \in \mathcal{V}(K)$.

We will also make use of the following bubble functions, whose construction we owe to \cite[Section 5.2]{AGM}. Given $K \in \T_h$, we introduce $\Psi_K$, which satisfies  
$0 \leq \Psi_K \leq 1$,
\begin{equation}
\label{eq:bubble_T}
\Psi_K(z) = 0, \qquad |K| \lesssim \int_K \Psi_K, \qquad \| \GRAD \Psi_K \|_{\bL^{\infty}(R_K)} \lesssim h_K^{-1},
\end{equation}
and there exists a simplex $K^{*} \subset K$ such that $R_{K}:= \supp(\Psi_K) \subset K^{*}$. Notice that, since $\Psi_K$ satisfies \eqref{eq:bubble_T}, we have that, for every $m \in \mathbb{N}$
\begin{equation}
\label{eq:aux_bubble_T}
 \| \theta \|_{L^2(R_K)} \lesssim \| \Psi_K^{\frac{1}{2}} \theta \|_{L^2(R_K)} \quad \forall \theta \in \mathbb{P}_{m}(R_K),
\end{equation}
where the hidden constant depends on $m$, but does not depend on $\theta$ or $K$. Given $S \in \Sides$, we also introduce an edge bubble function $\Psi_S$, which satisfies $0 \leq \Psi_S \leq 1$,
\begin{equation}
\label{eq:bubble_S}
\Psi_S(z) = 0, \qquad |S| \lesssim \int_S \Psi_S, \qquad \| \GRAD \Psi_S \|_{ \bL^{\infty}(R_{S} ) } \lesssim h_S^{-1},
\end{equation}
and $R_S:= \supp(\Psi_S)$ is such that, if $\mathcal{N}_{S} = \{ K, K' \}$, there are simplices $K_{*} \subset K$ and $K_{*}' \subset K'$ such that $R_S \subset K_{*} \cup K_{*}' \subset K \cup K'$.

The following identities are essential to perform the upcoming local efficiency analysis. Invoke \eqref{eq:term_I}, \eqref{eq:RP_1}, and \eqref{eq:Xi} to arrive at
\begin{multline}
\label{eq:velocity_error_bubble}
\sum_{K \in \T_h} \int_{K} \Res_K \cdot \bv \diff x + \sum_{S \in \Sides} \int_S \Jump_S \cdot \bv \diff s 
=
\int_{\Omega} \nu \nabla \boldsymbol{\Phi} : \nabla \bv \diff x
=
\int_{\Omega} \left( \nu \nabla \ee_\ue: \nabla \bv   \right.
\\
\left.
-  \ee_{\pe} \DIV \bv
+(\ue\cdot\GRAD)\ee_\ue\cdot \bv + (\ee_\ue \cdot \GRAD)\ue_h \cdot \bv 
+ \frac12 \DIV \ee_\ue \ue_h \cdot \bv- \ee_{\Te}  \mathbf{g} \cdot \bv \right) \diff x
\end{multline}
for all $\bv\in \bH^1_0(\Omega)$. We recall the reader that $\Res_K$ and $\Jump_S$ are defined in \eqref{eq:residuals_navier}. Similarly, for every $r \in H_0^1(\dist{z}^{-\alpha},\Omega)$, we have
\begin{multline}
\label{eq:temperature_error_buble}
\displaystyle
\sum_{K \in \T_h} \int_{K} \mathfrak{R}_{K} r \diff x + \sum_{S \in \Sides} \int_S \mathfrak{J}_S r \diff s
\diff x + \langle \mathfrak{h} \delta_z , r \rangle = \int_{\Omega} \nabla \varphi \cdot \nabla r \diff x
\\
=\int_{\Omega} \left( \kappa \nabla \ee_{\Te} \cdot \nabla r -  \Te \ee_{\ue} \cdot \nabla r -  \ee_{\Te} \ue_h \cdot \nabla r \right)\diff x.
\end{multline}

We are now in position to provide the following local efficiency result.

\begin{theorem}[local efficiency]
Let $(\ue,\pe, \Te) \in \bH_0^1(\Omega) \times L_0^2(\Omega) \times H_0^1(\dist{z}^{\alpha},\Omega)$ be a solution to \eqref{eq:BusiWeak} and $(\ue_{h},\pe_{h}, \Te_h) \in \bX_h \times M_h \times W_h$ be its finite element approximation obtained as solution to \eqref{eq:Busih}. Let $\alpha \in (d-2,d)$. In the framework of Theorem \ref{thm:upper_bound_for_error}, we have the following local efficiency estimate:
\begin{equation}
\mathcal{E}_K \lesssim \interleave (\ee_\ue,\ee_\pe,\ee_{\Te}) \interleave_{\mathcal{S}_K},
\label{eq:local_efficiency}
\end{equation}
where the hidden constant is independent of the continuous and discrete solutions, the size of the elements in the mesh $\T_h$, and $\#\T_h$.
\end{theorem}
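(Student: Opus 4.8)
The strategy is the classical Verf\"urth bubble-function argument, carried out term by term for the six pieces that constitute $\mathcal{E}_K^2$: the three Navier--Stokes contributions ($h_K\|\Res_K\|_{\bL^2(K)}$, $\|\DIV\ue_h\|_{L^2(K)}$, $h_K^{1/2}\|\Jump_S\|_{\bL^2(\partial K\setminus\partial\Omega)}$) and the three heat contributions ($h_K D_K^{\alpha/2}\|\mathfrak R_K\|_{L^2(K)}$, $h_K^{1/2}D_K^{\alpha/2}\|\mathfrak J_S\|_{L^2(\partial K\setminus\partial\Omega)}$, and the Dirac term $|\mathfrak h|^{1/2}h_K^{(\alpha+2-d)/2}$). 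The key tools are the identities \eqref{eq:velocity_error_bubble} and \eqref{eq:temperature_error_buble}, which express the residuals against a test function in terms of the errors $\ee_\ue,\ee_\pe,\ee_{\Te}$, together with the standard bubbles $\Upsilon_K,\Upsilon_S$ from \eqref{eq:bubble_standard} for the unweighted Navier--Stokes terms and the specially constructed bubbles $\Psi_K,\Psi_S$ from \eqref{eq:bubble_T}, \eqref{eq:bubble_S} (which vanish at $z$) for the weighted heat terms.

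\textbf{Navier--Stokes terms.} First I would bound $\|\DIV\ue_h\|_{L^2(K)}$: since $\ue$ is solenoidal, $\DIV\ee_\ue = -\DIV\ue_h$, so $\|\DIV\ue_h\|_{L^2(K)}=\|\DIV\ee_\ue\|_{L^2(K)}\le\|\nabla\ee_\ue\|_{\bL^2(K)}$, immediately controlled by the energy norm on $K\subset\mathcal{S}_K$. Next, for the element residual, take $\bv=\Upsilon_K\Res_K$ (extended by zero), which is supported in $K$ and vanishes on $\partial K$; then \eqref{eq:velocity_error_bubble} with only the single element $K$ surviving on the left gives, after using \eqref{eq:aux_bubble_T}-type equivalence $\|\Res_K\|_{L^2(K)}^2\lesssim\int_K\Upsilon_K|\Res_K|^2$ and the inverse estimates $\|\Upsilon_K\Res_K\|_{L^2(K)}\lesssim\|\Res_K\|_{L^2(K)}$, $\|\nabla(\Upsilon_K\Res_K)\|_{\bL^2(K)}\lesssim h_K^{-1}\|\Res_K\|_{L^2(K)}$, the bound
\[
\|\Res_K\|_{\bL^2(K)}^2\lesssim h_K^{-1}\bigl(\nu\|\nabla\ee_\ue\|_{\bL^2(K)}+\|\ee_\pe\|_{L^2(K)}+g\|\ee_{\Te}\|_{\bL^2(\varpi,K)}+\text{convective terms}\bigr)\|\Res_K\|_{\bL^2(K)},
\]
where the convective terms $(\ue\cdot\nabla)\ee_\ue$, $(\ee_\ue\cdot\nabla)\ue_h$, $\tfrac12\DIV\ee_\ue\,\ue_h$ are estimated by H\"older and the embedding $\bH^1_0\hookrightarrow\bL^4$; the $g\|\ee_{\Te}\|_{\bL^2(\varpi,K)}$ term is handled with \eqref{eq:boundedness_1} localized to $K$ (here one uses $\varpi\in A_2(\Omega)$, so that near $z$ the weight is an honest $A_2$ weight and $\ee_{\Te}$ can be paired with $\bg\cdot\bv$). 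Dividing by $\|\Res_K\|_{\bL^2(K)}$ and multiplying by $h_K$ yields $h_K\|\Res_K\|_{\bL^2(K)}\lesssim\interleave(\ee_\ue,\ee_\pe,\ee_{\Te})\interleave_K$. The jump term is treated analogously with $\bv=\Upsilon_S\Jump_S$ extended to the patch $\mathcal{N}_S$: using \eqref{eq:velocity_error_bubble}, moving the already-estimated volume residuals of the two adjacent elements to the right side, and the scaled trace and inverse inequalities, one gets $h_K^{1/2}\|\Jump_S\|_{\bL^2(S)}\lesssim\interleave(\ee_\ue,\ee_\pe,\ee_{\Te})\interleave_{\mathcal{N}_S}$.

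\textbf{Heat terms and the Dirac contribution.} For $\mathfrak R_K$ and $\mathfrak J_S$ the argument mirrors the above but uses the weighted bubbles $\Psi_K,\Psi_S$, which are crucial because the natural test function must lie in $H^1_0(\dist{z}^{-\alpha},\Omega)$ and, since $\Psi_K(z)=\Psi_S(z)=0$, the Dirac term $\langle\mathfrak h\delta_z,r\rangle$ in \eqref{eq:temperature_error_buble} drops out when $r$ is built from these bubbles. Concretely, take $r=\Psi_K\mathfrak R_K$, use $\|\mathfrak R_K\|_{L^2(K)}^2\lesssim\int_K\Psi_K|\mathfrak R_K|^2$ together with the weighted versions of the inverse estimates (the weight $\dist{z}^\alpha$ is locally bounded above and below on $K$ by $D_K^\alpha$ up to shape-regularity constants when $z\notin K$, and is absorbed into the $\int_K\Psi_K|\mathfrak R_K|^2\approx D_K^{-\alpha}\|\Psi_K^{1/2}\mathfrak R_K\|_{L^2(\dist{z}^\alpha,K)}^2$ bookkeeping when $z\in K$, cf.\ \cite[Proposition 4, Proposition 5]{MR3892359}), and \eqref{eq:boundedness_2} localized to control the $\Te\ee_\ue\cdot\nabla r$ and $\ee_{\Te}\ue_h\cdot\nabla r$ terms by $\|\nabla\ee_\ue\|_{\bL^2(\mathcal{S}_K)}$ and $\|\nabla\ee_{\Te}\|_{\bL^2(\varpi,\mathcal{S}_K)}$ respectively. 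This yields $h_K D_K^{\alpha/2}\|\mathfrak R_K\|_{L^2(K)}\lesssim\interleave(\ee_\ue,\ee_\pe,\ee_{\Te})\interleave_{\mathcal{S}_K}$, and the analogous jump estimate with $\Psi_S$. Finally, for the Dirac term, if $z\in K$ one chooses a test function $r$ that is $O(1)$ at $z$ but supported in $\mathcal{S}_K$ and still lies in $H^1_0(\dist{z}^{-\alpha},\Omega)$ with $\|\nabla r\|_{\bL^2(\dist{z}^{-\alpha},\mathcal{S}_K)}\lesssim h_K^{(d-\alpha-2)/2}$; isolating $\langle\mathfrak h\delta_z,r\rangle$ in \eqref{eq:temperature_error_buble}, moving the (already-bounded) residual and jump terms and the error terms to the other side, gives $|\mathfrak h|\,h_K^{(\alpha+2-d)/2}\lesssim\interleave(\ee_\ue,\ee_\pe,\ee_{\Te})\interleave_{\mathcal{S}_K}$. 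Summing up the six bounds over the finitely many elements in $\mathcal{S}_K$ and using finite overlap of patches delivers \eqref{eq:local_efficiency}.

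\textbf{Main obstacle.} The routine parts are the standard Navier--Stokes residual and jump estimates. The delicate point is the weighted heat contribution, and in particular the Dirac term: one must exhibit a single test function, built from the bubbles of \cite[Section 5.2]{AGM}, that simultaneously (i) is nonzero at $z$ to "see" $\delta_z$, (ii) belongs to the dual-compatible space $H^1_0(\dist{z}^{-\alpha},\Omega)$, and (iii) has a weighted $H^1$ norm scaling exactly like $h_K^{(d-\alpha-2)/2}$ so that the power of $h_K$ in $\mathfrak E_K^2$ matches. Equally, one must be careful with the weight bookkeeping $D_K^\alpha$ versus the genuine weight $\dist{z}^\alpha$ in the elements neighboring $z$, where these differ by unbounded factors; here the vanishing of $\Psi_K,\Psi_S$ at $z$ and the weighted inverse/trace inequalities from \cite{MR3892359,AGM} are what make the estimates close. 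I expect the bulk of the technical work to reside precisely in verifying these weighted scaling identities and in checking that the lower-order (convective and buoyancy) terms in \eqref{eq:velocity_error_bubble}--\eqref{eq:temperature_error_buble} are absorbed with constants independent of $h$ and of $z$'s location relative to the mesh.
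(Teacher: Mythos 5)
Your proposal is correct and follows essentially the same route as the paper: standard bubbles $\Upsilon_K,\Upsilon_S$ for the Navier--Stokes residual and jump, the identity $\DIV\ue_h=-\DIV\ee_\ue$, the bubbles $\Psi_K,\Psi_S$ of \cite[Section 5.2]{AGM} (vanishing at $z$) together with the weighted inverse estimates of \cite{MR3892359} for the heat residual and jump, and a cut-off $\eta$ with $\eta(z)=1$, $\supp\eta\subset\mathcal{S}_K$, $\|\nabla\eta\|_{L^\infty}\approx h_K^{-1}$ to extract the Dirac contribution with the scaling $\|\nabla\eta\|_{\bL^2(\dist{z}^{-\alpha},\mathcal{S}_K)}\lesssim h_K^{(d-2-\alpha)/2}$. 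The only detail worth flagging is the power of $|\mathfrak{h}|$ in the final Dirac bound, which your write-up (like the paper's) leaves slightly loose relative to the $|\mathfrak{h}|\,h_K^{\alpha+2-d}$ term appearing in $\mathfrak{E}_K^2$.
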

\begin{proof}
We examine each of the contributions of $\mathcal{E}_K$ separately so the proof involves several steps.

\emph{Step 1.} Let $K \in \T_h$. We bound the term $h_K^2 \| \Res_K \|^2_{\bL^2(K)}$ in \eqref{eq:estimator_Navier}. Invoke standard properties that the bubble function $\Upsilon_K$ satisfies to obtain the basic estimate
\begin{equation}
\label{eq:ResK_Theta}
\| \Res_K \|^2_{\bL^2(K)} 
= \int_{K} |\Res_K|^2 \diff x 
\lesssim  \int_{K} |\Res_K|^2 \Upsilon_K \diff x
= \int_{K} \Res_K \cdot \boldsymbol{\Theta}_K  \diff x,
\end{equation}
where $\boldsymbol{\Theta}_K := \Upsilon_K \Res_K $. Set $\bv = \boldsymbol{\Theta}_K$ as a test function on identity \eqref{eq:velocity_error_bubble}, utilize that $\supp \boldsymbol{\Theta}_K \subset K$, and standard inequalities to arrive at
\begin{multline}
\int_{K} \Res_K \cdot \boldsymbol{\Theta}_K  \diff x 
\leq 
\bigg(
\nu \| \nabla \ee_{\ue} \|_{\bL^2(K)} 
+ \| \ee_{\pe} \|_{L^2(K)}  + C_{e,1}g \|\GRAD \ee_{\Te} \|_{\bL^2(\dist{z}^\alpha,K)} 
\\
+ \frac{C_{4\to2}^2}{\nu} \| \GRAD \ue \|_{\bL^2(K)}\nu \|\GRAD \ee_\ue \|_{\bL^2(K)} 
+ \frac{3 C_{4\to2}^2}{2\nu} \| \GRAD \ue_h \|_{\bL^2(K)}\nu \|\GRAD \ee_\ue \|_{\bL^2(K)} 
\bigg)
\|\GRAD \boldsymbol{\Theta}_K   \|_{\bL^2(K)}.
\end{multline}
Next, we utilize the smallness assumption \eqref{eq:assumptions_for_Ritz} to obtain that
\begin{equation}
\label{eq:estimate_ResK_aux}
\int_{K} \Res_K \cdot \boldsymbol{\Theta}_K  \diff x  \lesssim \left( \nu \| \nabla \ee_{\ue} \|_{\bL^2(K)} + \| \ee_{\pe} \|_{L^2(K)} + \|\GRAD \ee_{\Te} \|_{\bL^2(\dist{z}^\alpha,K)} \right) \| \nabla \boldsymbol{\Theta}_K\|_{\bL^2(K)}.
\end{equation}
This, on the basis of  \eqref{eq:ResK_Theta} and the estimate $ \| \nabla \boldsymbol{\Theta}_K\|_{\bL^2(K)} \lesssim h_K^{-1} \| \Res_K\|_{\bL^2(K)}$, yields
\begin{equation}
\label{eq:estimate_ResK}
h_K^2 \| \Res_K \|^2_{\bL^2(K)} \lesssim  \nu^2 \| \nabla \ee_{\ue} \|^2_{\bL^2(K)} + \| \ee_{\pe} \|^2_{L^2(K)} + \|\GRAD \ee_{\Te} \|^2_{\bL^2(\dist{z}^\alpha,K)} .
\end{equation}

\emph{Step 2}. Let $K \in \T$ and $S \in \Sides_K$. We bound $h_K \|  \Jump_S  \|^2_{\bL^2(S)}$. Define $\boldsymbol{\Lambda}_S := \Upsilon_S \Jump_S$, where $\!\Jump_S$ and $\Upsilon_S$ are as in \eqref{eq:residuals_navier} and \eqref{eq:bubble_standard}, respectively. Basic properties of $\Upsilon_K$ yield
\begin{equation}
\label{eq:JumpS_Lambda}
\| \Jump_S \|^2_{\bL^2(S)} = \int_{S} |\Jump_S|^2 \diff s \lesssim  \int_{S} |\Jump_S|^2 \Upsilon_S \diff s
= \int_{S} \Jump_S \cdot \boldsymbol{\Lambda}_S  \diff s.
\end{equation}
Notice that $\supp \boldsymbol{\Lambda}_S \subset \mathcal{N}_S$. Setting $\bv = \boldsymbol{\Lambda}_S$ in \eqref{eq:velocity_error_bubble} yields
\begin{multline*}
\int_S \Jump_S \cdot \boldsymbol{\Lambda}_S \diff s 
= \sum_{K' \in \Ne_S} \int_{K'} 
\left( \nu \nabla \ee_\ue: \nabla \boldsymbol{\Lambda}_S 
-  \ee_{\pe} \DIV \boldsymbol{\Lambda}_S
+(\ue\cdot\GRAD)\ee_\ue\cdot \boldsymbol{\Lambda}_S 
\right.
\\
\left.
+ (\ee_\ue \cdot \GRAD)\ue_h \cdot \boldsymbol{\Lambda}_S + \frac12 \DIV \ee_\ue \ue_h \cdot \boldsymbol{\Lambda}_S - \ee_{\Te}  \mathbf{g} \cdot \boldsymbol{\Lambda}_S - \Res_{K'} \cdot \boldsymbol{\Lambda}_S 
\right) \diff x.
\end{multline*}
Similar arguments to the ones used to obtain \eqref{eq:estimate_ResK_aux} allow us to obtain
\begin{multline}
\label{eq:Jump_aux}
\int_S \Jump_S \cdot \boldsymbol{\Lambda}_S \diff s 
\lesssim 
\sum_{K' \in \Ne_S}  \| \Res_K \|_{\bL^2(K')}  \| \boldsymbol{\Lambda}_S \|_{\bL^2(K')}
\\
+ \sum_{K' \in \Ne_S} \left( \nu \| \nabla \ee_{\ue} \|_{\bL^2(K')} + \| \ee_{\pe} \|_{L^2(K')} + \|\GRAD \ee_{\Te} \|_{\bL^2(\dist{z}^\alpha,K')} \right)  \| \nabla \boldsymbol{\Lambda}_S \|_{\bL^2(K')}.
\end{multline}

On the other hand, by shape regularity, we have that
\begin{align*}
  \| \boldsymbol{\Lambda}_S \|_{\bL^2(K')} & \approx |K'|^{\frac{1}{2}} |S|^{-\frac{1}{2}} \| \boldsymbol{\Lambda}_S \|_{\bL^2(S)} \approx h_{K'}^{\frac{1}{2}} \| \boldsymbol{\Lambda}_S \|_{\bL^2(S)} \approx h_{K'}^{\frac{1}{2}} \| \Jump_S \|_{\bL^2(S)},
  \\
 \| \nabla \boldsymbol{\Lambda}_S \|_{\bL^2(K')} & \lesssim h_{K'}^{-1}  \| \boldsymbol{\Lambda}_S \|_{\bL^2(K')}\approx h_{K'}^{-\frac{1}{2}} \| \Jump_S \|_{\bL^2(S)}.
\end{align*}

In view of \eqref{eq:JumpS_Lambda}, the bound \eqref{eq:Jump_aux}, the estimates for $\boldsymbol{\Lambda}_S$ previously stated, and the estimate for $\| \Res_K \|_{\bL^2(K')}$ obtained in  \eqref{eq:estimate_ResK} we are capable of obtaining that
\begin{equation*}
\label{eq:estimate_JumpS}
h_K \| \Jump_S \|^2_{\bL^2(S)} \lesssim \sum_{K' \in \Ne_S} \left( \nu^2 \| \nabla \ee_{\ue} \|^2_{\bL^2(K')} + \| \ee_{\pe} \|^2_{L^2(K')} + \|\GRAD \ee_{\Te} \|^2_{\bL^2(\dist{z}^\alpha,K')} \right).
\end{equation*}

\emph{Step 3.} We now bound the residual term associated with the incompressibility constraint. Since $\DIV \ue = 0$, for any $K \in \T$, we have
\begin{equation*}
\label{eq:estimate_DIV}
\| \DIV  \ue_h  \|^2_{L^2(K)} = \| \DIV  \ee_{\ue}  \|^2_{L^2(K)}.
\end{equation*}

\emph{Step 4.} Let $K \in \T_h$. We bound $h_K^2 D_K^{\alpha} \| \mathfrak{R}_K  \|^2_{L^2(K)}$ in \eqref{eq:estimator_heat}. Define $\phi_K:=  \Psi_K \mathfrak{R}_K$, where $\mathfrak{R}_K$ and $\Psi_K$ are as in \eqref{eq:residuals_heat} and \eqref{eq:bubble_T}, respectively. Invoke \eqref{eq:aux_bubble_T} to arrive at
\begin{equation}
\label{eq:aux_mathfrakR}
 \| \mathfrak{R}_K  \|^2_{L^2(K)} \lesssim \int_{K} \mathfrak{R}_K \phi_K  \diff x.
\end{equation}
Set $r =\phi_K \in H_0^1(\dist{z}^{-\alpha},\Omega)$ as a test function in \eqref{eq:temperature_error_buble}. Utilize that $R_K = \supp \phi_K \subset K^{*} \subset K$, the property $\phi_K(z) = 0$, and the estimate \eqref{eq:boundedness_2} to obtain
\begin{multline}
\int_{K} \mathfrak{R}_{K} \phi_K \diff x 
\leq \left( \kappa \| \nabla \ee_{\Te}\|_{L^2(\dist{z}^{\alpha},K)}
+ C_{e,2} \| \nabla \ee_{\ue} \|_{\bL^2(K)}\| \nabla \Te \|_{\bL^2(\dist{z}^{\alpha},K)}
\right.
\\
\left.
+
C_{e,2} \| \nabla \ue_h \|_{\bL^2(K)}\| \nabla \ee_{\Te} \|_{\bL^2(\dist{z}^{\alpha},K)} \right) \| \nabla \phi_K \|_{\bL^2(\dist{z}^{-\alpha},K)}.
\label{eq:estimate_frakResK_aux}
\end{multline}
Invoke \eqref{eq:aux_mathfrakR}, the estimate $\| \nabla \phi_K \|_{\bL^2(\dist{z}^{-\alpha},K)} \lesssim h_K^{-1}D_K^{-\alpha/2} \| \mathfrak{R}_{K} \|_{\bL^2(K)}$ \cite[Proposition 8]{MR3892359}, and the smallness assumption \eqref{eq:assumptions_for_Ritz} to conclude that
\begin{equation}
\label{eq:estimate_frakResK}
h_K^2 D_K^{\alpha}\| \mathfrak{R}_K \|^2_{\bL^2(K)} \lesssim 
% \left( \| \nabla \ee_{\Te}\|^2_{L^2(\dist{z}^{\alpha},K)} + \nu \| \nabla \ee_{\ue} \|^2_{\bL^2(K)} \right).
 (\kappa^2+\nu^2) \| \nabla \ee_{\Te}\|^2_{L^2(\dist{z}^{\alpha},K)} + \nu^2 \| \nabla \ee_{\ue} \|^2_{\bL^2(K)} .
\end{equation}

\emph{Step 5.} Let $K \in \T_h$ and $S \in \Sides_K$. The bound of  $h_K D_K^{\alpha} \| \mathfrak{J}_S \|^2_{L^2(S)}$ follows similar arguments as the ones developed in Step 2 upon utilizing \eqref{eq:estimate_frakResK}. In fact, we have
\begin{equation*}
\label{eq:estimate_JumpfrakS}
h_K D_K^{\alpha} \| \mathfrak{J}_S \|^2_{L^2(S)} \lesssim \sum_{K' \in \Ne_S} \left( 
(\nu^{2}+\kappa^{2})\| \nabla \ee_{\Te}\|^2_{L^2(\dist{z}^{\alpha},K')} + \nu^{2} \| \nabla \ee_{\ue} \|^2_{\bL^2(K')}
\right).
\end{equation*}

\emph{Step 6.} We now control the term $|\mathfrak{h}|h_K^{\alpha+2-d} \# (\{z\} \cap K)$ in \eqref{eq:estimator_heat}. Let $K\in\T_h$, and notice first that, if $T\cap\{z\}=\emptyset$, then estimate \eqref{eq:local_efficiency}, follows from the estimates derived in the previous steps. If, on the other      hand, $K\cap\{z\}=\{z\}$, then we must obtain a bound for the term $|\mathfrak{h}|h_K^{\alpha+2-d}$. To do so we follow the arguments developed in the proof of \cite[Theorem 5.3]{AGM} that yield the existence of a smooth $\eta$ such that
\begin{equation*}\label{eq:morin}
\eta(z)=1,\quad
\|\eta\|_{L^{\infty}(\Omega)}=1,\quad
\|\nabla\eta\|_{L^{\infty}(\Omega)}=h^{-1}_{K},\quad
\Omega_{\eta}:=\textrm{supp}(\eta)\subset\mathcal{S}_{K},
\end{equation*}
where $\mathcal{S}_{K}$ is defined in \eqref{eq:NeTstar}. With this function at hand, define $r_{\eta}:=\mathfrak{h}\eta\in W^{1,\infty}_0(\Omega) \subset H_0^1(\dist{z}^{-\alpha},\Omega)$ and notice that
\begin{equation*}
\begin{aligned}
 |\mathfrak{h}|^{2} 
 =  
\langle \mathfrak{h}\delta_{z},r_{\eta} \rangle
& = 
\int_{\Omega}(\kappa\nabla \Te \cdot\nabla r_{\eta}-\Te \ue\cdot\nabla r_{\eta})\diff x 
\\
& = \int_{\Omega} \nabla \varphi \cdot \nabla r_{\eta} \diff x
%\int_{\Omega} \left( \kappa \nabla \ee_{\Te} \cdot \nabla r_{\eta} -  T \ee_{\ue} \cdot \nabla r_{\eta} -  \ee_{\Te} \ue_h \cdot \nabla r_{\eta} \right)\diff x 
+ \int_{\Omega}(\kappa\nabla \Te_h \cdot\nabla r_{\eta}- \Te_h \ue_{h}\cdot\nabla r_{\eta})\diff x,
\end{aligned}
\end{equation*}
where we have used equation \eqref{eq:RP_3}. We thus apply similar arguments to the ones that led to \eqref{eq:estimate_frakResK_aux}, integration by parts, and basic estimates to arrive at
\begin{multline*}
 |\mathfrak{h}|^{2}  \lesssim \left(
(\kappa+\nu)\|\nabla \ee_{\Te}\|_{\bL^2(\dist{z}^{\alpha},\mathcal{S}_{K})} + \nu\|\nabla \ee_{\ue}\|_{\bL^2(\mathcal{S}_K)}
\right)
\|\nabla r_{\eta}\|_{\bL^2(\dist{z}^{-\alpha},\mathcal{S}_K)} 
\\
+
\sum_{K'\in \T_h : K' \subset \mathcal{S}_K} 
\left(
\| \mathfrak{R}_K \|^{}_{\bL^2(K')}
\|r_{\eta}\|_{L^2(K')} 
+
%\sum_{K'\in T:  K' \subset \mathcal{S}_K}
\sum_{S \in \Sides_{K'}: S \not\subset\partial \mathcal{S}_K}
\| \mathfrak{J}_S \|^{}_{L^2(S)}
\|r_{\eta}\|_{L^2(S)}
\right),
\end{multline*}
where we have also used the the smallness assumption \eqref{eq:assumptions_for_Ritz}. Using the shape regularity of the mesh, in conjunction with the fact that, since $z\in K$, $h_{K}\approx D_{K}$, the bounds
\[
\|\nabla\eta\|_{\bL^2(\dist{z}^{-\alpha},\mathcal{S}_K)}
\lesssim
h_{K}^{\frac{d-2}{2}-\frac{\alpha}{2}},
\quad
\|\eta\|_{L^2(\mathcal{S}_K)}
\lesssim
h_{K}^{\frac{d}{2}},
\quad
\|\eta\|_{L^2(S)}
\lesssim
h_{K}^{\frac{d-1}{2}},
\]
allow us to conclude that 
\begin{gather*}
|\mathfrak{h}|^{}
\lesssim 
h_{K}^{\frac{d-2}{2}-\frac{\alpha}{2}}
\left(
(\kappa+\nu)\|\nabla \ee_{\Te}\|^{}_{\bL^2(\dist{z}^{\alpha},{\mathcal{S}_{K}})} + 
\nu^{}\|\nabla \ee_{\ue}\|^{}_{\bL^2(\dist{z}^{\alpha},{\mathcal{S}_{K}})}
\right)
\\
+
\sum_{K'\in \T_h : K' \subset \mathcal{S}_K} 
h_{K'}^{\frac{d-2}{2}-\frac{\alpha}{2}}
\Big(
h_{K'}D^{\frac{\alpha}{2}}_{K'}
\| \mathfrak{R}_K \|^{}_{\bL^2(K')}
% \|r_{\eta}\|_{L^2(K')} 
% \\
+
\sum_{S \in \Sides_{K'}: S \not\subset\partial \mathcal{S}_K}
h_{K'}^{\frac{1}{2}}D^{\frac{\alpha}{2}}_{K'}
\| \mathfrak{J}_S \|^{}_{L^2(S)}
\Big).
\end{gather*}

Finally, combining all the previous results \eqref{eq:local_efficiency} follows.
\end{proof}

%%%%%%%%%%%%%%%%%%%%%%%%%%%%%%%%%%%%%%%%%%
%%%%%%%%%%%%%%%%%%%%%%%%%%%%%%%%%%%%%%%%%%
%%%%%%%%%%%%%%%%%%%%%%%%%%%%%%%%%%%%%%%%%%
%%%%%%%%%%%%%%%%%%%%%%%%%%%%%%%%%%%%%%%%%%
%%%%%%%%%%%%%%%%%%%%%%%%%%%%%%%%%%%%%%%%%%

\section{Numerical experiments}
\label{sec:numerics}

In this section we conduct a series of numerical examples that illustrate the performance of the a posteriori error estimator we have devised and analyzed in section~\ref{sec:aposteriori}. The examples have been carried out with the help of a code that we implemented using \texttt{C++}. All matrices have been assembled exactly and global linear systems were solved using the multifrontal massively parallel sparse direct solver (MUMPS) \cite{MR1856597,MR2202663}. The \emph{element} and \emph{interelement} residuals are computed with the help of quadrature formulas which are exact. To visualize finite element approximations we have used the open--source application ParaView \cite{Ahrens2005ParaViewAE,Ayachit2015ThePG}. 

For a given partition $\T_h$, we solve \eqref{eq:Busih} with the discrete spaces $\mathbf{X}_h$, $M_h$, and $W_h$ given by \eqref{eq:th_V}, \eqref{eq:th_P}, and the space of continuous piecewise polynomial functions of degree two, respectively. To be precise, to adaptively solve the nonlinear system  \eqref{eq:Busih} we proceed as in Algorithm \ref{Algorithm}. 
%where the resolution of the nonlinear system \eqref{eq:defofDiscrFrakF1}-\eqref{eq:defofDiscrFrakF2} make use of 
We comment that, in Algorithm \ref{Algorithm-Fixed-Point}, for an initial partition $\T_{0}$, the initial guesses $\Te_{h}^{0} \in W_{h}$ and $(\ue_h^0,\pe_h^0)\in  \bX_h\times W_h$ are obtained as the respective solutions to the following problems:
\begin{equation*}
\int_\Omega \kappa \GRAD \Te_h^{0} \cdot \GRAD r_h~ \diff x = \langle \calH, r_h \rangle \quad \forall r_h \in W_h
\end{equation*}
and 
\begin{equation*}
\int_\Omega  \nu \GRAD \ue_h^{0} : \GRAD \bv_h -\int_\Omega \pe_h^{0} \DIV \bv_h \diff x = \int_\Omega \Te_h^{0}\bg \cdot \bv_h \diff x,
\qquad 
    \int_\Omega q_h \DIV \ue_h^0 \diff x = 0,
\end{equation*}
for all $\bv_h \in \bX_h$ and  $q_h \in M_h$, respectively. Once the discrete solution is obtained, we compute, for all $K\in \T_{h}$, the local a posteriori error indicators $\mathcal{E}_{K}$, defined in Section \ref{sub_section_estimator}, to drive the adaptive mesh refinement
procedure described in Algorithm \ref{Algorithm}. 
%We notice that every mesh $\T_{h}$ is adaptively refined by marking, for refinement, the elements $K\in\T_{h}$ that are such that step 3 in Algorithm \ref{Algorithm} holds. 
A sequence of adaptively refined meshes is thus generated from an initial mesh.
\footnotesize{
\begin{algorithm}[h!]
\caption{\textbf{Adaptive Algorithm}}
\label{Algorithm}
Input: Initial mesh $\T_{0}$, interior point $z\in\Omega$, and parameter $\nu$, $\kappa$, $\mathfrak{h}$, and $\alpha\in(0,2)$;\\
\textbf{1:} Solve the discrete problem \eqref{eq:Busih} by using \textbf{Algorithm} \ref{Algorithm-Fixed-Point}; \\  
\textbf{2:} For each $K \in \T_h$ compute the local error indicators $\mathcal{E}_K$ defined in Section \ref{sub_section_estimator};
\\
\textbf{3:} Mark an element $K \in \T_h$ for refinement if 
\vspace*{-0.3cm}
\[
  \mathcal{E}_K > \frac12 \max_{K' \in \T_h} \mathcal{E}_{K'};
  \vspace*{-0.3cm}
\]
\textbf{4:} From step $\boldsymbol{3}$, construct a new mesh, using a longest edge bisection algorithm \cite{MR1329875}. Set $i \leftarrow i + 1$, and go to step $\boldsymbol{1}$.
\end{algorithm}}
%%%%%%%%%%%%%%%%%%%%%%%%%%%%%%%%%5
\footnotesize{
\begin{algorithm}[h!]
\caption{\textbf{Fixed-Point Algorithm}}
\label{Algorithm-Fixed-Point}
Input: Initial guess $(\ue_{h}^{0},\pe_{h}^{0},\Te_{h}^{0})\in \bX_h\times M_h\times W_{h}$ and $\textrm{tol}=10^{-8}$.\\
\textbf{1:} For $i\geq 0$, find $(\bu_{h}^{i+1},p_{h}^{i+1})\in \bX_h\times M_h$ such that \vspace*{-0.3cm}
\begin{align*}
&\int_\Omega \left( \nu \GRAD \ue_h^{i+1} : \GRAD \bv_h + (\ue_h^{i}\cdot\GRAD) \ue_h^{i+1} \cdot \bv_h + \frac12 \DIV \ue_h^{i} \ue_h^{i+1} \cdot \bv_h \right) \diff x 
       -\int_\Omega \pe_h^{i+1} \DIV \bv_h \diff x \\
& =
 \int_\Omega \Te_h^{i}\bg \cdot \bv_h \diff x\quad \forall \bv_h \in \bX_h.
\end{align*}
Then, $\Te_h^{i+1} \in W_h$ is found as the solution of
\begin{equation*}
\int_\Omega \left( \kappa \GRAD \Te_h^{i+1} \cdot \GRAD r_h - \Te_h^{i+1} \ue_h^{i+1} \cdot \GRAD r_h \right) \diff x = \langle \mathfrak{h}\delta_{z}, r_h \rangle \quad \forall r_h \in W_h.
\end{equation*}
\textbf{2:} If $\|(\ue_{h}^{i+1},\pe_{h}^{i+1},\Te_{h}^{i+1})-(\ue_{h}^{i},\pe_{h}^{i},\Te_{h}^{i})\|^{}_{2}>\textrm
{tol}$, set $i \leftarrow i + 1$, and go to step $\boldsymbol{1}$. Otherwise, \textbf{return} $(\ue_{h},\pe_{h},\Te_{h})=(\ue_{h}^{i+1},\pe_{h}^{i+1},\Te_{h}^{i+1})$.
\end{algorithm}}
\normalsize

Finally, we denote the total number of degrees of freedom by $\textsf{Ndof} = \dim(\bX_h)+\dim(M_h)+\dim(W_{h})$.

We now explore the performance of the devised a posteriori error estimator in two problems with homogeneous Dirichlet boundary conditions on convex and non--convex domains. In all the numerical experiments we have considered $\nu=\kappa=1$, $\bg=[1,0]^{\intercal}$, $\mathfrak{h}=1$,  $z=(0.5,0.5)$, and different values for the exponent of the Muckenhoupt weight: $\alpha\in\{0.1,0.5,1.0,1.5,1.9\}$. We let
\begin{enumerate}[(i)]
\item $\Omega=(0,1)^2$, for example 1, and
\item $\Omega=(-1,1)^{2}\setminus[0,1)\times[-1,0)$, for example 2.
\end{enumerate}

In Figures \ref{fig_1} and \ref{fig_3} we present, within the setting of examples 1 and 2, respectively, experimental rates of convergence for the error estimator $\mathcal{E}_h$. We also present the initial meshes used in the adaptive algorithm. We observe that optimal experimental rates of convergence are attained for all the values of the parameter $\alpha$ that we have considered. We also observe that a better value of the estimator, at a fixed mesh, can be obtained for values of $\alpha$ closer to two. We notice that, when $\alpha$ is small, after a certain number of adaptive iterations, there are elements $K$ around $z$ such that $|K| \approx 10^{-16}$. This makes impossible more computations within the adaptive procedure.

In Figures \ref{fig_2} and \ref{fig_4} we present, for examples 1 and 2, respectively, a series of meshes obtained after 30 adaptive iterations.
%From the Figures we can see that optimal convergence rates are attained for all the admissible parameters within the range $\alpha\in(0,2)$. 
We observe that most of the refinement is concentrated around the singular source point. For the case of example 2, and after 30 adaptive refinements, the adaptive loop also concentrates the refinement around the reentrant corner when $\alpha\in[1,2)$.

Finally, in Figure \ref{fig_5} we present, for the setting of Example 2 with $\alpha=1.5$, $|\ue_h|$, its associated streamlines, the pressure $\pe_h$, and the temperature $\Te_h$ over a mesh containing 16105 elements and 8178 vertices; the latter being obtained after 65 iterations of our adaptive loop.

\begin{figure}[ht]
\begin{minipage}[c]{0.5\linewidth}
\centering
$\mathcal{E}_h$\\
\psfrag{alpha=0.1}{$\alpha=0.1$}
\psfrag{alpha=0.5}{$\alpha=0.5$}
\psfrag{alpha=1.0}{$\alpha=1.0$}
\psfrag{alpha=1.5}{$\alpha=1.5$}
\psfrag{alpha=1.9}{$\alpha=1.9$}
\psfrag{rate(h2)}{$\mathsf{Ndof}^{-1}$}
\includegraphics[trim={0 0 0 0},clip,width=4cm,height=4cm,scale=0.6]{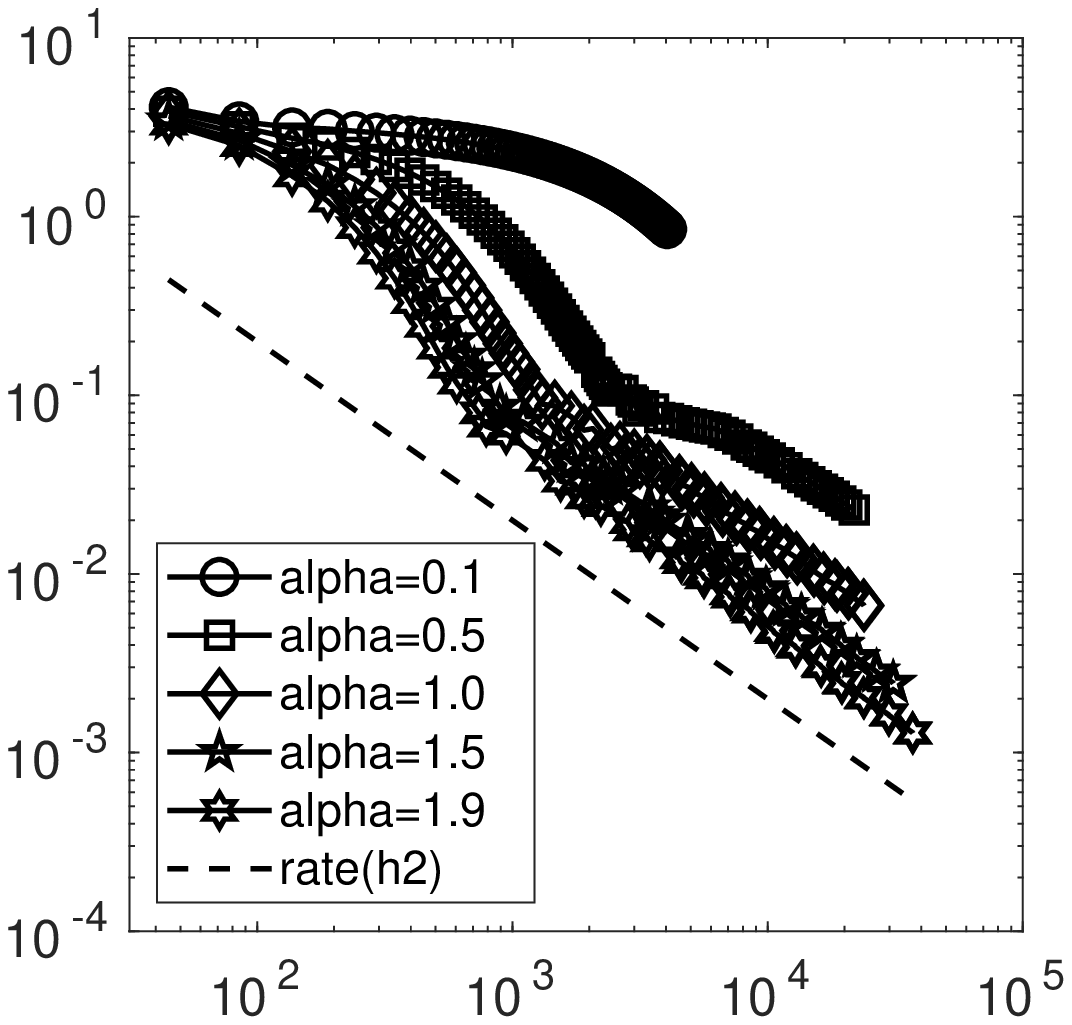}
\end{minipage}
\begin{minipage}[c]{0.5\linewidth}
\centering
\includegraphics[trim={0 0 0 0},clip,width=6cm,height=4cm,scale=1.0]{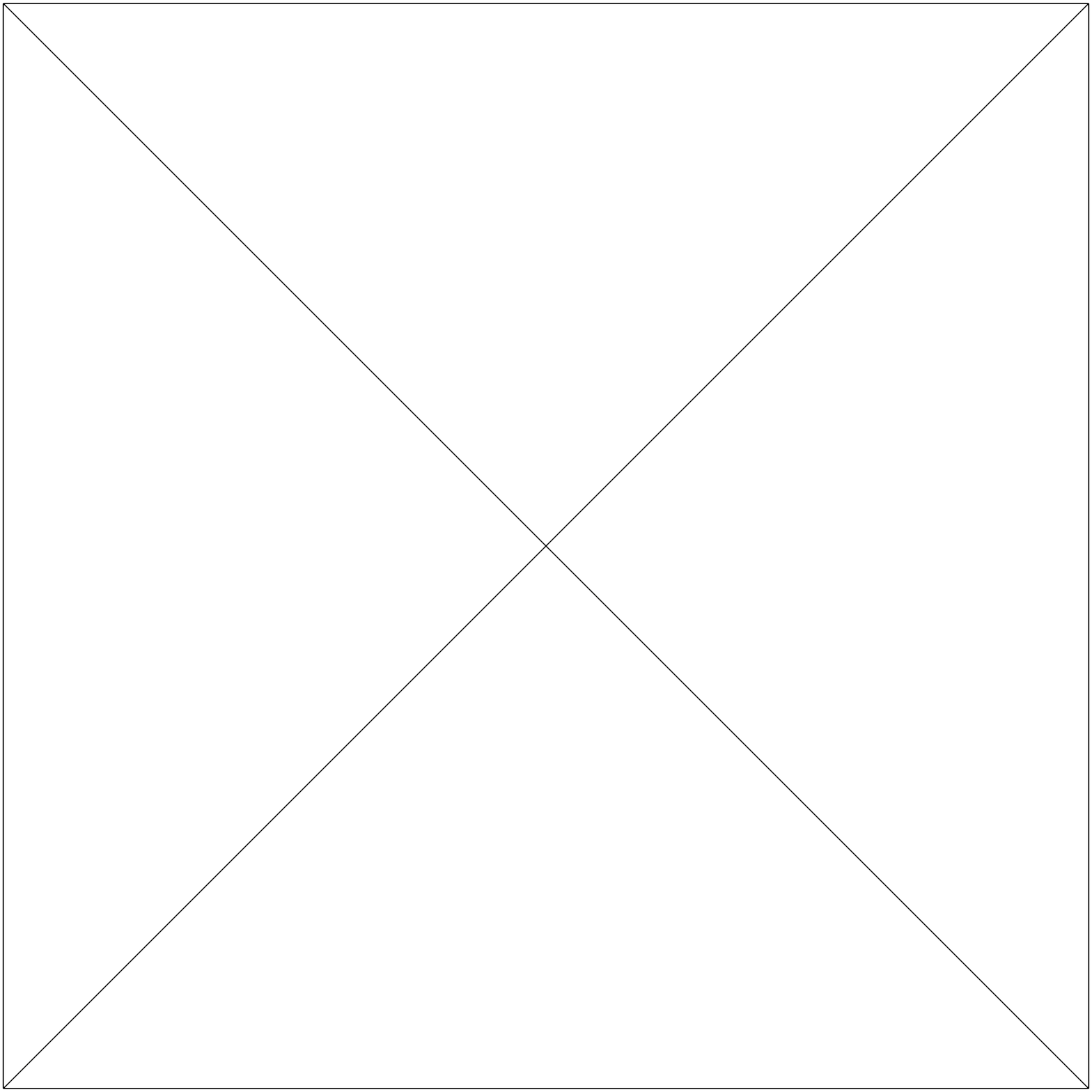}
\end{minipage}
\caption{Example 1: Experimental rates of convergence for the error estimator $\mathcal{E}_h$ considering $\alpha\in\{0.1,0.5,1.0,1.5,1.9\}$ (left) and the initial mesh used in the adaptive algorithm (right).}
\label{fig_1}
\end{figure}

\begin{figure}[ht]
\begin{minipage}[c]{0.32\linewidth}
\centering
$\alpha=0.1$
\includegraphics[trim={30cm 0 20cm 0},clip,width=4cm,height=3.7cm,scale=1.0]{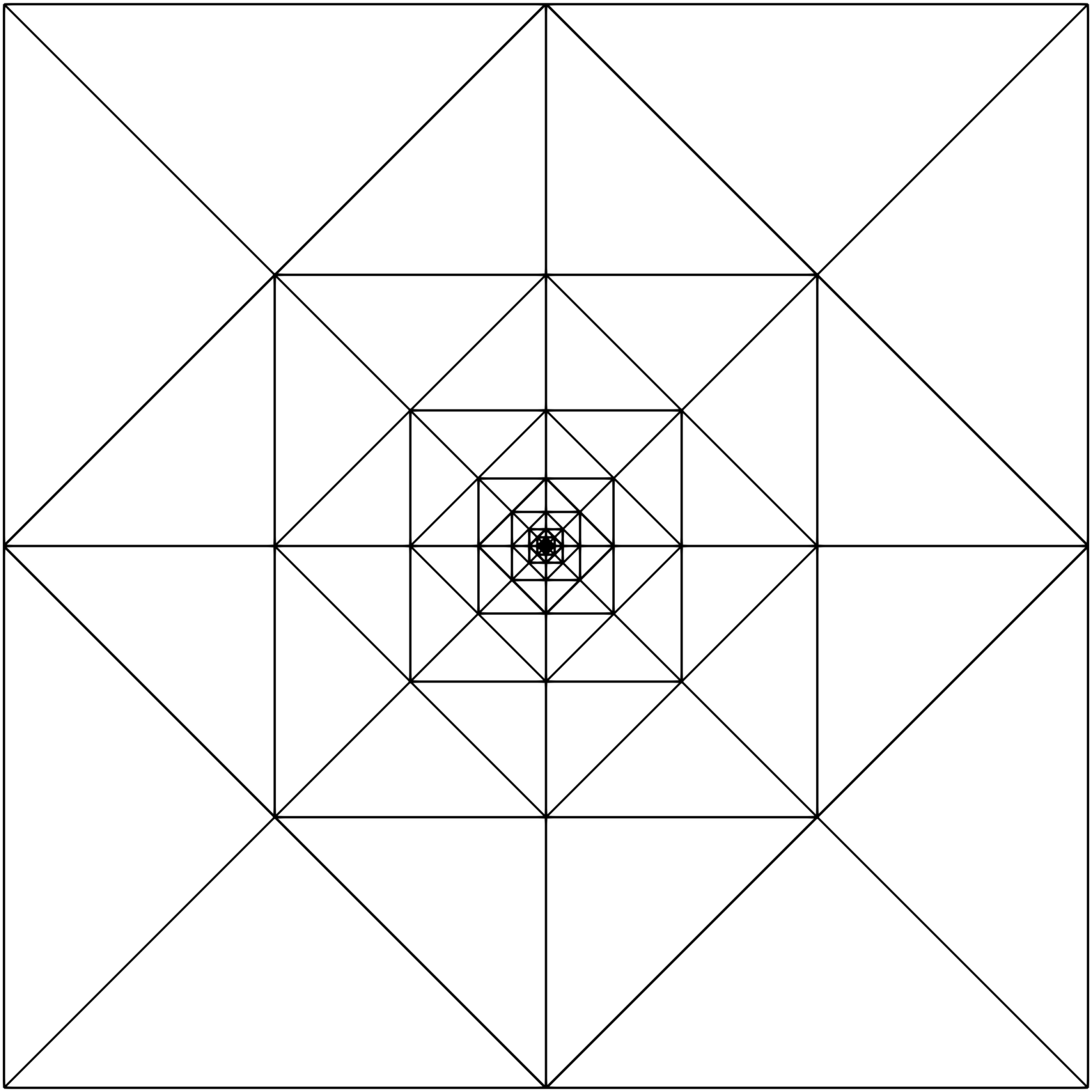}\\
(a)
\end{minipage}
\begin{minipage}[c]{0.32\linewidth}
\centering
$\alpha=0.5$
\includegraphics[trim={30cm 0 20cm 0},clip,width=4cm,height=3.7cm,scale=1.0]{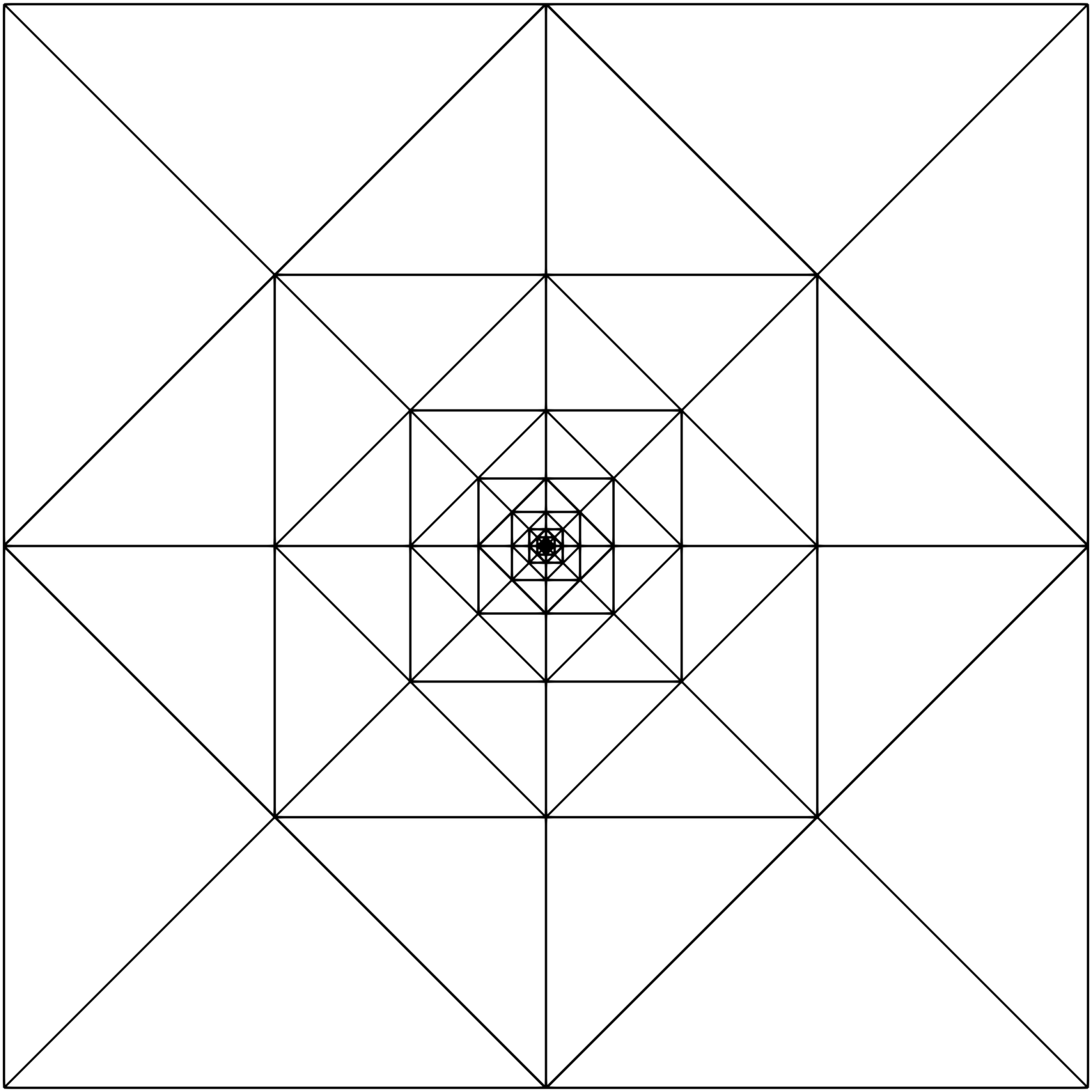}\\
(b)
\end{minipage}
\begin{minipage}[c]{0.35\linewidth}
\centering
$\alpha=1.0$
\includegraphics[trim={30cm 0 20cm 0},clip,width=4cm,height=3.7cm,scale=1.0]{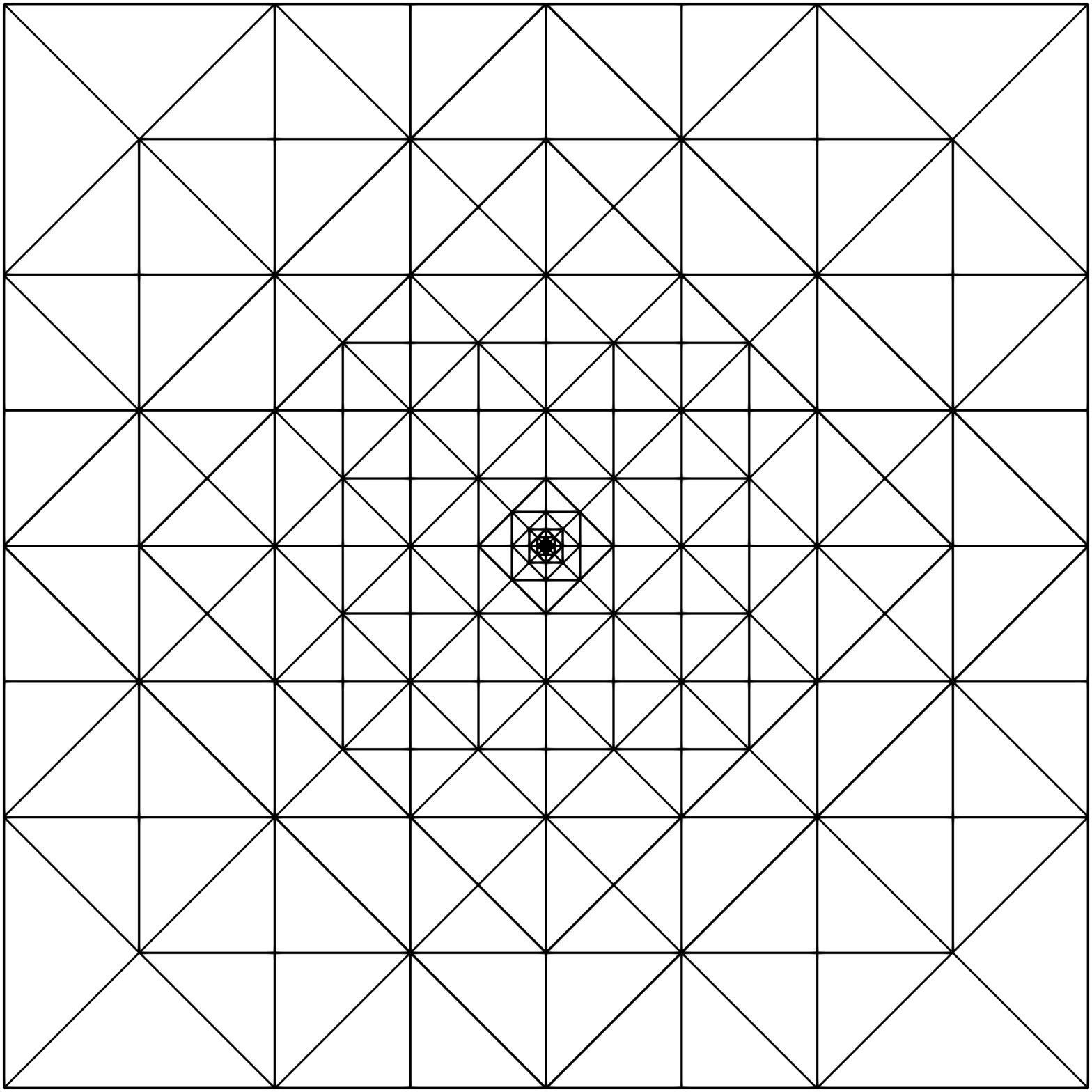}\\
(c)
\end{minipage}
\\
\centering
\begin{minipage}[c]{0.4\linewidth}
\centering
$\alpha=1.5~~~~~~$\\
\includegraphics[trim={30cm 0 20cm 0},clip,width=4cm,height=3.7cm,scale=1.0]{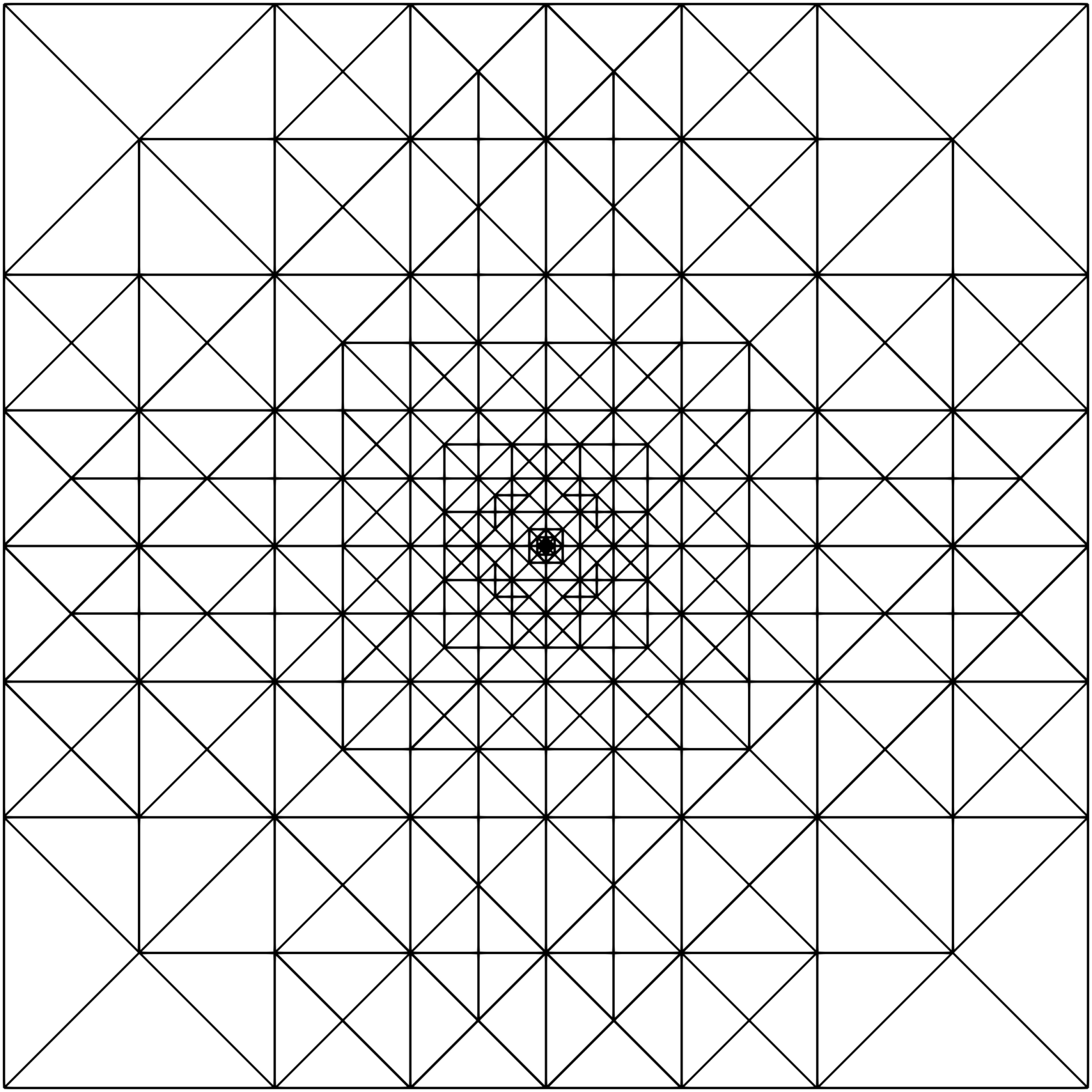}\\
$\textrm{(d)}~~~~~$
\end{minipage}
\begin{minipage}[c]{0.45\linewidth}
\centering
$\alpha=1.9~~~~~~$\\
\includegraphics[trim={30cm 0 20cm 0},clip,width=4cm,height=3.7cm,scale=1.0]{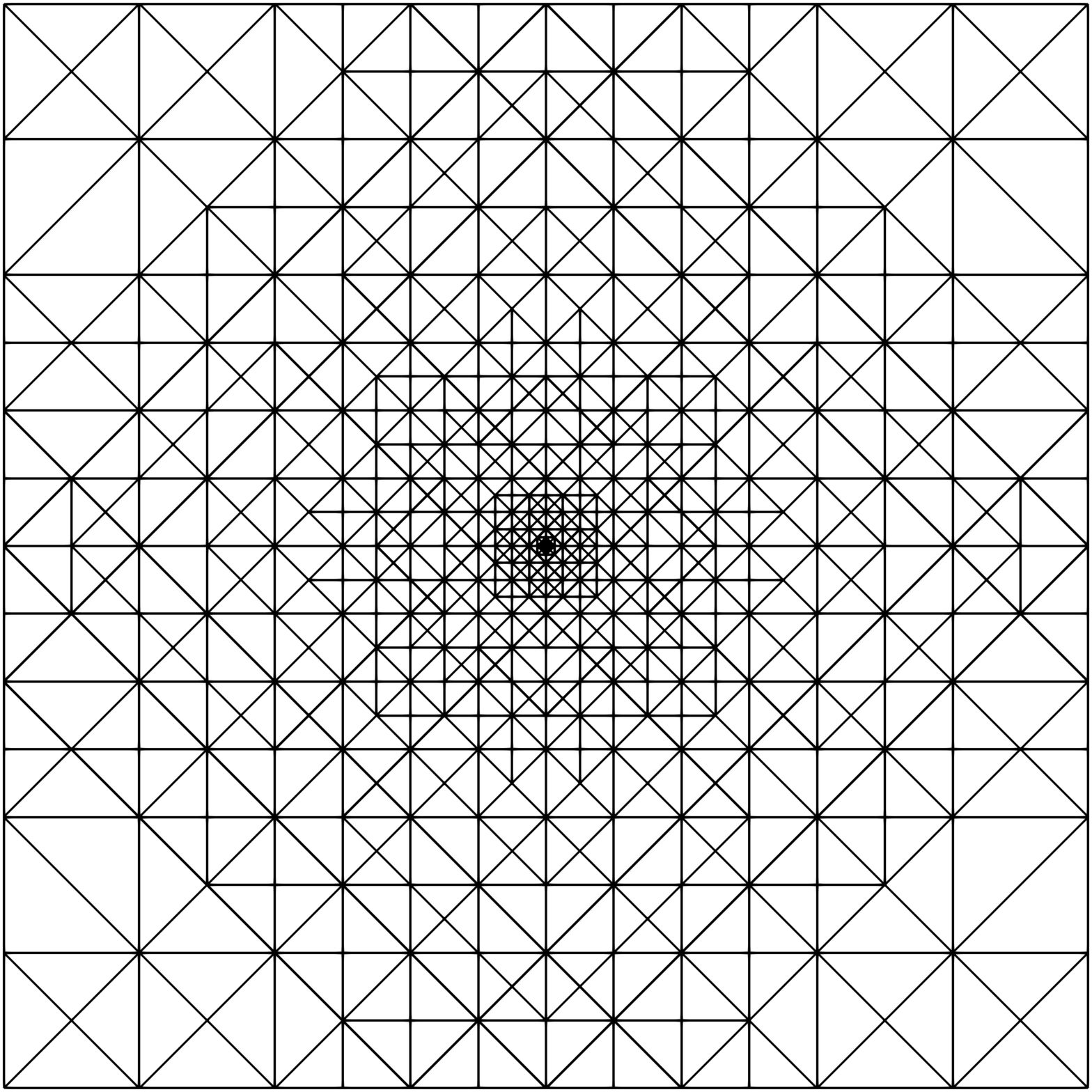}\\
(e)
\end{minipage}
\caption{Example 1: Meshes obtained after 30 iterations of our adaptive loop for 
%all the different values of the Muckenhoupt weight $\alpha\in\{0.1,0.5,1.0,1.5,1.9\}$, containing: 
(a) $\alpha = 0.1$ (232 elements and 121 vertices); (b) $\alpha = 0.5$ (232 elements and 121 vertices); (c) $\alpha = 1.0$ (392 elements and 209 vertices); (d) $\alpha = 1.5$ (592 elements and 309 vertices); and (e) $\alpha = 1.9$ (1056 elements and 553 vertices).}
\label{fig_2}
\end{figure}

\begin{figure}[ht]
\centering
\begin{minipage}[c]{0.45\linewidth}
\centering
$\mathcal{E}_h$\\
\psfrag{alpha=0.1}{$\alpha=0.1$}
\psfrag{alpha=0.5}{$\alpha=0.5$}
\psfrag{alpha=1.0}{$\alpha=1.0$}
\psfrag{alpha=1.5}{$\alpha=1.5$}
\psfrag{alpha=1.9}{$\alpha=1.9$}
\psfrag{rate(h2)}{$\mathsf{Ndof}^{-1}$}
\includegraphics[trim={0 0 0 0},clip,width=3.5cm,height=4cm,scale=0.6]{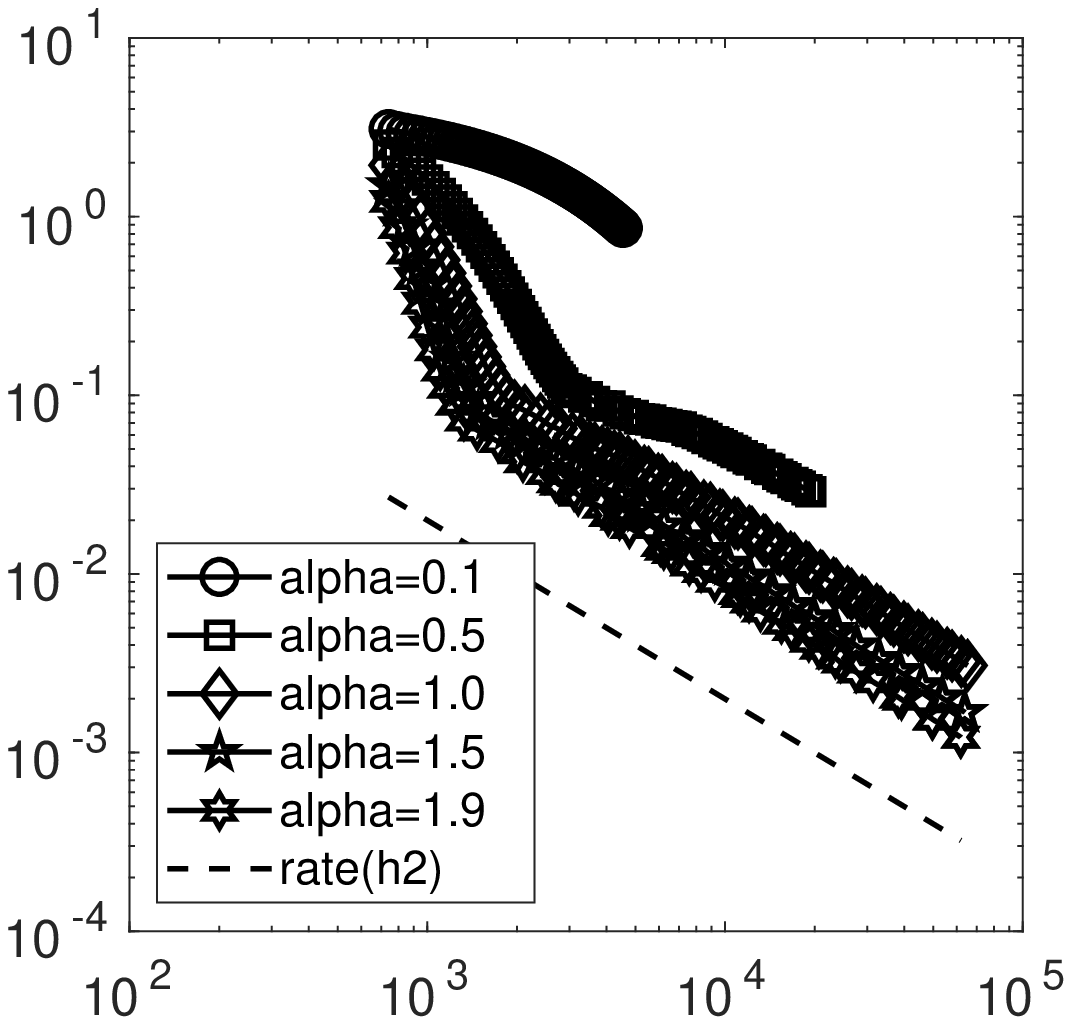}
\end{minipage}
\begin{minipage}[c]{0.45\linewidth}
\centering
\includegraphics[trim={30cm 0 10cm 0},clip,width=4.5cm,height=4cm,scale=1.0]{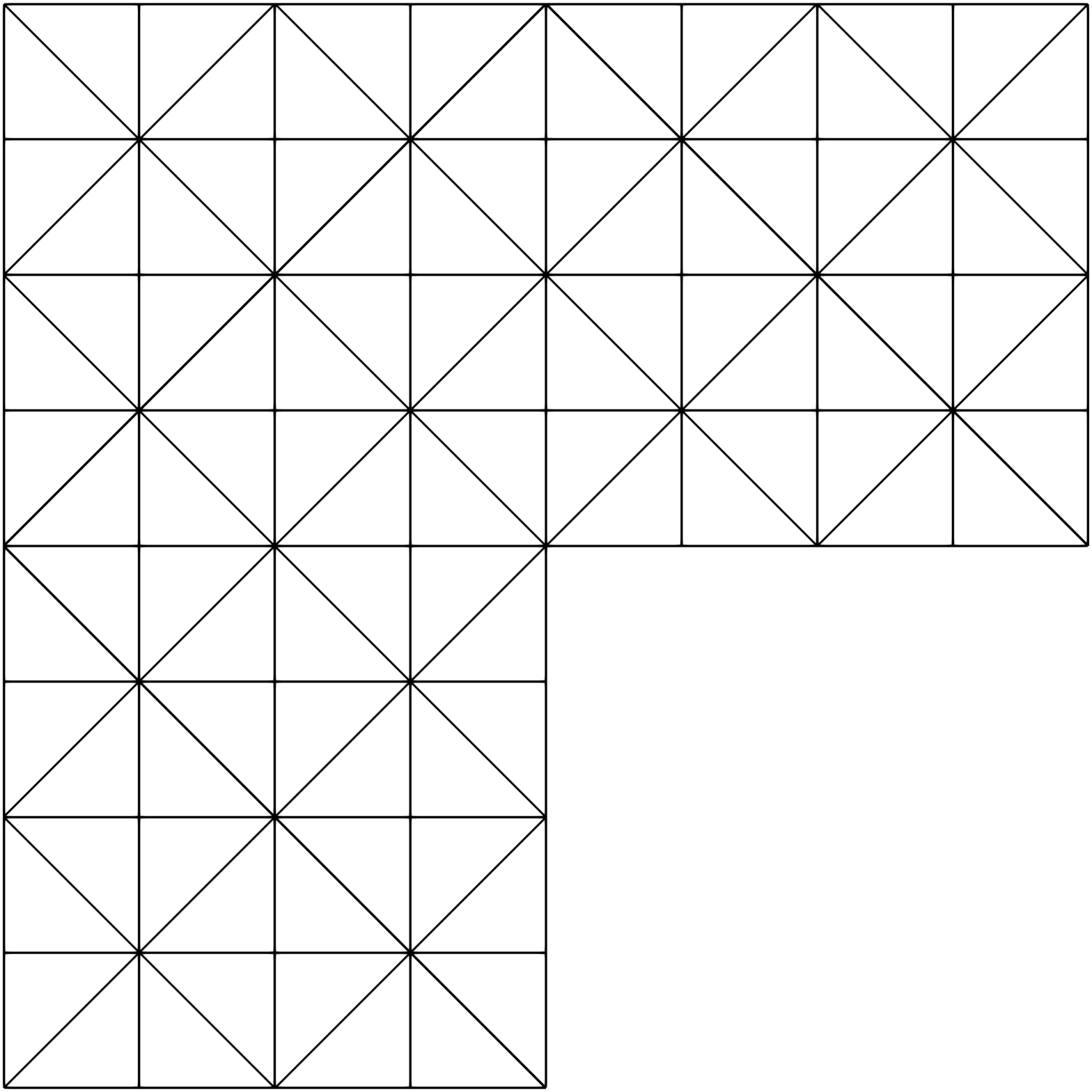}
\end{minipage}
\caption{Example 2: Experimental rates of convergence for the error estimator $\mathcal{E}_h$ considering $\alpha\in\{0.1,0.5,1.0,1.5,1.9\}$ (left) and the initial mesh used in the adaptive algorithm, which contains 96 elements and 65 vertices (right).}
\label{fig_3}
\end{figure}

\begin{figure}[ht]
\begin{minipage}[c]{0.32\linewidth}
\centering
$\alpha=0.1$
\includegraphics[trim={30cm 0 20cm 0},clip,width=4cm,height=3.7cm,scale=1.0]{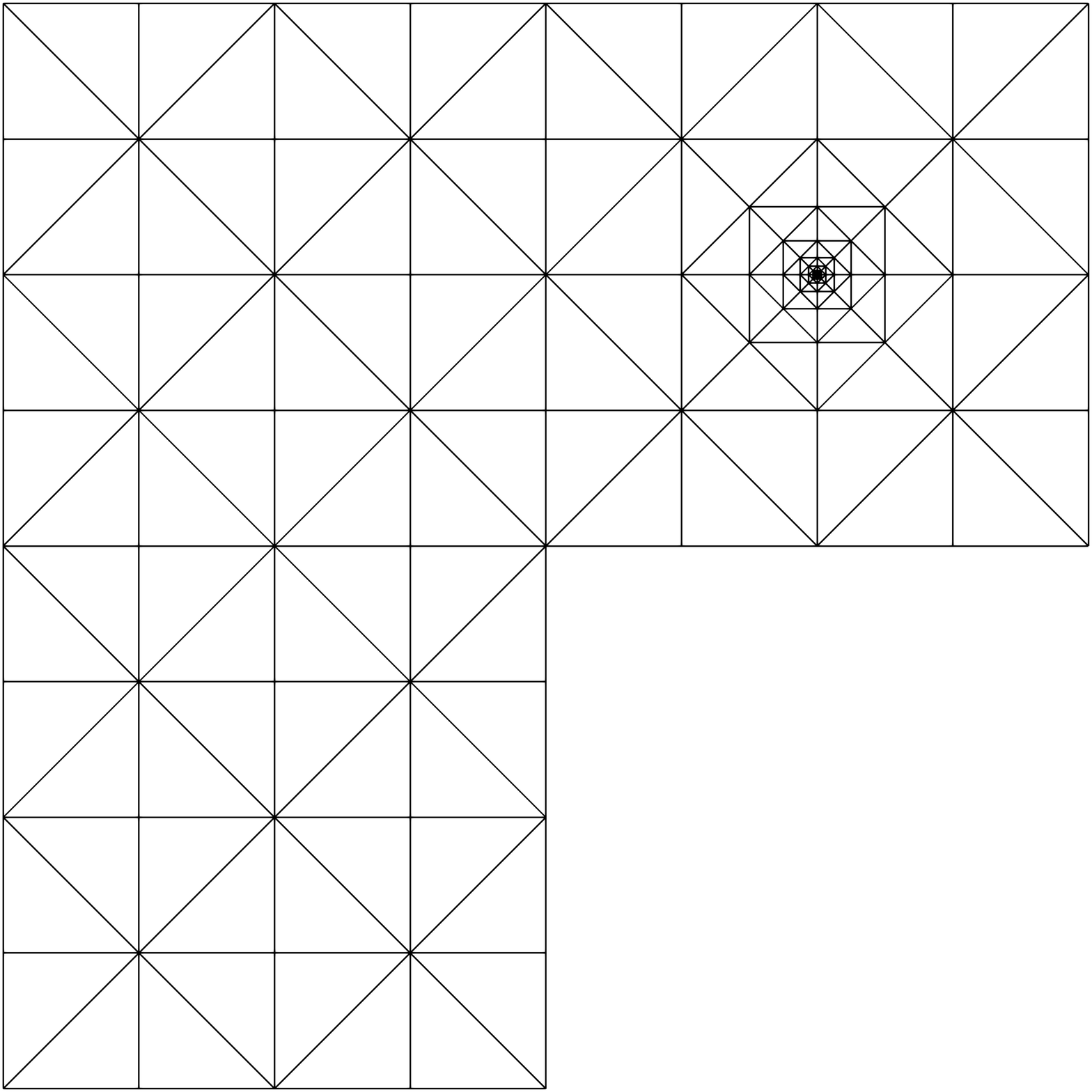}\\
(a)
\end{minipage}
\begin{minipage}[c]{0.32\linewidth}
\centering
$\alpha=0.5$
\includegraphics[trim={30cm 0 20cm 0},clip,width=4cm,height=3.7cm,scale=1.0]{lshape_1.eps}\\
(b)
\end{minipage}
\begin{minipage}[c]{0.35\linewidth}
\centering
$\alpha=1.0$
\includegraphics[trim={30cm 0 20cm 0},clip,width=4cm,height=3.7cm,scale=1.0]{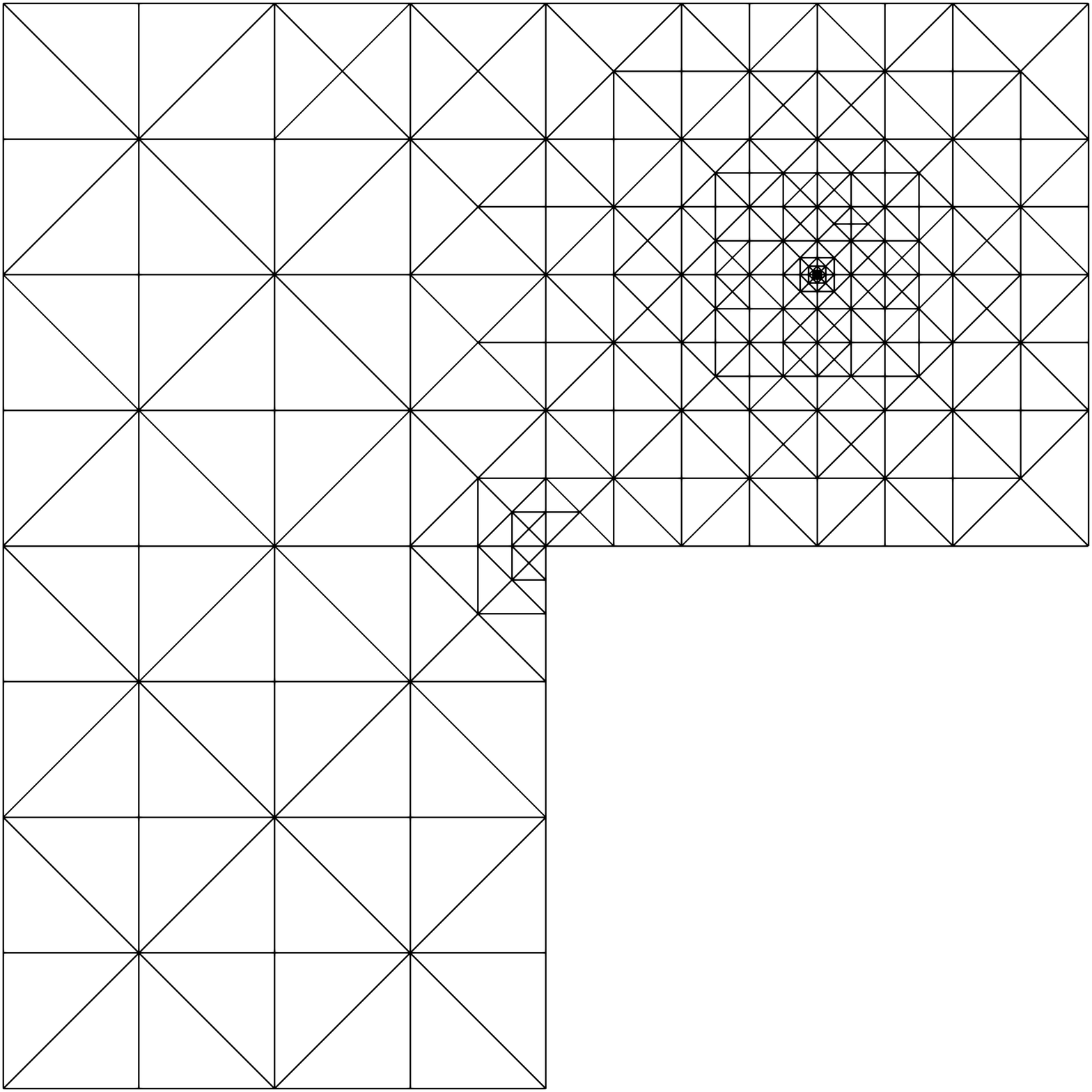}\\
(c)
\end{minipage}
\\
\centering
\begin{minipage}[c]{0.4\linewidth}
\centering
$\alpha=1.5~~~~~~$\\
\includegraphics[trim={30cm 0 20cm 0},clip,width=4cm,height=3.7cm,scale=1.0]{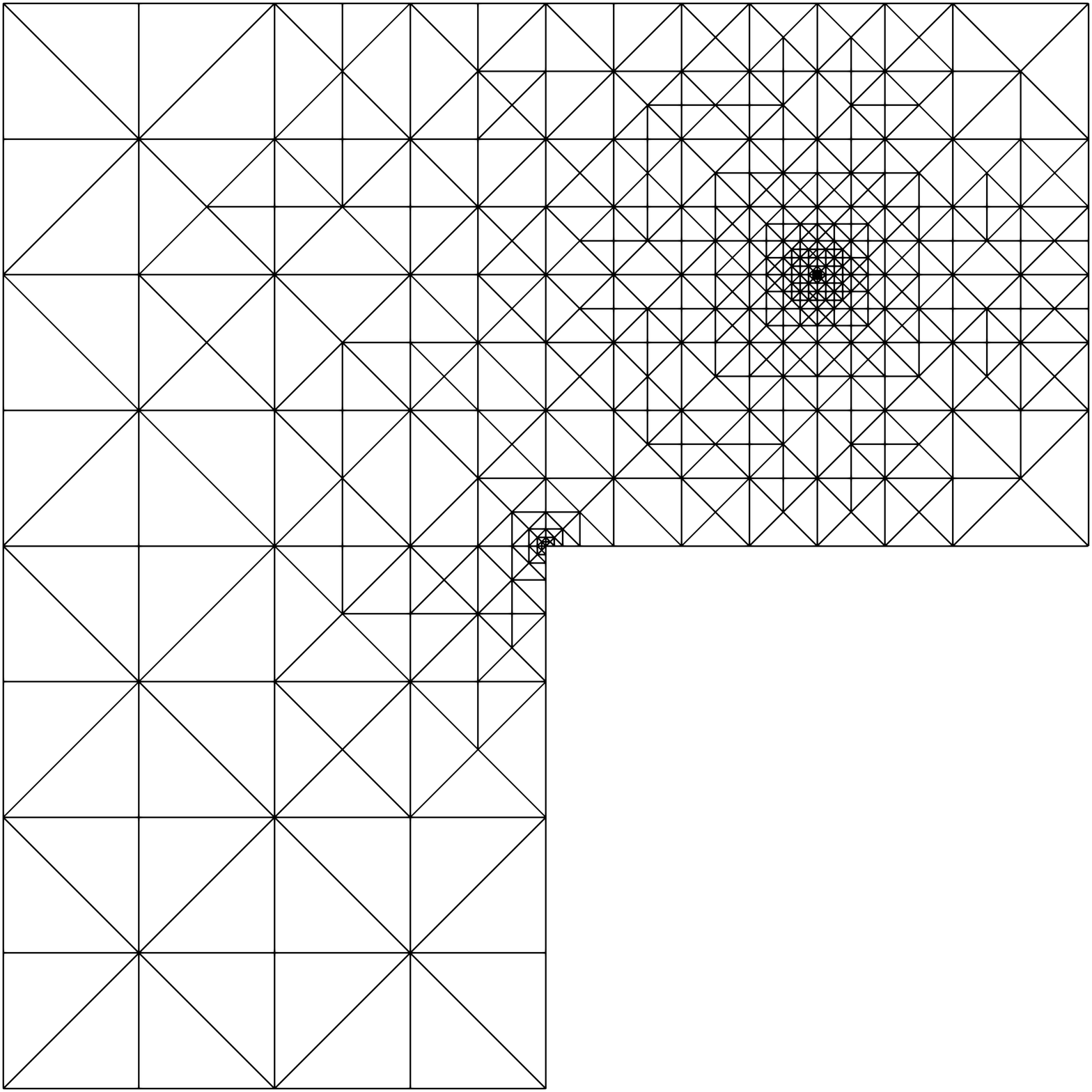}\\
$\textrm{(d)}~~~~~$
\end{minipage}
\begin{minipage}[c]{0.45\linewidth}
\centering
$\alpha=1.9~~~~~~$\\
\includegraphics[trim={30cm 0 20cm 0},clip,width=4cm,height=3.7cm,scale=1.0]{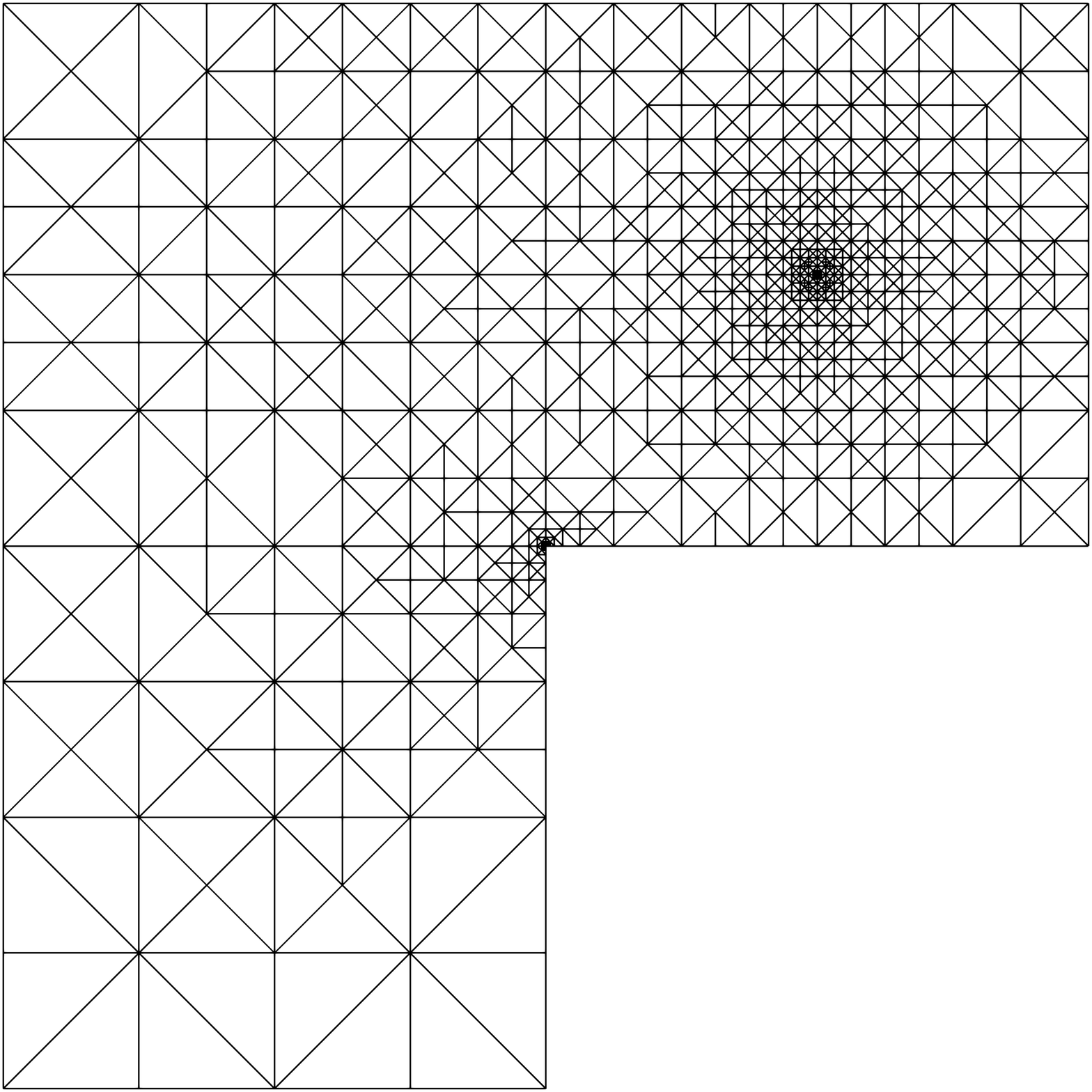}\\
(e)
\end{minipage}
\caption{Example 2: Meshes obtained after 30 iterations of our adaptive loop for (a) $\alpha =0.1$ (328 elements and 181 vertices);  (b) $\alpha = 0.5$ (328 elements and 181 vertices); (c) $\alpha=1.0$ (539 elements and 291 vertices); (d) $\alpha=1.5$ (847 elements and 450 vertices); and (e) $\alpha = 1.9$ (1380 elements and 728 vertices).}
\label{fig_4}
\end{figure}

\begin{figure}[ht]
\begin{minipage}[c]{0.3\linewidth}
\includegraphics[trim={0cm 0 0cm 0},clip,width=5cm,height=4cm,scale=1.0]{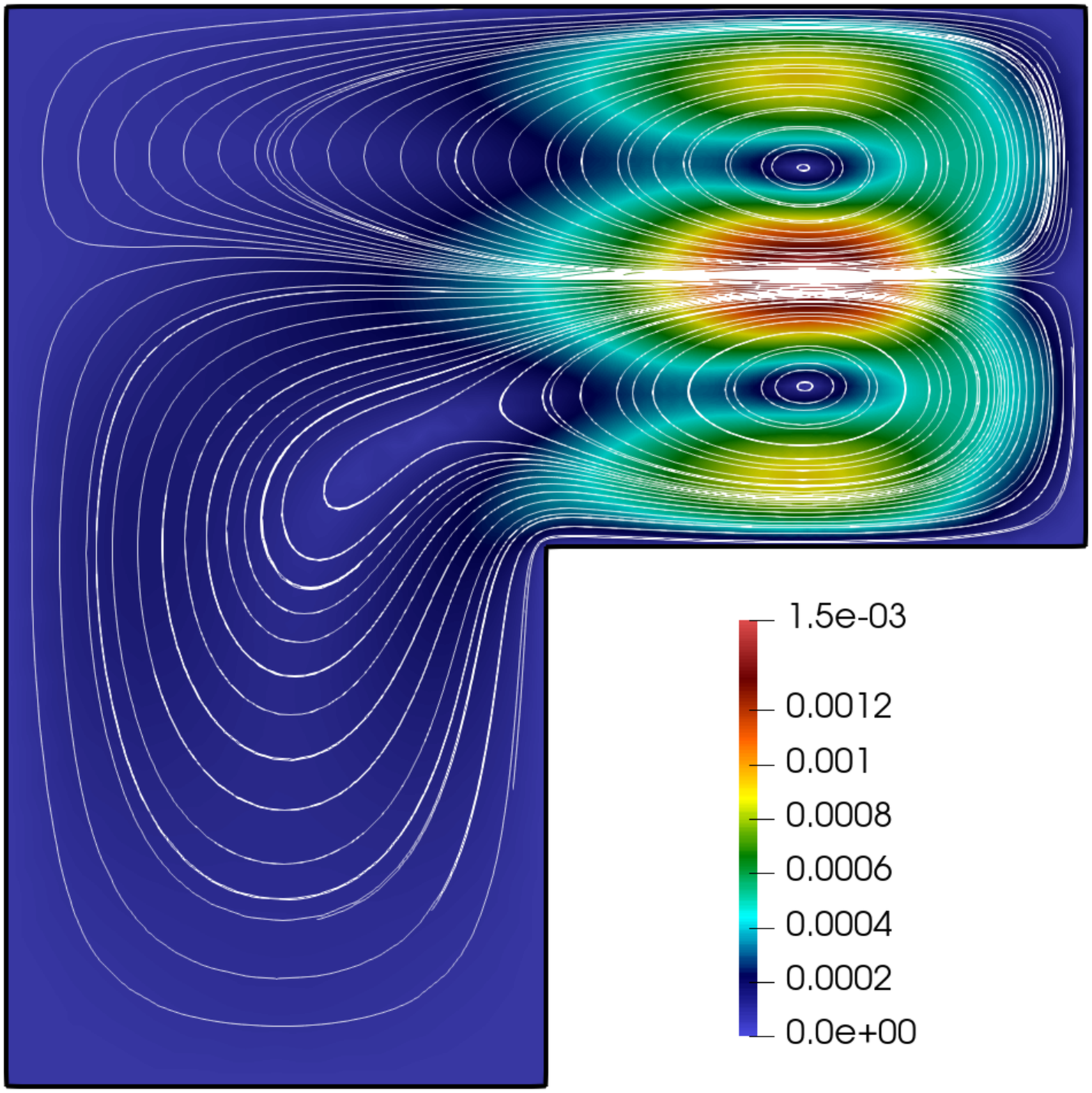}
\end{minipage}
\begin{minipage}[c]{0.3\linewidth}
\includegraphics[trim={0cm 0 0cm 0},clip,width=4.5cm,height=4cm,scale=1.0]{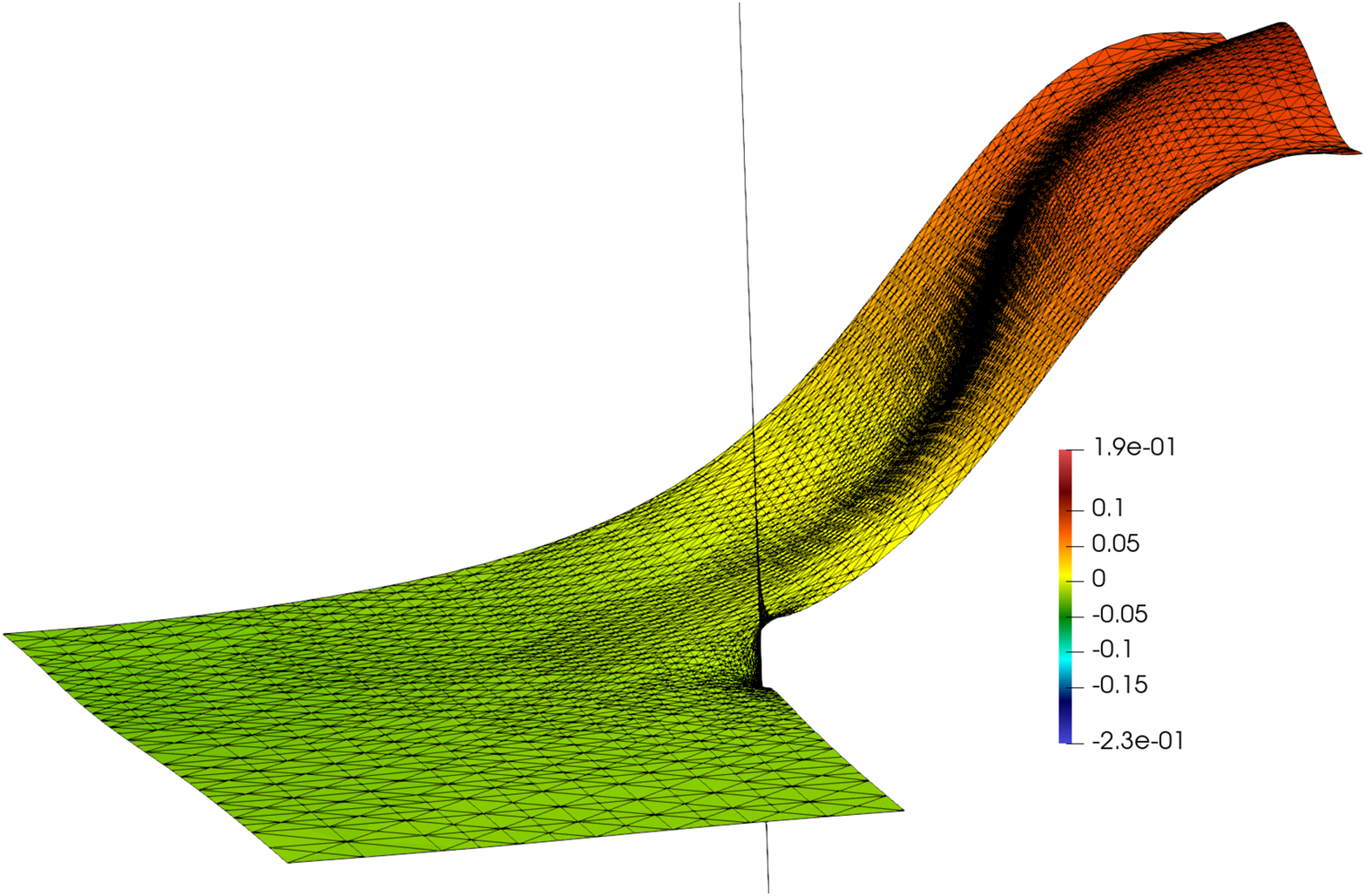}
\end{minipage}
\begin{minipage}[c]{0.34\linewidth}
\includegraphics[trim={0cm 0 0cm 0},clip,width=4.1cm,height=4cm,scale=1.0]{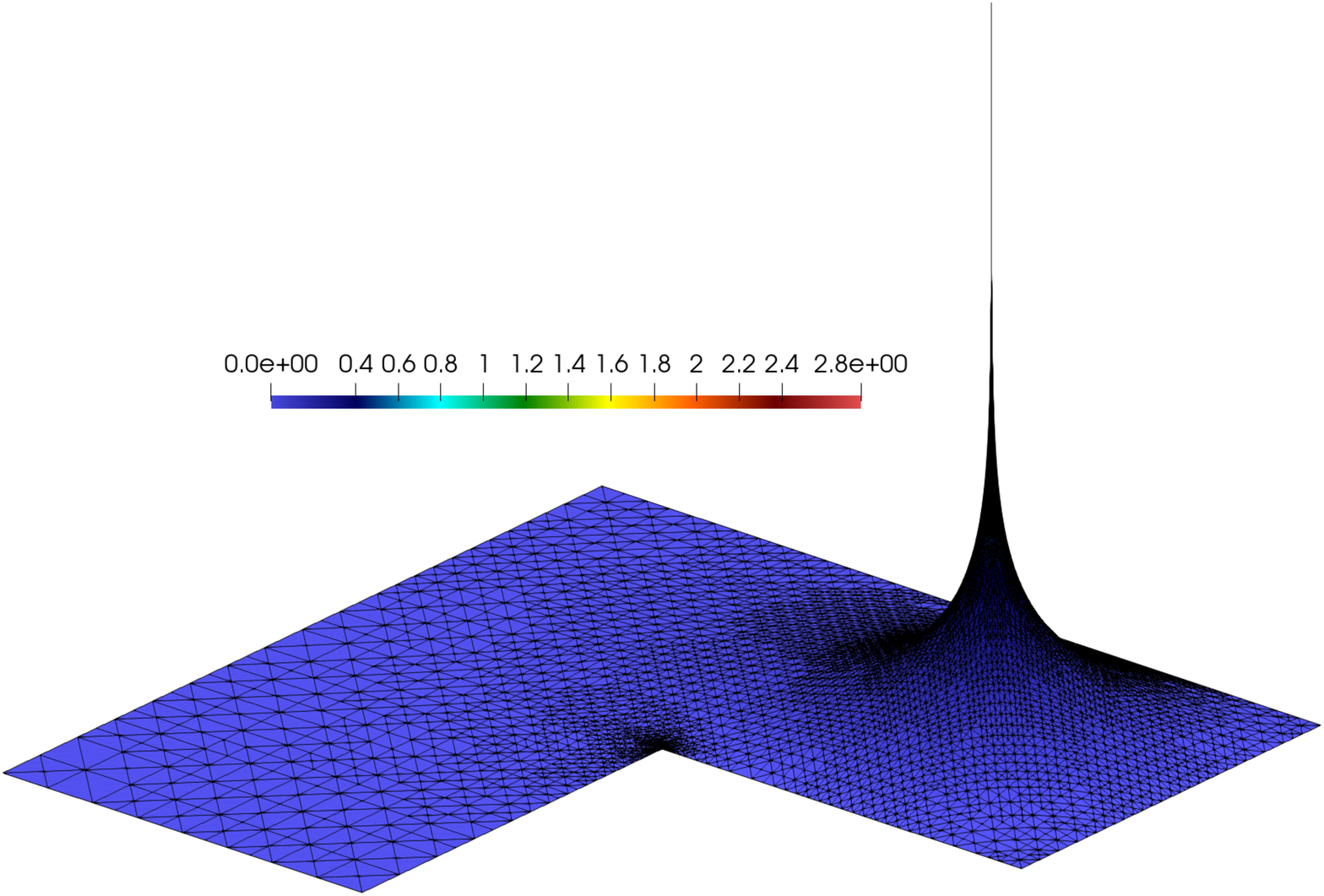}
\end{minipage}
\caption{Example 2: Finite element approximations of $|\ue_h|$, combined with its streamlines (left), pressure $\pe_h$ (center), and temperature $\Te_h$ (right) over a mesh containing 16105 elements and 8178 vertices obtained after 65 adaptive refinements ($\alpha=1.5$).}
\label{fig_5}
\end{figure}

\bibliographystyle{siamplain}
\bibliography{biblio}
\end{document}